\documentclass[10pt,twoside]{article}

\usepackage{geometry}
\usepackage{color}

\usepackage[numbers,sort&compress]{natbib}
\usepackage{graphicx,latexsym,euscript,makeidx,color,bm}
\usepackage{amsmath,amsfonts,amssymb,amsthm,thmtools,mathtools,mathrsfs,enumerate}
\usepackage[colorlinks,linkcolor=blue,anchorcolor=green,citecolor=red]{hyperref}
\usepackage[utf8]{inputenc}
\usepackage[T1]{fontenc}

\def\5n{\negthinspace \negthinspace \negthinspace \negthinspace \negthinspace }
\def\4n{\negthinspace \negthinspace \negthinspace \negthinspace }
\def\3n{\negthinspace \negthinspace \negthinspace }
\def\2n{\negthinspace \negthinspace }
\def\1n{\negthinspace }

\def\ms{\medskip}

\def\no{\noindent}        \def\q{\quad}

\def\leq{\leqslant}       \def\geq{\geqslant}
          \def\[{\Big[}
           \def\]{\Big]}

\theoremstyle{plain}

\theoremstyle{plain}

\makeatletter

\@addtoreset{equation}{section}
\makeatother

\newtheorem{theorem}{Theorem}[section]
\newtheorem{definition}[theorem]{Definition}
\newtheorem{proposition}[theorem]{Proposition}
\newtheorem{corollary}[theorem]{Corollary}
\newtheorem{lemma}[theorem]{Lemma}
\newtheorem{remark}[theorem]{Remark}
\newtheorem{example}[theorem]{Example}

\newtheorem{assumption}{Assumption}

\makeatletter

\@addtoreset{equation}{section}
\makeatother

\allowdisplaybreaks[4]

\begin{document}
	\title{
		\Large \bf Sharp propagation of chaos for mean-field backward stochastic differential equations\thanks{Ying Hu is partially supported by the Lebesgue Center of Mathematics ``Investissements d'avenir'' program (ANR-11-LABX-0020-01) and by the projects CAESARS (ANR-15-CE05-0024) and MFG (ANR-16-CE40-0015-01).
			Jiaqiang Wen is supported by NSFC (No. 12571478), the Guangdong Basic and Applied Basic Research Foundation (No. 2025B151502009), and the Shenzhen Fundamental Research General Program (No. JCYJ20230807093309021).
		}
	}
	\author{Shuxian Gao\thanks{
			Department of Mathematics,
			Southern University of Science and Technology, Shenzhen, 518055, China
			(Email: {\tt 12431007@mail.sustech.edu.cn}).}~,~~~~
	Ying Hu\thanks{Univ. Rennes, CNRS, IRMAR - UMR 6625, F-35000 Rennes, France
	(Email: {\tt ying.hu@univ-rennes.fr}).}~,~~~~
		Jiaqiang Wen\thanks{Department of Mathematics and SUSTech International Center for Mathematics,
			Southern University of Science and Technology, Shenzhen, 518055, China
			(Email: {\tt wenjq@sustech.edu.cn}).}
	}
	
	\date{}
	\maketitle
	
	\no\bf Abstract. \rm
We study propagation of chaos for decoupled mean-field forward-backward stochastic differential equations whose generators depend on the empirical laws of the forward states, backward values and diagonal martingale integrands. 
Under monotonicity and Lipschitz assumptions, synchronous coupling gives quantitative estimates, including an $m$-particle squared Wasserstein bound of order $m/n$ for a system of $n$ particles interacting
through  finitely many statistics.
In the Markovian setting, assuming a sufficiently regular classical
decoupling field, we obtain two sharp refinements. For constant invertible diffusion and first-order cancellation of the field's measure dependence along the limiting law flow, the squared Wasserstein error is of order $m^2/n^2$, on continuous-path space for the values and on $L^2$ for the diagonal integrands. 
Without imposing
this cancellation, smooth weak errors have order $n^{-1}$ for every fixed marginal, allowing variable and possibly degenerate diffusion.
The weak estimate is uniform on a fixed time interval for the values and integrated in time for the integrands. 
The argument compares the interacting BSDE with an empirical evaluation of the decoupling field, retaining the full martingale representation and controlling feedback through both backward laws.
It yields a joint-path Wasserstein transfer bound with intrinsic squared error $m/n^2$,
off-diagonal integrand estimates, and a weak-error transfer principle with additive error $n^{-1}$.
Explicit models with feedback through both backward laws verify the cancellation assumptions.
Examples distinguish the intrinsic backward error from the forward law error and establish matching lower bounds for each backward component.
	
	\ms
	
	\no\bf Key words:  \rm 
	propagation of chaos; mean-field backward stochastic differential equations; interacting particles; Wasserstein distance; weak approximation; master equation.
	
	\rm

	\ms
	

\noindent\textbf{2020 Mathematics Subject Classification:} 
\rm 
Primary 60H10;  Secondary 60H30, 60F17.

	\section{Introduction}\label{sec1}
	Propagation of chaos describes the asymptotic independence of a fixed	number of particles in a large interacting system. For backward stochastic 	differential equations, its quantitative form must account for both the 	value processes and the martingale integrands. 
	A backward particle is
	adapted to the filtration of the entire system, so its martingale
	representation involves every particle noise. When the generator depends
	on the empirical law of the diagonal integrands, errors in these
	integrands also enter the interaction.
	
	We study how sharp estimates for forward particle laws can be transferred 	to this backward system. The main distinction is between path-space 	Wasserstein errors and errors tested against smooth functions. 
	The first
	requires control of empirical fluctuations inside the decoupling field;
	the second can exploit their cancellation in expectation. We give a 	sufficient first-order cancellation condition for the sharp Wasserstein 	rate and obtain the sharp smooth weak rate without that condition.
	Both arguments use a comparison that separates the BSDE consistency 	error from the forward approximation error.
	
	\subsection{The particle system and its limit}\label{sec1.1}
	Fix \(T>0\) and integers \(d,k\geq1\). Let \(X_0\) be an \(\mathbb R^d\)-valued random variable and \(W\) an independent \(d\)-dimensional Brownian motion. On a complete probability space \((\Omega,\mathscr F,\mathbb P)\), let \(\mathbb F=(\mathscr F_t)_{0\leq t\leq T}\) be the usual augmentation of the filtration generated by \(X_0\) and \(W\). We consider jointly measurable coefficients
	\begin{align*}
		b&:[0,T]\times\Omega\times\mathbb R^d\times\mathcal P(\mathbb R^d)\longmapsto \mathbb R^d,\\
		\sigma&:[0,T]\times\Omega\times\mathbb R^d\times\mathcal P(\mathbb R^d)\longmapsto\mathbb R^{d\times d},\\
		f&:[0,T]\times\Omega\times\mathbb R^d\times\mathcal P(\mathbb R^d)
		\times\mathbb R^k\times\mathcal P(\mathbb R^k)
		\times\mathbb R^{k\times d}\times\mathcal P(\mathbb R^{k\times d})
	\longmapsto\mathbb R^k,\\
		h&:\Omega\times\mathbb R^d\times\mathcal P(\mathbb R^d)\longmapsto\mathbb R^k.
	\end{align*}
	The coefficients \(b,\sigma,f\) are measurable with respect to the progressive sigma-field in \(t,\omega\) and the Borel sigma-fields in their remaining variables; \(h\) is measurable with respect to the corresponding product of \(\mathscr F_T\) and Borel sigma-fields. Their quantitative assumptions are stated in \autoref{lipp}.
	All measure arguments are understood in the finite-moment spaces specified in the relevant assumptions; in \autoref{sec4} and \autoref{sec5}, these are the corresponding spaces \(\mathcal{P}_2\).
	
	For each \(n\), take \(n\) independent copies of this probability space, with the usual augmentation of the product filtration \(\mathbb F^n\). Write \(\omega=(\omega_1,\ldots,\omega_n)\) and define the coordinate copies by
	\begin{align*}
		b^i(t,\omega,x,\theta)&=b(t,\omega_i,x,\theta),\qquad
		h^i(\omega,x,\theta)=h(\omega_i,x,\theta),
	\end{align*}
	with the same convention for \(\sigma^i\) and \(f^i\). Thus any coefficient randomness is idiosyncratic and is copied together with \((X_0,W)\). The interacting system is
	\begin{align}
		X_t^{i,n}
		&=X_0^{i,n}+\int_0^t b^i(s,X_s^{i,n},\theta_s^n) ds
		+\int_0^t\sigma^i(s,X_s^{i,n},\theta_s^n) dW_s^i,
		\label{x1}\\
		Y_t^{i,n}
		&=h^i(X_T^{i,n},\theta_T^n)
		+\int_t^T f^i(s,X_s^{i,n},\theta_s^n,Y_s^{i,n},\mu_s^n,
		Z_s^{i,i,n},\nu_s^n) ds
		-\sum_{j=1}^n\int_t^T Z_s^{i,j,n} dW_s^j,
		\label{y1}
	\end{align}
	for \(1\leq i\leq n\), where
	\begin{align*}
		\theta_s^n=\frac1n\sum_{i=1}^n\delta_{X_s^{i,n}},\qquad
		\mu_s^n=\frac1n\sum_{i=1}^n\delta_{Y_s^{i,n}},\qquad
		\nu_s^n=\frac1n\sum_{i=1}^n\delta_{Z_s^{i,i,n}}.
	\end{align*}
Here \(\delta_x\) denotes the Dirac mass at \(x\), and \(Z^{i,j,n}\) takes values in \(\mathbb R^{k\times d}\). The forward system is autonomous; the backward equation depends on its state and empirical measure. The diagonal dependence in the generator is part of the model. The martingale representation retains the entire row \((Z^{i,j,n})_{j=1}^n\); independence of the Brownian motions does not force the off-diagonal integrands to vanish. We suppress the coordinate superscript on the coefficients and the population superscript \(n\) when no ambiguity arises. In particular, a random coefficient evaluated in particle \(i\)'s equation always means its \(i\)th coordinate copy.
	
	The limiting system is
	\begin{align}
		X_t&=X_0+\int_0^t b(s,X_s,\mathbb P_{X_s}) ds
		+\int_0^t\sigma(s,X_s,\mathbb P_{X_s}) dW_s,
		\label{x1n}\\
		Y_t&=h(X_T,\mathbb P_{X_T})
		+\int_t^T f(s,X_s,\mathbb P_{X_s},Y_s,\mathbb P_{Y_s},Z_s,\mathbb P_{Z_s}) ds
		-\int_t^T Z_s dW_s.
		\label{y1n}
	\end{align}
	We write \(\mathbb P_V\) for the law of a random variable \(V\). All laws are unconditional. Fixed-time laws of martingale integrands are understood for Lebesgue-almost every time. For path-space statements, \(X\) and \(Y\) are random continuous paths and \(Z\) is a random element of \(L^2([0,T];\mathbb R^{k\times d})\).
	
	\subsection{Main results and proof strategy}\label{sec:overview}
	For \(1\leq m\leq n\), set
	\begin{align*}
		\mathbf X^{m,n}&=(X^{1,n},\ldots,X^{m,n}),\qquad
		\mathbf Y^{m,n}=(Y^{1,n},\ldots,Y^{m,n}),\q\ 
		\mathbf Z^{m,n}_{\mathrm{diag}}=(Z^{1,1,n},\ldots,Z^{m,m,n}).
	\end{align*}
	The path-space errors below use the supremum norm for \(X,Y\) and the \(L^2\)-norm for \(Z\), with Euclidean norms across particle coordinates.
	
	\paragraph{Coupling estimates.}
	Under  \autoref{lipp}, \autoref{sec3} establishes well-posedness 	and synchronous-coupling estimates. For \(1<p<2\), \(q=2\), and diffusion
	independent of the measure argument,  \autoref{coup} bounds the sum 	of the \(p\)th-power Wasserstein errors for \(\mathbf X^{m,n}\),
	\(\mathbf Y^{m,n}\) and \(\mathbf Z^{m,n}_{\mathrm{diag}}\) by
	\(Cm\bar\lambda_{p,r,n}\):
		\begin{align*}
		&\mathcal W_{p,\|\cdot\|_\infty}^p
		(\mathbb P_{\mathbf X^{m,n}},\mathbb P_X^{\otimes m})
		+\mathcal W_{p,\|\cdot\|_\infty}^p
		(\mathbb P_{\mathbf Y^{m,n}},\mathbb P_Y^{\otimes m})
		+\mathcal W_{p,L^2}^p
		(\mathbb P_{\mathbf Z^{m,n}_{\mathrm{diag}}},\mathbb P_Z^{\otimes m})
		\leq Cm\bar\lambda_{p,r,n}.
	\end{align*}
	Here \(\bar\lambda_{p,r,n}\) combines the
	empirical Wasserstein rates in dimensions \(d,k,kd\); the admissible 	moment parameter \(r\) is specified in  \autoref{xyzcon}.
	For interactions through finitely many Lipschitz statistics
	(\autoref{nodimen}),   \autoref{nodi} bounds the sum of squared Wasserstein errors by \(Cm/n\), allowing measure dependence in the diffusion.
	These structured estimates are dimension-independent when the statistical 	dimensions, coefficient bounds and moment bounds are uniformly controlled.
	
	\paragraph{Sharp path-space estimates.}
	In the deterministic Markovian setting, suppose that the diffusion matrix is constant and invertible and that the master equation admits a classical decoupling field \(U\) with the regularity in \autoref{asp4.1}. We impose \autoref{asp4.2},
	including a closed evolution for the 
	   finite-dimensional statistics.
	  In particular,
	\begin{align*}
		U(t,x,\mu)=G(t,x,\Phi_t(\mu)),\q\
		D_aG(t,x,\Phi_t(\theta_t))=0,\q\  \theta_t=\mathbb P_{X_t},\q\
		\Phi_t(\mu)=\int_{\mathbb{R}^d}\phi_U(t,v)\mu (dv).
	\end{align*}
	Together with the forward assumptions in \autoref{asp4.3}, these conditions yield, by \autoref{sharpuperr},
	\begin{align*}
		&\mathcal W_{2,\|\cdot\|_\infty}^2
		(\mathbb P_{\mathbf Y^{m,n}},\mathbb P_Y^{\otimes m})
		+\mathcal W_{2,L^2}^2
		(\mathbb P_{\mathbf Z^{m,n}_{\mathrm{diag}}},\mathbb P_Z^{\otimes m})
		\leq C\frac{m^2}{n^2},\q\ 1\leq m\leq n.
	\end{align*}
	The constant is independent of \(m,n\); unlike the preceding structured coupling estimate, no uniformity in \(d,k\) is asserted here.   \autoref{lowerfory} and \autoref{lowerforz} give matching lower bounds for the value and diagonal-integrand components, respectively.
	
The same theorem controls the joint law of \((X,Y,Z)\).
More generally, it bounds the joint squared error by the forward path-law error plus \(Cm/n^2\),
before invoking the particular sharp forward estimate.
It also bounds the expected squared \(L^2\)-norm of each off-diagonal martingale row by \(C/n^2\).
The same estimate quantifies departures from exact cancellation:
if
\begin{align*}
	\sup\limits_{t\in [0,T],x\in \mathbb{R}^d}\Big( 
 |D_aG(t,x,A_t)|+|D_xD_aG(t,x,A_t)|\Big) \leq\varepsilon,
\end{align*} 
 with the remaining assumptions unchanged,
the joint squared error is bounded by
\begin{align*}
 C\Big(F_{m,n}+\frac{m}{n^2}+\frac{\varepsilon^2m}{n}\Big).
\end{align*} 
Here \(F_{m,n}\) denotes the squared path-space Wasserstein error
of the first \(m\) forward particles relative to
\((\mathbb P_X)^{\otimes m}\).
\autoref{exasp4.2} shows that the intrinsic \(m/n^2\) backward term can be attained even with independent forward particles.

		The cancellation condition addresses the additional empirical fluctuation in backward observables. Even when the forward particles are independent, their empirical measure can enter the terminal condition and create fluctuations of order \(1/ \sqrt{n}\) in each backward value; \autoref{ex} gives an explicit example. Smoothness alone therefore does not imply the sharper Wasserstein rate. Our condition removes the first-order empirical fluctuation along the limiting measure flow. It is a sufficient structural condition for the theorem, rather than a characterization of every system with the sharp rate.
	
	\paragraph{Sharp weak estimates.}
	For smooth observables, first-order degeneracy is unnecessary. In \autoref{sec5}, the deterministic diffusion \(\sigma(t,x,\mu)\) may depend on time, state and law, and may be degenerate. Under the stated master-equation regularity and either set of forward weak-error assumptions, \autoref{5maintotal} proves, for every fixed \(m\),
	\begin{align*}
		&\sup_{t\in[0,T]}d_{3,1}^{Y,m}
		\big(\mathbb P_{(Y_t^{1,n},\ldots,Y_t^{m,n})},\mathbb P_{Y_t}^{\otimes m}\big)
		+\int_0^T d_{3,1}^{Z,m}
		\big(\mathbb P_{(Z_t^{1,1,n},\ldots,Z_t^{m,m,n})},\mathbb P_{Z_t}^{\otimes m}\big)dt
		\leq \frac{C_m}{n}.
	\end{align*}
	The metrics are generated by the unit balls of the \(C_b^{3,1}\) test-function spaces defined in \autoref{sec5}. This is a statement about fixed-time marginals, uniform over \([0,T]\) for \(Y\) and integrated over time for \(Z\). It does not assert a weak estimate for arbitrary path functionals or a pointwise-in-time estimate for the genuine \(Z\)-particles. The constant \(C_m\) may depend on \(m\). The example in \autoref{nondege} attains weak order \(1/n\) for both components while exhibiting Wasserstein  errors of order \(1/\sqrt{n}\).
	
	\paragraph{The intermediate-particle comparison.}
	The two refined results share the empirical evaluation
	\begin{align*}
		\widetilde Y_t^i=U(t,X_t^{i,n},\theta_t^n).
	\end{align*}
	Writing \(\sigma_t^j=\sigma(t,X_t^{j,n},\theta_t^n)\), its martingale integrands are
	\begin{align*}
		\widetilde Z_t^{i,j}
		=\mathbf1_{\{i=j\}}D_xU(t,X_t^{i,n},\theta_t^n)\sigma_t^i
		+\frac1n D_\mu U(t,X_t^{i,n},\theta_t^n)(X_t^{j,n})\sigma_t^j.
	\end{align*}
	Under the hypotheses of the corresponding theorem, the empirical-measure It\^o formula produces a consistency remainder of order \(n^{-1}\) in \(L^2(dt\otimes d\mathbb P)\). 
BSDE stability controls the genuine and intermediate particles
on the same probability space and, consequently, yields squared 
Wasserstein errors of order \(m/n^2\),
see  \autoref{empirical} and \autoref{empirical2}. The measure-derivative term in \(\widetilde Z\) is essential for this comparison.

For the Wasserstein theorem, we next replace \(\theta_t^n\) by \(\theta_t\) in the decoupling field. First-order cancellation and a fourth-moment estimate for the empirical statistics give another squared error of order \(m/n^2\). A Lipschitz push-forward of the sharp forward estimate from \cite{man_sha_26} completes the argument. For the weak theorem, \autoref{abstract-transfer} separates the backward consistency estimate from the forward weak-error input. We apply the latter to the functional obtained by composing the observable with the decoupling field. Exchangeability and a correction for the discrepancy between
sampling with and without replacement yield the fixed-\(m\) result. The forward input can be supplied by \cite{chassagen_22_aap} or by the smooth regime of \cite{frikhaandsong_26}.

The refined theorems are conditional on the existence of a classical decoupling field with the stated mixed regularity.
To demonstrate that the cancellation condition is compatible with backward distributional feedback, \autoref{exasp4.2} constructs an explicit family whose generators depend on both backward laws and
whose assumptions can be verified directly.
The constructions include feedback calibrated to the master-equation graph and ordinary linear feedback through the two backward means.
For the latter, an explicit finite-particle solution verifies that both feedback terms are active.
The lower-bound examples serve a separate purpose: they establish optimality of the stated rates for the value processes and diagonal integrands.

\paragraph{Organization of the paper.}
	\autoref{sec2} places these results in the literature. \autoref{sec3} proves well-posedness and the coupling estimates.   \autoref{sec4} and \autoref{sec5} establish the sharp Wasserstein and weak estimates, respectively.   \autoref{proofoflemma} proves the time-dependent semigroup regularity lemma.
	
	\subsection{Notation and measure derivatives}\label{section1.2}
	Euclidean spaces carry their usual inner products and norms; matrices carry the Frobenius norm. For a Polish metric space \((E,d_E)\), let \(\mathcal P(E)\) be its Borel probability measures and \(\mathcal P_p(E)\) those with finite \(p\)th moment, \(p\geq1\). For \(\mu,\nu\in\mathcal P_p(E)\), write \(\Pi(\mu,\nu)\) for their couplings and set
	\begin{align*}
		\mathcal W_{p,d_E}(\mu,\nu)
		=\inf_{\pi\in\Pi(\mu,\nu)}
		\left(\int_{E\times E}d_E(x,y)^p\,\pi(dx,dy)\right)^{1/p}.
	\end{align*}
	We omit \(d_E\) for Euclidean spaces. For vector-valued continuous paths and square-integrable functions, respectively, the norms are
	\begin{align*}
		\|x\|_\infty=\sup_{0\leq t\leq T}|x_t|\q\ \hbox{and} \q\ 
		\|z\|_{L^2}=\left(\int_0^T|z_t|^2\,dt\right)^{1/2}.
	\end{align*}
	On \(m\)-particle path spaces the value \(|x_t|\) is the Euclidean norm of the full vector; the analogous convention applies to \(z_t\). A law on \(E^m\) is exchangeable if it is invariant under coordinate permutations.
	
	For the forward transport assumption in  \autoref{sec4}, we use
\begin{align*}
	H(\mu\mid\nu)=
	\begin{cases}
		\displaystyle\int_E\log\frac{d\mu}{d\nu}\,d\mu,&\mu\ll\nu,\\
		+\infty,&\text{otherwise}.
	\end{cases}
\end{align*}
A measure \(\nu\in\mathcal P_1(E)\) satisfies \(T_1(C)\) if
\(\mathcal W_{1,d_E}^2(\mu,\nu)\leq2C H(\mu\mid\nu)\) for every
\(\mu\in\mathcal P_1(E)\); see \cite{vill_op_09}.

	For a finite-dimensional Euclidean space \(\mathbb H\), \(L^p_{\mathscr F_T}(\Omega;\mathbb H)\) denotes the \(\mathscr F_T\)-measurable random variables with finite \(p\)th moment. We use \(S^p_{\mathbb F}([t,T];\mathbb H)\) for continuous adapted processes and \(L^p_{\mathbb F}([t,T];\mathbb H)\) for progressively measurable processes with finite respective norms
	\begin{align*}
		\|V\|_{S^p_{\mathbb F}}
		&=\left(\mathbb E \left[\sup_{t\leq s\leq T}|V_s|^p \right]\right)^{1/p}\q\ 
		\hbox{and} \q\ 
		\|V\|_{L^p_{\mathbb F}}
		=\left[\mathbb E\left(\int_t^T|V_s|^2\,ds\right)^{p/2}\right]^{1/p}.
	\end{align*}
	Thus \(L^p_{\mathbb F}\) is the usual BSDE energy space, not the space with norm \((\mathbb E\left[\int|V_s|^p ds \right])^{1/p}\). The same notation applies to the product filtration. Constants denoted by \(C\) may change from line to line; their relevant dependencies are specified in each result.
	
	We use linear functional and Lions derivatives as follows; see \cite{carmona18_1,chassagen_22_aap}. Vector- and matrix-valued derivatives are understood componentwise.
	\begin{definition}[First-order measure derivatives]\label{def2,1}\sl 
		A continuous map \(V:\mathcal P_2(\mathbb R^d)\to\mathbb R\) has a linear functional derivative if there is a jointly continuous map \(\frac{\partial V}{\partial m}(\mu,v)\), of at most quadratic growth in \(v\) locally uniformly in \(\mu\), such that
		\begin{align*}
			V(\mu')-V(\mu)
			=\int_0^1\int_{\mathbb R^d}
			\frac{\partial V}{\partial m}((1-\lambda)\mu+\lambda\mu',v)
			\,(\mu'-\mu)(dv)\,d\lambda.
		\end{align*}
		We choose the normalization \(\frac{\partial V}{\partial m}(\mu,0)=0\).
		To define the Lions derivative, work on an auxiliary atomless
	probability space rich enough to realize every law in
	\(\mathcal P_2(\mathbb R^d)\), and lift \(V\) to
	\begin{align*}
		\widetilde V:L^2(\Omega;\mathbb R^d)\longrightarrow\mathbb R,
		\q\  \widetilde V(\xi)=V(\mathbb P_\xi).
	\end{align*}
	The map \(V\) is Lions differentiable at \(\mu\) if its lift is
	Fr\'echet differentiable at a random variable \(\xi\) with law \(\mu\).
	Its derivative is represented by a map \(D_\mu V(\mu,\cdot)\),
	defined \(\mu\)-almost everywhere, such that
	\begin{align*}
		D\widetilde V(\xi)[\eta]
		=\mathbb E\left[D_\mu V(\mu,\xi)\cdot\eta\right],
		\q\   \eta\in L^2(\Omega;\mathbb R^d).
	\end{align*}
	Whenever joint continuity is required below, we use the continuous
	representatives stipulated in the corresponding assumptions.
	Under the spatial differentiability and regularity needed for the identification,
	\begin{align*}
		D_\mu V(\mu,v)=\nabla_v\frac{\partial V}{\partial m}(\mu,v).
	\end{align*}
	\end{definition}
	\begin{definition}[Second-order measure derivatives]\label{def2.2}\sl
		For fixed \(v\), the second Lions derivative is the Lions derivative of the map \(\mu \longmapsto D_\mu V(\mu,v)\),
		taken componentwise:
		\begin{align*}
		D_\mu^2 V(\mu,v,v')
		=D_\mu \Big( D_\mu V(\cdot,v)\Big)(\mu,v').
		\end{align*}
	 When the indicated derivatives exist and differentiation may be interchanged,
		\begin{align*}
			D_\mu^2V(\mu,v,v')
			=\nabla_{v'}\nabla_v\frac{\partial^2V}{\partial m^2}(\mu,v,v').
		\end{align*}
		The derivative \(D_vD_\mu V\) differentiates the spatial argument of the first Lions derivative and is distinct from \(D_\mu^2V\).
	\end{definition}
	\begin{remark}\label{rm}\sl 
		Higher-order linear functional derivatives are defined recursively, with auxiliary spatial variables held fixed. Under the corresponding regularity and interchange conditions,
		\begin{align*}
			D_\mu^jV(\mu,v_1,\ldots,v_j)
			=\nabla_{v_j}\cdots\nabla_{v_1}
			\frac{\partial^jV}{\partial m^j}(\mu,v_1,\ldots,v_j).
		\end{align*}
		The mixed derivatives, continuous versions and uniform bounds required for each argument are specified in  \autoref{sec4} and \autoref{sec5}.
	\end{remark}
	
	\section{Related literature}\label{sec2}
The mean-field limit construction of Buckdahn, Djehiche, Li and Peng  \cite{buck_mea_09} and the backward propagation-of-chaos estimates of Lauri\`ere and Tangpi \cite{la_back_22} are direct antecedents of this
work. The latter combine BSDE stability with empirical Wasserstein bounds, including improved rates for structured interactions. 
Our
coupling estimates also account for the forward approximation and for interaction through the empirical law of the diagonal martingale integrands, under multidimensional monotonicity assumptions.

Dependence on the laws of both backward components has been studied beyond the Lipschitz setting. Hao, Hu, Tang and Wen \cite{hao_qua_25} establish well-posedness for scalar quadratic mean-field BSDEs with bounded terminal data, as well as particle convergence and rates under their assumptions. Tangpi and Zhou \cite{tangpi_zhou_22} obtain backward
propagation of chaos for heterogeneous FBSDEs with interactions through controls and quadratic generators, motivated by large-population investment games. Papapantoleon, Saplaouras and Theodorakopoulos
\cite{papapantoleon_24} treat general square-integrable martingale drivers and possibly discontinuous filtrations, without requiring exchangeability in their propagation-of-chaos argument. 
Their subsequent
work \cite{papapantoleon_stability_25} studies stability under perturbations of the data and driving martingales. 
Moreau \cite{moreau_22} considers
conditional backward propagation of chaos with common noise. Here the noise is Brownian, coefficient randomness is independently replicated, and the particle system is exchangeable. The refined conclusions concern
sharp marginal rates in two specified metrics.

For forward systems, synchronous coupling and empirical-measure estimates provide the baseline quantitative bounds; 
see \cite{sz_topic_book,four_on_2015}.
Jabin and Wang \cite{jabin_wang_2018} establish quantitative
relative-entropy estimates for a class of singular interaction
kernels by developing laws of large numbers at the exponential
scale. Bresch, Jabin and Wang \cite{bresch_jabin_wang_2023}
develop a modulated free-energy approach to mean-field limits
with singular attractive interactions.
 Lacker \cite{dan_hie_23} obtains sharp finite-marginal estimates through a relative-entropy hierarchy. 
 Arnese and Lacker \cite{man_sha_26} extend this approach to smooth nonlinear measure interactions. Their finite-horizon path-space Wasserstein estimate is the forward input to  \autoref{sec4};
 \autoref{sharpforsde} gives the reduction from constant invertible
diffusion to their scalar isotropic setting. 
Grass, Guillin and Poquet
\cite{grass_sha_25} prove sharp entropy estimates for a class of
nonconstant diffusions under additional structural assumptions. Their
result concerns a different forward input from the path-space bound used here.

For smooth measure functionals, Chassagneux, Szpruch and Tse
\cite{chassagen_22_aap} derive weak expansions through differential calculus on the space of measures. Frikha and Song
\cite{frikhaandsong_26} prove a weak error bound of order \(n^{-1}+h\) for an Euler particle approximation in their smooth regime. 
These
results provide the two sufficient sets of assumptions used in
 \autoref{sec5}, after passing to continuous time in the second
case. 
Neither smooth result requires uniform ellipticity. Related
approaches include the particle-flow analysis of Haji-Ali, Hoel and
Tempone \cite{haji_weak_25} and the uniform-in-time weak estimates on the torus of Delarue and Tse \cite{del_tse_25}. Our time horizon is fixed;  constants may depend on \(T\).

The regularity framework for classical decoupling fields and master
equations is developed in
\cite{buckdahn_li_peng_rainer_17,chassagneux_crisan_delarue_22,carmona18_2}.
Empirical evaluation and BSDE stability also appear in Germain, Pham and Warin \cite{germain_pham_warin_22}, who estimate particle approximations of PDEs on Wasserstein space. Their BSDE represents a single value indexed by a measure. Our comparison concerns individual backward
particles interacting through the empirical laws of both backward
components. It controls this interaction while retaining the full
matrix of martingale integrands. The resulting consistency estimate allows different forward law estimates to enter the same backward analysis; \autoref{abstract-transfer} states this separation explicitly, with an additive consistency cost of order \(n^{-1}\). First-order cancellation then yields the sharp path-space Wasserstein rate, whereas smooth weak estimates apply without that cancellation. The examples distinguish these two regimes and establish optimality for the value processes and diagonal integrands separately.

	\section{Well-posedness and coupling estimates}\label{sec3}
	
	To establish the well-posedness of the limiting
		 FBSDE system  \eqref{x1n}--\eqref{y1n},
	we make use of the following assumption:
	
	\begin{assumption}\label{lipp}\rm 
		There exist   constants \(p\in \left(1,2\right]\), \(q\geq p\), \(\beta \in \mathbb{R}\),    \(K_q\geq 0,\)
		\(L_b,L_\sigma, L_h, L_f\geq 0,\) and \(\gamma =\max\{L_b,L_\sigma,L_h,L_f\}\) such that,   for
		any \((t,x,y,z,\theta,\mu,\nu,
		x',y',z',\theta',\mu',\nu')\in [0,T]\times \Big(\mathbb{R}^d \times \mathbb{R}^k \times \mathbb{R}^{k\times d} \times \mathcal{P}_p(\mathbb{R}^d) \times
		\mathcal{P}_p(\mathbb{R}^k) \times \mathcal{P}_p(\mathbb{R}^{k\times d}) \Big)^2\),  the following hold. 
		The continuity, monotonicity and Lipschitz conditions
		below hold outside a common \(dt\otimes d\mathbb P\)-null set,
		for all displayed spatial and measure arguments; the terminal Lipschitz
		condition holds outside a common \(\mathbb P\)-null set.
		\begin{enumerate}[(i)]
		\item 
	\begin{align*}
		 	K_q=&\ \mathbb{E}\left[|X_0|^q
		 +|h(0,\delta_0)|^q+\Bigg(\int_0^T|b(t,0,\delta_0)|dt\Bigg)^q
		 +\Bigg(\int_0^T|\sigma(t,0,\delta_0)|^2dt\Bigg)^{\frac{q}{2}}\right.\\
	&\left.	 +\Bigg(\int_0^T|f(t,0,\delta_0,0,\delta_0,0,\delta_0)|dt\Bigg)^q \right]<\infty.
	\end{align*}
	\item 
	\( y\longmapsto f(t,x,\theta,y,\mu,z,\nu) \) is continuous.
	\item 
	For every \(M>0,\)
	\begin{align*}
	  \mathbb{E}\int_0^T \Big( 
	 \sup\limits_{|y|\leq M} \big|f(t,0,\delta_0,y,\delta_0,0,\delta_0)-f(t,0,\delta_0,0,\delta_0,0,\delta_0)\big| \Big) dt <\infty.
	\end{align*}
	\item 
	\(	\Big\langle y-y', \big(  f(t,x,\theta,y,\mu,z,\nu)-f(t,x,\theta,y',\mu,z,\nu)\big) \Big\rangle
	\leq \beta |y-y'|^2. \)
	\item 
	\(  |h(x,\theta)-h(x',\theta')|\leq 
	L_h \Big(|x-x'|+\mathcal{W}_p(\theta,\theta')\Big).\)
	\item
	\(   |b(t,x,\theta)-b(t,x',\theta')| 
	\leq L_b \Big(|x-x'|+\mathcal{W}_p(\theta,\theta') \Big).\)
	\item 
	\(  |\sigma(t,x,\theta)-\sigma(t,x',\theta')| 
	\leq L_\sigma \Big(|x-x'|+\mathcal{W}_p(\theta,\theta') \Big).\)
	\item 
	\( 	|f(t,x,\theta,y,\mu,z,\nu)-f(t,x',\theta',y,\mu',z',\nu')|
 	 \leq L_f \Big(|x-x'|+\mathcal{W}_p(\theta,\theta')+\mathcal{W}_p(\mu,\mu')+|z-z'|+\mathcal{W}_p(\nu,\nu')\Big).\)
		\end{enumerate}
	\end{assumption}
	
		\begin{proposition}\label{exanduni}\sl 
		Assume that \autoref{lipp} holds. Then
		the limiting FBSDEs \eqref{x1n} and \eqref{y1n} admit 
		a unique solution  \((X,Y,Z)\in 
		S_{\mathbb{F}}^{p}([0,T];\mathbb{R}^d)\times 	S_{\mathbb{F}}^{p}([0,T];\mathbb{R}^k)\times 	L_{\mathbb{F}}^{p}([0,T];\mathbb{R}^{k\times d})\).
		If   \(q=2,\) then   \((X,Y,Z)\in 
		S_{\mathbb{F}}^{2}([0,T];\mathbb{R}^d)\times 	S_{\mathbb{F}}^{2}([0,T];\mathbb{R}^k)\times 	L_{\mathbb{F}}^{2}([0,T];\mathbb{R}^{k\times d})\).
	\end{proposition}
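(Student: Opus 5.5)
The proof splits along the decoupled structure: first the forward McKean--Vlasov SDE \eqref{x1n}, then, with \(X\) and \(\mathbb P_{X_\cdot}\) frozen, the mean-field BSDE \eqref{y1n}. Each is solved by a Banach fixed-point argument on the appropriate \(L^p\)-type space.

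\textbf{Forward equation.} For a flow \(\theta=(\theta_t)\) of laws in \(\mathcal P_p(\mathbb R^d)\), continuous in \(\mathcal W_p\), we solve the classical Lipschitz SDE with coefficients \(b(t,x,\theta_t)\) and \(\sigma(t,x,\theta_t)\). Assumptions (vi)--(vii), together with the integrability in (i), give a unique solution in \(S^p_{\mathbb F}\) by the Burkholder--Davis--Gundy inequality; the finiteness of \(K_q\) with \(q\ge p\) supplies the moments. We then define \(\Psi(\theta)_t=\mathbb P_{X^\theta_t}\). For two flows we use \(\mathcal W_p(\mathbb P_{X^\theta_t},\mathbb P_{X^{\theta'}_t})^p\le \mathbb E|X^\theta_t-X^{\theta'}_t|^p\), BDG and Gronwall to get
\[
\sup_{s\le t}\mathcal W_p^p\big(\Psi(\theta)_s,\Psi(\theta')_s\big)\le C\int_0^t \mathcal W_p^p(\theta_s,\theta'_s)\,ds .
\]
Since \(p\le 2\), BDG in the \(p\)th power is valid for the stochastic integral. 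Iterating this estimate, or equivalently using an exponentially weighted sup-metric, shows that \(\Psi\) is a contraction, which gives existence and uniqueness of \(X\) in \(S^p\). If \(q=2\), the same estimates with \(p\) replaced by \(2\) in the moment bounds place \(X\) in \(S^2\).

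\textbf{Backward equation.} With \(X\) fixed, let \(\xi=h(X_T,\mathbb P_{X_T})\) and \(g(t,y,\mu,z,\nu)=f(t,X_t,\mathbb P_{X_t},y,\mu,z,\nu)\). By (i), (v) and (viii), \(\xi\in L^p\) and \(\int_0^T|g(t,0,\delta_0,0,\delta_0)|dt\in L^p\). Now freeze flows \((\mu_t,\nu_t)\) of laws. The resulting BSDE has a generator that is continuous and monotone in \(y\) by (ii) and (iv), Lipschitz in \(z\) by (viii), and satisfies the growth integrability (iii). The \(L^p\) theory for monotone BSDEs with \(p\in(1,2]\) (Briand--Delyon--Hu--Pardoux--Stoica) therefore gives a unique solution \((Y,Z)\in S^p\times L^p\). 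Define \(\Gamma(\mu,\nu)=(\mathbb P_{Y_t},\mathbb P_{Z_t})_t\). Since \(\mathbb P_{Z_t}\) is only defined for a.e.\ \(t\), the \(\nu\)-component is measured in an integrated metric such as \(\int_0^T e^{\alpha t}\mathcal W_p^p(\nu_t,\nu'_t)\,dt\), paired with \(\sup_t e^{\alpha t}\mathcal W_p^p(\mu_t,\mu'_t)\).

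\textbf{Main obstacle: the contraction for \(\Gamma\).} The difficulty is that \(\mathcal W_p(\nu_t,\nu'_t)\) enters the generator exactly like \(|z-z'|\), so \(\Gamma\) is Lipschitz at the level of \(Z\) with no time smallness from the integral. I would first apply the \(p\)-power Itô formula for \(|\delta Y|^p\) (the \(p<2\) version with the \(|\delta Y|^{p-2}\mathbf 1_{\delta Y\ne0}|\delta Z|^2\) term) and take expectations. The term \(L_f\,\mathcal W_p(\nu_t,\nu'_t)\) is then absorbed with Young's inequality, \(|\delta Y|^{p-1}\mathcal W_p\le \epsilon^{-1}|\delta Y|^p+\epsilon\,\mathcal W_p^p\). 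Choosing the weight \(\alpha\) large then makes the \(\mu\)-part small. For the \(\nu\)-part one only gets a factor \(C\epsilon\), which can be taken below \(1\); this yields the contraction. A cruder alternative, if the \(L^p\) absorption is delicate, is to obtain a fixed point on a short terminal interval \([T-\delta,T]\) and paste the pieces backward, since the constants depend only on \(L_f\) and \(p\).

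Uniqueness of the full solution follows from uniqueness of each fixed point. For \(q=2\), repeating the argument with \(p=2\) estimates, or using a priori \(L^2\) bounds for the fixed point, gives \((Y,Z)\in S^2\times L^2\).
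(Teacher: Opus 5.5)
Your forward step, and your fallback for the backward step, are the paper's proof. The paper freezes the backward law flows, solves the monotone BSDE by Briand et al., obtains a contraction on $[T-\delta,T]$ from their $L^p$ stability estimate, and pastes backward over a partition.

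However, the obstacle you describe does not exist, and the reason it does not is exactly where the paper's smallness comes from. With $(\mu,\nu)$ frozen, $\mathcal W_p(\nu_t,\nu'_t)$ acts as a deterministic source term, not as a dependence on the unknown. It enters the stability bound only through $\bigl(\int_{T-\delta}^T\mathcal W_p(\nu_s,\nu'_s)\,ds\bigr)^p\leq\delta^{p/2}\bigl(\int_{T-\delta}^T\mathcal W_p^2(\nu_s,\nu'_s)\,ds\bigr)^{p/2}$. Using $\mathcal W_p^2(\mathbb P_{V_s},\mathbb P_{V'_s})\leq(\mathbb E|V_s-V'_s|^p)^{2/p}$ and Minkowski's integral inequality (valid since $2/p\geq1$), this is at most $\delta^{p/2}\,\mathbb E\bigl[(\int_{T-\delta}^T|V_s-V'_s|^2ds)^{p/2}\bigr]$. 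That is why $C(\delta^p+\delta^{p/2})<1$ can be arranged. Your stated reason for the fallback (``the constants depend only on $L_f$ and $p$'') omits this. Taken together with your claim of ``no time smallness'', it would not produce a contraction.

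Your primary route, a global exponentially weighted contraction on law flows, is a valid alternative once one missing step is supplied. The $p$-power It\^o formula controls $\mathbb E\int e^{\alpha s}|\delta Y_s|^{p-2}\mathbf 1_{\{\delta Y_s\neq0\}}|\delta Z_s|^2ds$. Your output metric, however, is dominated by $\mathbb E\int e^{\alpha s}|\delta Z_s|^pds$, so you need the interpolation
\begin{align*}
\mathbb E\int_0^T e^{\alpha s}|\delta Z_s|^pds\leq\Bigl(\mathbb E\int_0^T e^{\alpha s}|\delta Y_s|^{p-2}\mathbf 1_{\{\delta Y_s\neq0\}}|\delta Z_s|^2ds\Bigr)^{\frac{p}{2}}\Bigl(\mathbb E\int_0^T e^{\alpha s}|\delta Y_s|^pds\Bigr)^{1-\frac{p}{2}}.
\end{align*}
This uses the fact that $\delta Z=0$ a.e.\ on $\{\delta Y=0\}$. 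The second factor is then controlled by the bound on $\alpha\,\mathbb E\int e^{\alpha s}|\delta Y_s|^pds$ from the same It\^o inequality. Arrange Young's inequality so that the coefficient of $\mathcal W_p^p(\nu_s,\nu'_s)$ is $\epsilon$, and take $\alpha$ correspondingly large. Then both factors are at most $C\epsilon$ times the input distance, hence so is the product, and the contraction closes.

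The trade-off between the two routes is as follows. Yours gives a single fixed point on $[0,T]$ without pasting, but you must redo the $L^p$ estimates by hand. The paper instead uses Briand et al.'s Proposition 3.2 as a black box and pays with a finite backward induction over subintervals.
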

	\begin{proof}
			A Picard iteration with the law flow frozen, followed by the
		Burkholder--Davis--Gundy and Gronwall inequalities, applies also to the 	progressively measurable coefficients considered here. Thus   equation \eqref{x1n} admits a unique solution 
		\(X\in S^p_{\mathbb{F}}([0,T];\mathbb{R}^d)\).
		Fix  \(( U,V)\in  	S_{\mathbb{F}}^{p}([0,T];\mathbb{R}^k)\times 	L_{\mathbb{F}}^{p}([0,T];\mathbb{R}^{k\times d})\),
		 and define
		\begin{align*}
		 f^{U,V,X}(t,y,z)&=f(t,X_t,\mathbb{P}_{X_t},y,\mathbb{P}_{U_t},z,\mathbb{P}_{V_t})\q\ \hbox{and} \q\ 
		 \psi_M(t)=\sup\limits_{|y|\leq M}|f^{U,V,X}(t,y,0)-f^{U,V,X}(t,0,0)|.
		\end{align*}
		In view of  \autoref{lipp},
		\begin{align*}
|f^{U,V,X}(t,0,0)|&\leq
|f(t,0,\delta_0,0,\delta_0,0,\delta_0)|
+L_f\Big(|X_t| +\mathcal{W}_p(\mathbb{P}_{X_t},\delta_0)
+\mathcal{W}_p(\mathbb{P}_{U_t},\delta_0)
+\mathcal{W}_p(\mathbb{P}_{V_t},\delta_0)\Big),\\
|h(X_T,\mathbb{P}_{X_T})|
&\leq
|h(0,\delta_0)|
+L_h\Big(|X_T| +\mathcal{W}_p(\mathbb{P}_{X_T},\delta_0)\Big),\\
\psi_M(t)
&\leq \sup\limits_{|y|\leq M}
|f(t,0,\delta_0,y,\delta_0,0,\delta_0)-f(t,0,\delta_0,0,\delta_0,0,\delta_0)|\\
&\ +2L_f\Big(|X_t| +\mathcal{W}_p(\mathbb{P}_{X_t},\delta_0)
+\mathcal{W}_p(\mathbb{P}_{U_t},\delta_0)
+\mathcal{W}_p(\mathbb{P}_{V_t},\delta_0)\Big).
		\end{align*}
			By the mixed-norm form of Minkowski's inequality, valid for \(p\leq2\), 	and Cauchy--Schwarz in time,
		\begin{align*}
	 \Bigg(\int_0^T \mathcal{W}_p(\mathbb{P}_{V_t},\delta_0)dt\Bigg)^p
	 \leq T^{\frac{p}{2}} \mathbb{E}\left[
	 \Bigg(\int_0^T |V_t|^2dt
	 \Bigg)^{\frac{p}{2}}\right].
		\end{align*}
	The terms involving \(U\) and \(X\) are handled similarly. Hence the preceding bounds, together with
		 the integrability of \(U, V, X\), imply that the conditions
 \textbf{(H1)}--\textbf{(H5)} in   Briand et al. \cite{bran_lp_03} are satisfied.   Then  Briand et al. \cite[Theorem 4.2]{bran_lp_03} yields that there exists a unique solution \((Y,Z)\in  	S_{\mathbb{F}}^{p}([0,T];\mathbb{R}^k)\times 	L_{\mathbb{F}}^{p}([0,T];\mathbb{R}^{k\times d})\) satisfying
		\begin{align}\label{uvx}
			&Y_t=h(X_T,\mathbb{P}_{X_T})+\int_t^T f^{U,V,X}(s, Y_s,  Z_s)ds
			-\int_t^T Z_sdW_s,\q\ 0\leq t\leq T.
		\end{align}
		Define  a mapping \( 
		\Phi(U,V)=(Y,Z)\), which is well-defined on
		\(  S_{\mathbb{F}}^{p}([0,T];\mathbb{R}^{k})\times 	L_{\mathbb{F}}^{p}([0,T];\mathbb{R}^{k\times d})\). Then,
		consider two pairs of   processes \((U,V), (U',V') \in  S_{\mathbb{F}}^{p}([T-\delta,T];\mathbb{R}^{k })\times 	L_{\mathbb{F}}^{p}([T-\delta,T];\mathbb{R}^{k\times d})\) for some  constant
		\(\delta>0\).
		Applying the   \(L^p\)-stability estimates  of Briand et al. \cite[Proposition 3.2]{bran_lp_03}, 
		there exists a constant \(C>0,\) depending only on \(p, \beta, L_f\) and \(T\) such that 
		\begin{align*}
			\mathbb{E}\left[ \sup\limits_{t\in [T-\delta,T]}|Y_t-Y_t'|^p +\Bigg(
			\int_{T-\delta}^T |Z_s-Z_s'|^2ds\Bigg)^{\frac{p}{2}}\right]
			&\leq  C
			\mathbb{E}\left[  \Bigg(
			\int_{T-\delta}^T \Big( \mathcal{W}_p(\mathbb{P}_{U_s},\mathbb{P}_{U_s'})
			+\mathcal{W}_p(\mathbb{P}_{V_s},\mathbb{P}_{V_s'})\Big)
			ds\Bigg)^p\right].
		\end{align*}
		By  Jensen's   and Minkowski's inequalities, we have
		\begin{align*}
			&\ \mathbb{E}\left[ \sup\limits_{t\in [T-\delta,T]}|Y_t-Y_t'|^p +\Bigg(
			\int_{T-\delta}^T |Z_s-Z_s'|^2ds\Bigg)^{\frac{p}{2}}\right]\\
			&\leq  C \Bigg\{ 
			\delta^{p}  \sup\limits_{t\in [T-\delta,T]} \mathcal{W}^p_p(\mathbb{P}_{U_t},\mathbb{P}_{U_t'})    
			+\delta^{\frac{p}{2}}\mathbb{E}\left[   \Bigg(
			\int_{T-\delta}^T  \mathcal{W}_p^2(\mathbb{P}_{V_s},\mathbb{P}_{V_s'})   ds\Bigg)^{\frac{p}{2}}\right]\Bigg\}\\
				&\leq  C \Bigg\{ 
			\delta^{p}  \sup\limits_{t\in [T-\delta,T]} \mathcal{W}^p_p(\mathbb{P}_{U_t},\mathbb{P}_{U_t'})    
			+\delta^{\frac{p}{2}}   \Bigg(
			\int_{T-\delta}^T \Big(   \mathbb{E}\left[   |V_s-V_s'|^p
			\right]\Big)^{\frac{2}{p}} ds  \Bigg)^{\frac{p}{2}}   \Bigg\}\\
			&\leq C \mathbb{E}\left[ \delta^{p}  \sup\limits_{t\in [T-\delta,T]}|U_t-U_t'|^p +\delta^{\frac{p}{2}}\Bigg(
			\int_{T-\delta}^T |V_s-V_s'|^2ds\Bigg)^{\frac{p}{2}}\right].
		\end{align*}
		Choosing \(\delta>0\)  such that \(C(\delta^p+\delta^{\frac{p}{2}})< 1\) ensures that \(\Phi\) is a strict contraction on \(	S_{\mathbb{F}}^{p}([T-\delta,T];\mathbb{R}^k)\times 	L_{\mathbb{F}}^{p}([T-\delta,T];\mathbb{R}^{k\times d}) \).  
		To extend the construction to the whole interval \([0,T]\), choose a partition such that
		\begin{align*}
			0=t_0<t_1<\dots <t_N=T\q\ \hbox{and} \q\
			\max_{l} (t_{l+1}-t_l)  \leq \delta.
		\end{align*}
		Beginning from the interval \([t_{N-1},T]\), we solve the equation \eqref{uvx} successively backward on \([t_l,t_{l+1}]\), \(l=N-2,\cdots, 0.\) \(Y_{t_{l+1}}\) obtained on the subsequent interval is used as the terminal condition on \([t_l,t_{l+1}].\)
		Stitching these local solutions yields a solution on \([0,T]\), while
		local uniqueness implies
		global uniqueness.
		If \(q=2\), the same fixed-point argument can be carried out with
		exponent \(2\), using \(\mathcal{W}_p\leq \mathcal{W}_2\).
	\end{proof}
	
	Next, we work on the product probability space
	\((\Omega^n,\mathscr{F}^n,\mathbb{F}^n,\mathbb{P}^n)\),
	where \(\Omega^n=\prod_{i=1}^n\Omega \), \(\mathscr{F}^n=\overline{\bigotimes_{i=1}^n  \mathscr{F}}^{\mathbb{P}^n}\), \(\mathbb{F}^n=(\mathscr{F}_t^n)_{t\in [0,T]}\), and \(\mathbb{P}^n= \mathbb{P}^{\otimes n}\).
	\(\mathbb{E}\) denotes the expectation under the product probability measure \(\mathbb{P}^n\).
	  For each \(i= 1,\cdots, n,\) let   \((\bar{X}^i,\bar{Y}^i, \bar{Z}^i)\) be
	 	 i.i.d. copies of the solution
	\((X,Y,Z)\) to \eqref{x1n} and \eqref{y1n}, and satisfy
	\begin{align}
		&	\bar{X}_t^i=\bar{X}_0^i+\int_0^t b(s,\bar{X}_s^i,\mathbb{P}_{\bar{X}_s^i})ds+
	\int_0^t 	\sigma(s,\bar{X}_s^i,\mathbb{P}_{\bar{X}_s^i}) dW^i_s,     \label{xi}\\
		&	\bar{Y}_t^i=h(\bar{X}^i_T,\mathbb{P}_{\bar{X}_T^i})+\int_t^T f(s,\bar{X}_s^i,\mathbb{P}_{\bar{X}_s^i},\bar{Y}^i_s,\mathbb{P}_{\bar{Y}^i_s}, \bar{Z}^i_s,\mathbb{P}_{\bar{Z}^i_s})ds -\int_t^T \bar{Z}^i_sdW^i_s,      \label{yzi}
	\end{align}
	where \(\bar{X}_0^i=X_0^i\). 
	Define	\begin{equation*} 
		\bar{Z}_t^{i,j}=\left\{
		\begin{aligned}
			\bar{Z}^i_t,  \q\ &\hbox{if} ~j=i,\\
			0,    \q\ &\hbox{if}~  j\neq i.
		\end{aligned}
		\right.
	\end{equation*}
	Throughout the backward equations, the coefficients \(h\) and \(f\) in the \(i\)-th particle
	equation are evaluated at the coordinate \(\omega^i\).
	More precisely, for \(\overrightarrow{\omega}=(\omega^1,\cdots,\omega^n)\in \Omega^n\), the \(i\)-th component of the finite-dimensional generator is given by
	\begin{align*}
	F^n_i(t,\overrightarrow{\omega},\overrightarrow{y},\overrightarrow{z})
	=f(t,\omega^i,X_t^i,\theta_t^n,y^i,\mathcal{L}^n(\overrightarrow{y}),z^{i,i},\mathcal{L}^n(\overrightarrow{z})),
	\end{align*}
	where
	\begin{align*}
	\mathcal{L}^n(\overrightarrow{y})=\frac{1}{n}\sum_{j=1}^{n}\delta_{y^j}\q\
	\hbox{and} \q\ 
	\mathcal{L}^n(\overrightarrow{z})=\frac{1}{n}\sum_{j=1}^{n}\delta_{z^{j,j}}.
	\end{align*}
		The coordinate inputs are i.i.d., all coefficients are copied coordinatewise,
	and the interactions are invariant under permutations. Uniqueness therefore
	implies that the coupled family
	\begin{align*}
	 (X^i,\bar{X}^i,Y^i,\bar{Y}^i,(Z^{i,j})_{j=1}^n,\bar{Z}^i)_{i=1}^n
	\end{align*}
	is invariant in law under \(i\longmapsto\pi(i)\) and
\((i,j)\longmapsto(\pi(i),\pi(j))\), for every permutation \(\pi\), once
well-posedness of the finite system has been established. In particular,
the diagonal family and the row-energy errors are exchangeable.
We next establish
the convergence of the particles
 \((X^i,Y^i,(Z^{i,j})_{j=1}^n)_{i=1}^n\)
	by the classical coupling approach.

	\begin{theorem}\label{xyzcon}\sl 
		Assume that \autoref{lipp} holds with \(p\in (1,2)\), \(q=2\), and
		\(\sigma\) is independent of \(\theta\).  
		Then for every \(n\geq 1,\) every \(i\in \{1,\cdots,n\}\), 
		every 	\(r\in (p,2]\), we have
		\begin{align}\label{converyz}
			\mathbb{E}\left[ \sup\limits_{t\in [0,T]}
			\Big( |X_t^i-\bar{X}_t^i|^p+ |Y^i_t-\bar{Y}^i_t|^p\Big)
			+\Bigg( \int_0^T \Big(  |Z_t^{i,i}-\bar{Z}^i_t|^2
			+\sum_{j\neq i} |Z_t^{i,j}|^2\Big)dt \Bigg)^{\frac{p}{2}} 
			\right]\leq C\bar{\lambda}_{p,r,n},
		\end{align}
		where 	  \(r\neq \frac{sp}{s-p}\) for every
		\(s\in\{d, k, kd\} \) with \(p<\frac{s}{2}\),
		 \(\bar{\lambda}_{p,r,n}=\lambda_{p,r,n}(d)+\lambda_{p,r,n}(k)+\lambda_{p,r,n}(kd)\),
		 \(C\) depends only on \(p,k,d,r, \beta, \gamma,   K_2, T \),  
		 but is independent of \(n, i\),
		and \(\lambda_{p,r,n}(s)\) is defined by
		\begin{equation}\label{lamda}
			\lambda_{p,r,n}(s)=\left\{
			\begin{aligned}
				n^{-\frac{1}{2}}+n^{-\frac{r-p}{r}}, \qquad\qquad  \q\ &\hbox{if} ~p>\frac{s}{2}; \\
				n^{-\frac{1}{2}}
				\log(1+n)+n^{-\frac{r-p}{r}},\q
				& \hbox{if} ~p=\frac{s}{2};\\
				n^{-\frac{p}{s}}+n^{-\frac{r-p}{r}},\q\ \qquad \qquad &\hbox{if} ~
				p< \frac{s}{2}.
			\end{aligned}
			\right.
		\end{equation}
	\end{theorem}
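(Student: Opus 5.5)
We use synchronous coupling: the particle system \eqref{x1}--\eqref{y1} and the i.i.d. copies \eqref{xi}--\eqref{yzi} live on the same product space with the same Brownian motions, and we compare them directly. Well-posedness of the finite system comes first. The forward part is a standard Lipschitz SDE in \(\mathbb R^{nd}\), since \(\mathcal W_p(\mathcal L^n(\vec x),\mathcal L^n(\vec x'))\leq (\frac1n\sum|x^j-x'^j|^p)^{1/p}\). The backward part is a multidimensional BSDE in \(\mathbb R^{nk}\) driven by \((W^1,\dots,W^n)\) with generator \(F^n\). This generator is monotone in \(\vec y\) with constant \(\beta+L_f\), because the empirical-law term is Lipschitz in \(\vec y\) with constant \(L_f n^{-1/p}\cdot n^{1/p}\) in the Euclidean norm. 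It is Lipschitz in \(\vec z\). So \cite[Theorem 4.2]{bran_lp_03} applies, and exchangeability follows as explained before the statement.

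\textbf{Forward estimate.} Set \(\delta X^i=X^i-\bar X^i\) and \(\bar\theta^n_s=\frac1n\sum_j\delta_{\bar X^j_s}\). Because \(\sigma\) does not depend on \(\theta\), the diffusion difference is controlled by \(|\delta X^i|\) alone. In the drift we split
\[
\mathcal W_p(\theta^n_s,\mathbb P_{X_s})\leq \mathcal W_p(\theta^n_s,\bar\theta^n_s)+\mathcal W_p(\bar\theta^n_s,\mathbb P_{X_s}).
\]
For the first term, \(\mathcal W_p^p(\theta^n_s,\bar\theta^n_s)\leq\frac1n\sum_j|\delta X^j_s|^p\). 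Taking expectations and using exchangeability, this equals \(\mathbb E|\delta X^i_s|^p\). Working with \(p\)th moments, BDG and Gronwall give
\[
\mathbb E\sup_t|\delta X^i_t|^p\leq C\int_0^T\mathbb E\,\mathcal W_p^p(\bar\theta^n_s,\mathbb P_{X_s})\,ds .
\]
This is the one place where measure-independence of \(\sigma\) matters. Otherwise the BDG term would involve \((\int\mathcal W_p^2ds)^{p/2}\), which is not controlled by \(\mathbb E\mathcal W_p^p\) for an empirical measure. The Fournier--Guillin bound \cite{four_on_2015} for \(\mathcal W_p^p\) of the empirical measure of i.i.d. samples with finite \(r\)th moment, \(r\in(p,2]\), gives exactly \(\lambda_{p,r,n}(d)\). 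The \(r\)th moment is finite since \(q=2\), and the excluded values \(r=sp/(s-p)\) are those where their bound acquires a logarithm.

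\textbf{Backward estimate.} Apply the \(L^p\) a priori estimate of \cite[Proposition 3.2]{bran_lp_03} to the pair \((Y^i,(Z^{i,j})_j)\) against \((\bar Y^i,(\bar Z^{i,j})_j)\), viewed as BSDEs for a single particle driven by the \(n\)-dimensional noise. The perturbation of the generator is bounded by
\[
L_f\big(|\delta X^i_s|+\mathcal W_p(\theta^n_s,\mathbb P_{X_s})+\mathcal W_p(\mu^n_s,\mathbb P_{Y_s})+\mathcal W_p(\nu^n_s,\mathbb P_{Z_s})\big)
\]
and the \(z\)-Lipschitz part is bounded by \(L_f|Z^{i,i}-\bar Z^i|\). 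Split each empirical term through the i.i.d. empirical measures \(\bar\mu^n,\bar\nu^n\), as in the forward step. The coupling parts \(\frac1n\sum_j|\delta Y^j_s|^p\) and \(\frac1n\sum_j|Z^{j,j}_s-\bar Z^j_s|^p\) must be absorbed.

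The main obstacle is the \(\nu\)-term. The estimate contains \(\mathbb E(\int_0^T\mathcal W_p(\nu^n_s,\bar\nu^n_s)ds)^p\), while the left side controls only \(\mathbb E(\int|Z^{i,i}-\bar Z^i|^2)^{p/2}\). Here we use Jensen in time and then the mixed Minkowski inequality (valid since \(p\leq2\)), as in the proof of \autoref{exanduni}, together with exchangeability. This bounds the term by
\[
C\delta^{p/2}\,\mathbb E\Big(\int|Z^{i,i}-\bar Z^i|^2\Big)^{p/2}
\]
on a window of length \(\delta\). The \(\mu\)-term is similar, with a factor \(\delta^p\). For small \(\delta\) these are absorbed into the left side. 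We then iterate backward over a partition of \([0,T]\) with mesh \(\delta\), using the previous window's terminal error and the already-established forward bound. The remaining i.i.d. empirical errors \(\mathbb E\mathcal W_p^p(\bar\mu^n_s,\mathbb P_{Y_s})\) and \(\mathbb E\mathcal W_p^p(\bar\nu^n_s,\mathbb P_{Z_s})\) are again estimated by \cite{four_on_2015}. For \(\nu\), the \(2\)nd moment is finite only for a.e. \(s\) in an integrated sense, so we integrate the pointwise bound in \(s\) using \(\int\mathbb E|\bar Z_s|^2ds<\infty\) and Hölder. This produces \(\lambda_{p,r,n}(k)\) and \(\lambda_{p,r,n}(kd)\). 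The terminal term is handled through \(L_h\) and the forward bound. Since the off-diagonal \(\bar Z^{i,j}\) vanish, the row sum \(\sum_{j\neq i}|Z^{i,j}|^2\) is part of the \(Z\)-energy difference on the left side, which yields \eqref{converyz}.
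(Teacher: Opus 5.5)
Your proposal is correct and follows essentially the same route as the paper: synchronous coupling, Fournier--Guillin bounds for the i.i.d.\ empirical measures, and the $L^p$ stability estimate of \cite{bran_lp_03} on windows of length $\delta$, with the coupling terms absorbed through the factors $\delta^p$ (for $\mu^n$) and $\delta^{p/2}$ (for $\nu^n$), followed by a finite backward induction over the partition. The differences are cosmetic: the paper gets the forward $p$th-moment bound from It\^o's formula for $(|x|^2+\varepsilon)^{p/2}$ rather than direct BDG, and it handles the $\nu$-term by pathwise H\"older in time, $\int_{t_l}^{t_{l+1}}|\Delta Z|^p\,dt\leq\delta^{1-p/2}\big(\int_{t_l}^{t_{l+1}}|\Delta Z|^2dt\big)^{p/2}$, rather than mixed Minkowski; also, the Euclidean Lipschitz factor in your well-posedness step should read $n^{-1/2}\cdot n^{1/2}$ (using $p\leq 2$), which gives the same monotonicity constant $\beta+L_f$.
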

	\begin{proof}
		Here and throughout the proof, \(C>0\) denotes a finite constant that may vary from line to line and is independent of \(n\) and  \(i\). Let
		\begin{align*}
			&\qquad \q \theta_t\triangleq \mathbb{P}_{X_t},\q\ 
			\mu_t\triangleq \mathbb{P}_{Y_t},\q\
			\nu_t\triangleq \mathbb{P}_{Z_t},\\
			&
			\bar{\theta}_t^n\triangleq
			\frac{1}{n}\sum_{i=1}^{n} \delta_{\bar{X}_t^i},\q\ 
			\bar{\mu}^n_t\triangleq\frac{1}{n}\sum\limits^n_{i=1}\delta_{\bar{Y}_t^i},\q\
			\bar{\nu}^n_t\triangleq \frac{1}{n}\sum\limits^n_{i=1}\delta_{\bar{Z}_t^{i}}.
		\end{align*}
		By \autoref{exanduni}, the limiting FBSDE system \eqref{x1n}--\eqref{y1n} admits a unique solution \((X,Y,Z)\in 
		S_{\mathbb{F}}^{2}([0,T];\mathbb{R}^d)\times 	S_{\mathbb{F}}^{2}([0,T];\mathbb{R}^k)\times 	L_{\mathbb{F}}^{2}([0,T];\mathbb{R}^{k\times d})\). 
	For each \(i=1, \cdots, n,\) the forward particle system
		\eqref{x1} is a standard finite-dimensional Lipschitz SDE and hence admits a unique solution \(X^i \in S_{\mathbb{F}^n}^{2}([0,T];\mathbb{R}^d)\).  Given the forward system, 
	since 
		\(p\in (1,2)\),
		\begin{align*}
\mathcal{W}_p\Big(\frac{1}{n}\sum_{i=1}^n \delta_{y_i},\frac{1}{n}\sum_{i=1}^n\delta_{y_i'}\Big)
\leq \Big(\frac{1}{n}\sum_{i=1}^n|y_i-y_i'|^p  \Big)^{\frac{1}{p}}
\leq \Big(\frac{1}{n}\sum_{i=1}^n|y_i-y_i'|^2  \Big)^{\frac{1}{2}}
=\frac{1}{\sqrt{n}}\Big( \sum_{i=1}^n|y_i-y_i'|^2  \Big)^{\frac{1}{2}}.
		\end{align*}
For any \(\overrightarrow{y}=(y^1,\cdots,y^n)\in \mathbb{R}^{nk}\),
\(\overrightarrow{z}=(z^1,\cdots,z^n)\in \mathbb{R}^{nk\times nd}\),
let
		\begin{align*}
 F^n_i(t,\overrightarrow{\omega},\overrightarrow{y},\overrightarrow{z})
 =f(t,\omega^i,X_t^i,\theta_t^n,y^i,\mathcal{L}^n(\overrightarrow{y}),z^{i,i},\mathcal{L}^n(\overrightarrow{z}))\q\ 
 \hbox{and} \q\ 
 G_i^n(\overrightarrow{\omega})=h(\omega^i,X_T^i(\overrightarrow{\omega}),\theta_T^n(\overrightarrow{\omega})).
		\end{align*}
		In view of \autoref{lipp},
		\begin{align*}
	\sum_{i=1}^{n}	\Big\langle y^i-(y^i)', \big(  F_i^n(t,\overrightarrow{y},\overrightarrow{z})-F_i^n(t,\overrightarrow{y'},\overrightarrow{z}) \Big\rangle
&	\leq 
	 \beta 	\sum_{i=1}^{n} |y^i-(y^i)'|^2 +L_f \mathcal{W}_p(\mathcal{L}^n(\overrightarrow{y}),\mathcal{L}^n(\overrightarrow{y'})) \sum_{i=1}^{n}|y^i-(y^i)'|\\
	 &\leq ( \beta +L_f)	\sum_{i=1}^{n} |y^i-(y^i)'|^2;\\
	\sum_{i=1}^{n} |   	F_i^n(t,\overrightarrow{y},\overrightarrow{z})-F_i^n(t,\overrightarrow{y},\overrightarrow{z'}) |^2
&	\leq 2L_f^2 \Big( 	\sum_{i=1}^{n}|z^{i,i}-(z^{i,i})'|^2+n\mathcal{W}_p^2(\mathcal{L}^n(\overrightarrow{z}),\mathcal{L}^n(\overrightarrow{z'}))  \Big)	 \\
	&\leq  4L_f^2 \sum_{i=1}^{n} |z^{i,i}-(z^{i,i})'|^2
	\\ 
	&\leq 4L_f^2 |\overrightarrow{z}-\overrightarrow{z'}|^2.
		\end{align*}
		Moreover, \(\overrightarrow{y}\longmapsto F_i^n(t,\overrightarrow{y},\overrightarrow{z})\) is continuous.
		For 
		\(	G^n=(G_1^n,\cdots,G_n^n)\) and
		\(F^n=(F_1^n,\cdots,F_n^n)\), we have
		\begin{align*}
	G^n\in L^2_{\mathscr{F}_T^n}(\Omega^n;\mathbb{R}^{nk})\q\ 
	\hbox{and}\q\ 
	\mathbb{E}\left[
	\Bigg(\int_0^T |F^n(t,0,0)|dt\Bigg)^2\right]<\infty.
		\end{align*}
		For every \(\bar{M}>0,\)
		\begin{align*}
\psi^n_{\bar{M}}(t)
= \sup\limits_{|\overrightarrow{y}|\leq \bar{M}}
|F^n(t,\overrightarrow{y},0)-F^n(t,0,0)|\in L^1(dt\otimes d\mathbb{P}^n).
		\end{align*}
		The backward particle system \eqref{y1} is an \(\mathbb{R}^{nk}\)-valued BSDE driven by the
		\(nd\)-dimensional Brownian motion \((W^1,\cdots,W^n)\).
		The preceding estimates show that its 
		 generator is monotone in \(y\) and Lipschitz continuous in \(z\),
		with constants independent of \(n\).
		Briand et al. \cite[Theorem 4.2]{bran_lp_03}, applied with exponent \(2\), therefore give a unique solution
		\((\overrightarrow{Y},\overrightarrow{Z}) \in 	S_{\mathbb{F}^n}^{2}([0,T];\mathbb{R}^{nk})\times 	L_{\mathbb{F}^n}^{2}([0,T];\mathbb{R}^{ nk \times nd}) \)  of BSDE   \eqref{y1}, where \(\overrightarrow{Y}=(Y^1,\cdots,Y^n) \), 
		\(\overrightarrow{Z}=(Z^1,\cdots,Z^n) \).

		Now we establish the stability estimates for forward particle system.
		For any \(\varepsilon>0,\) let \(\psi_\varepsilon(x)=\big(|x|^2+\varepsilon \big)^{\frac{p}{2}}\). Applying It\^o's formula to \(\psi_\varepsilon(X_t^i-\bar{X}_t^i)\),
		\begin{align*}
   \psi_\varepsilon(X_t^i-\bar{X}_t^i)
 \leq&\  \varepsilon^\frac{p}{2}
 + \int_0^t  \psi_\varepsilon^{\frac{p-2}{p}}(X_s^i-\bar{X}_s^i)
 \Big( p |X_s^i-\bar{X}_s^i| |b(s,X_s^i,\theta_s^n)
 -b(s,\bar{X}_s^i, \theta_s)| \\
 & +\frac{p}{2}|\sigma(s,X_s^i )
 -\sigma(s,\bar{X}_s^i )|^2  \Big)ds
 +p\int_0^t  \psi_\varepsilon^{\frac{p-2}{p}}(X_s^i-\bar{X}_s^i)
  \langle  X_s^i-\bar{X}_s^i, \big(\sigma(s,X_s^i )
 -\sigma(s,\bar{X}_s^i)\big) dW_s^i\rangle,
		\end{align*}
			where the Hessian term proportional to \(p(p-2)\) is nonpositive.
		The stochastic-integral estimate is first applied to stopped processes;
		the resulting bounds allow the stopping levels to tend to infinity.
		Using the Lipschitz continuity of \(b\) and \( \sigma\),  the
		Burkholder--Davis--Gundy inequality and Young's inequality yield that
		 \begin{align*}
		  \mathbb{E}\left[\sup\limits_{u\in [0,t]}
		  \psi_\varepsilon(X_u^i-\bar{X}_u^i)\right]
		  \leq \varepsilon^\frac{p}{2}+
		   C \mathbb{E}  
		  \int_0^t  \Big( \psi_\varepsilon(X_s^i-\bar{X}_s^i)+  \mathcal{W}_p^p(\theta_s^n,\theta_s)\Big)
		  ds.
		 \end{align*}
		 Letting \(\varepsilon \longrightarrow 0,\) we obtain
		\begin{align}\label{xxi}
		 \mathbb{E}\left[\sup\limits_{s\in [0,t]}
		 |X^i_s-\bar{X}^i_s|^p
		 \right]\leq
		 C \mathbb{E}  
		 \int_0^t  \Big( \sup\limits_{r\in [0,s]} |X^i_r-\bar{X}^i_r|^p+  \mathcal{W}_p^p(\theta_s^n,\theta_s)\Big)
		 ds,\q\ t\in [0,T].
		\end{align}
			By the triangle inequality, 
			\begin{align*}
 \mathcal{W}^p_p(\theta_t^n,\theta_t)
 \leq 2^p \Big(	\mathcal{W}^p_p(\theta_t^n,\bar{\theta}^n_t)+
 \mathcal{W}^p_p(\bar{\theta}_t^n,\theta_t)
 \Big),
			\end{align*}
where
			\begin{align*}
 	\mathcal{W}^p_p(\theta_t^n,\bar{\theta}^n_t)
 	\leq 
 	\frac{1}{n}\sum_{i=1}^{n}|X_t^i-\bar{X}_t^i|^p.
			\end{align*}
		By the result of Fournier and Guillin \cite[Theorem 1]{four_on_2015}, 
		\begin{align*}
	\mathbb{E}\left[ 	\mathcal{W}^p_p(\bar{\theta}_t^n,\theta_t)  \right]
	\leq 	 C \Big(\mathbb{E}\left[|X_t|^r\right] \Big)^{\frac{p}{r}}
	\lambda_{p,r,n}(d).
		\end{align*}
		Since \(X\in S^2_{\mathbb{F}}\) and \(r\leq2\),
	\(  \sup\limits_{t\in [0,T]}\Big(\mathbb{E}\left[|X_t|^r\right] \Big)^{\frac{p}{r}}<\infty. \)
The endpoint \(r=2\), subject to the stated critical-exponent exclusions, follows by applying the same empirical-measure estimate directly with the available second moment.
	Summing over \(i\) on both sides of  \eqref{xxi},
	\begin{align*}
			 \frac{1}{n}\sum_{i=1}^{n} \mathbb{E}\left[\sup\limits_{s\in [0,t]}
	|X^i_s-\bar{X}^i_s|^p
	\right]\leq
		 C\int_0^t 
		  \frac{1}{n}\sum_{i=1}^{n} \mathbb{E}\left[\sup\limits_{r\in [0,s]}
		 |X^i_r-\bar{X}^i_r|^p	 \right]ds
		 +C\lambda_{p,r,n}(d), \q\ t\in [0,T].
	\end{align*}
	 Gronwall's inequality gives
			\begin{align}\label{xestimat_1}
		 \frac{1}{n}\sum_{i=1}^{n}\mathbb{E}\left[
		 \sup\limits_{t\in [0,T]}|X_t^i-\bar{X}_t^i|^p\right]
		 \leq 
		 C \lambda_{p,r,n}(d).
			\end{align}
			By exchangeability, \eqref{xestimat_1} implies, for every \(i,\)
			\begin{align*}
		 \mathbb{E}\left[
		 \sup\limits_{t\in [0,T]}|X_t^i-\bar{X}_t^i|^p\right]
		 \leq 
		 C \lambda_{p,r,n}(d).
			\end{align*}
				Let \(\delta>0\) be   chosen later.
		 Then,  from Briand et al. \cite[Proposition 3.2]{bran_lp_03},  
		 \begin{equation}\label{dif}
		\begin{aligned}
			&\ \mathbb{E}\left[ \sup\limits_{t\in [T-\delta,T]}
		  |Y^i_t-\bar{Y}^i_t|^p
			+\Bigg( \int_{T-\delta}^T\Big(  |Z_t^{i,i}-\bar{Z}^i_t|^2
			+\sum_{j\neq i} |Z_t^{i,j}|^2\Big)
			dt \Bigg)^{\frac{p}{2}} 
			\right]\\
			&\leq C
			\mathbb{E}\left[ \sup\limits_{t\in [T-\delta,T]}|X_t^i-\bar{X}^i_t|^p+
			\mathcal{W}^p_p(\theta_T^n, \theta_T)+
			\Bigg( \int_{T-\delta}^T \Big( \mathcal{W}_p(\theta_t^n, \theta_t)+ \mathcal{W}_p(\mu_t^n,\mu_t)
			+\mathcal{W}_p(\nu^n_t, \nu_t) \Big) dt\Bigg)^p \right],
		\end{aligned}
		\end{equation}
		where
		\begin{align*}
		 \mathbb{E}\left[\mathcal{W}_p^p(\theta_T^n,\theta_T)\right]
		 \leq C\lambda_{p,r,n}(d) \q\ \hbox{and} \q\ 
		 \mathbb{E}\left[\Bigg(\int_{T-\delta}^T\mathcal{W}_p(\theta_t^n,\theta_t)dt
		 \Bigg)^p\right]
		 \leq C \delta^p
		 \lambda_{p,r,n}(d).
		\end{align*}
		By the triangle inequality once more,
		\begin{align*}
		&\  
		\mathcal{W}^p_p(\mu_t^n,\mu_t)
		+\mathcal{W}^p_p(\nu^n_t, \nu_t)
	 	\leq 2^p \Big( 
			\mathcal{W}^p_p(\mu_t^n,\bar{\mu}^n_t)
			+\mathcal{W}^p_p(\nu^n_t, \bar{\nu}^n_t)
		 +\mathcal{W}^p_p(\bar{\mu}^n_t,\mu_t)
			+\mathcal{W}^p_p(\bar{\nu}^n_t, \nu_t)\Big).
		\end{align*}
	  H\"older's inequality gives
		\begin{align*}
			\mathbb{E}\left[  
			\Bigg( \int_{T-\delta}^T \mathcal{W}_p(\mu^n_t, \bar{\mu}^n_t) dt\Bigg)^p\right]
			&\leq \delta^{p-1}\int_{T-\delta}^T\mathbb{E}\left[\mathcal{W}^p_p(\mu^n_t, \bar{\mu}^n_t)\right]dt
			 \leq \delta^p\frac{1}{n}\sum_{i=1}^{n}
			\mathbb{E}\left[\sup\limits_{t\in [T-\delta,T]}
			|Y^i_t-\bar{Y}^i_t|^p 
		 \right].
		\end{align*} 
	Similarly,
		\begin{align*}
		\mathbb{E}\left[  
		\Bigg( \int_{T-\delta}^T \mathcal{W}_p(\nu^n_t, \bar{\nu}^n_t) dt\Bigg)^p\right]
			& \leq \delta^{p-1}\frac{1}{n}\sum_{i=1}^{n}
			\mathbb{E} 
			\int_{T-\delta}^T |Z_t^{i,i}-\bar{Z}_t^i |^p dt 
			\leq \delta^{\frac{p}{2}}\frac{1}{n}\sum_{i=1}^{n}  \mathbb{E}\left[ \Bigg( \int_{T-\delta}^T |Z_t^{i,i}-\bar{Z}^i_t|^2dt \Bigg)^{\frac{p}{2}}
			\right].
		\end{align*} 
		Moreover, 	by the result of  Fournier and Guillin \cite[Theorem 1]{four_on_2015},  together with H\"older's inequality,
		\begin{align*}
				\mathbb{E}\left[
		\Big(\int_{T-\delta}^{T} \mathcal{W}_p(\bar{\mu}_t^n,\mu_t)dt
		\Big)^p\right]
		&\leq \delta^{p-1} 
		\int_{T-\delta}^T\mathbb{E}\left[   \mathcal{W}_p^p(\bar{\mu}_t^n,\mu_t) \right]dt\\
		&\leq C \delta^{p-1} \lambda_{p,r,n}(k)
			 \int_{T-\delta}^T \Big(\mathbb{E}\left[|Y_t|^r\right] \Big)^{\frac{p}{r}} dt\\
			 &\leq C\delta^{p}\lambda_{p,r,n}(k)\Bigg(\mathbb{E}\left[
			 \sup\limits_{t\in [T-\delta,T]}|Y_t|^2\right] \Bigg)^{\frac{p}{2}}.
		\end{align*}
		Similarly,
	\begin{align*}
		 	\mathbb{E}\left[
		 	\Big(\int_{T-\delta}^{T} \mathcal{W}_p(\bar{\nu}_t^n,\nu_t)dt
		 	\Big)^p\right]
		 	&\leq \delta^{p-1} 
		 	\int_{T-\delta}^T\mathbb{E}\left[   \mathcal{W}_p^p(\bar{\nu}_t^n,\nu_t) \right]dt\\
		 	&\leq   C\delta^{p-1}\lambda_{p,r,n}(kd)
		 	\int_{T-\delta}^T \Big(\mathbb{E}\left[|Z_t|^r\right] \Big)^{\frac{p}{r}} dt\\
		 	& \leq C\delta^{p-1}\lambda_{p,r,n}(kd)\int_{T-\delta}^T \Big(\mathbb{E}\left[|Z_t|^2\right] \Big)^{\frac{p}{2}} dt\\
		 	& \leq C
		 	\delta^{\frac{p}{2}} \lambda_{p,r,n}(kd)\Bigg( \mathbb{E}\int_{T-\delta}^T|Z_t|^2dt\Bigg)^{\frac{p}{2}}.
	\end{align*}
		Hence,
		\begin{align*}
				\mathbb{E}\left[
			\Big(\int_{T-\delta}^{T} \mathcal{W}_p(\bar{\mu}_t^n,\mu_t)dt
			\Big)^p\right]\leq  C\lambda_{p,r,n}(k) \q\ \hbox{and} \q\ 
\mathbb{E}\left[
\Big(\int_{T-\delta}^{T} \mathcal{W}_p(\bar{\nu}_t^n,\nu_t)dt
\Big)^p\right]\leq
 C\lambda_{p,r,n}(kd).
		\end{align*}
		Averaging \eqref{dif}   over \(i\) and combining the preceding estimates, we obtain
		\begin{align*}
 	&\  \frac{1}{n}\sum_{i=1}^{n}  \mathbb{E}\left[ \sup\limits_{t\in [T-\delta,T]}  |Y^i_t-\bar{Y}^i_t|^p 
 	+\Bigg( \int_{T-\delta}^T\Big(  |Z_t^{i,i}-\bar{Z}^i_t|^2
 	+\sum_{j\neq i} |Z_t^{i,j}|^2\Big)
 	dt \Bigg)^{\frac{p}{2}} 
 	\right]\\
 &\leq C
  \bar{\lambda}_{p,r,n}+ C(\delta^p+\delta^{\frac{p}{2}})
  \frac{1}{n}\sum_{i=1}^{n}  \mathbb{E}\left[ \sup\limits_{t\in [T-\delta,T]}  |Y^i_t-\bar{Y}^i_t|^p 
  +\Bigg( \int_{T-\delta}^T\Big(  |Z_t^{i,i}-\bar{Z}^i_t|^2
  +\sum_{j\neq i} |Z_t^{i,j}|^2\Big)
  dt \Bigg)^{\frac{p}{2}} 
  \right].
		\end{align*}
		Choosing \(\delta>0\) small enough such that \(C(\delta^p+\delta^{\frac{p}{2}})< \frac{1}{2}\)  yields that
		\begin{align*}
		&\  \frac{1}{n}\sum_{i=1}^{n} \mathbb{E}\left[ \sup\limits_{t\in [T-\delta,T]}
	   |Y^i_t-\bar{Y}^i_t|^p 
		+\Bigg( \int_{T-\delta}^T\Big(  |Z_t^{i,i}-\bar{Z}^i_t|^2
		+\sum_{j\neq i} |Z_t^{i,j}|^2\Big)
		dt \Bigg)^{\frac{p}{2}} 
		\right]
		\leq C \bar{\lambda}_{p,r,n}.
		\end{align*}
		Since the coupled system is exchangeable with respect to the particle labels, all summands have the same expectation.
	Next, choose a partition satisfying
		\begin{align*}
			0=t_0<t_1<\dots <t_N=T\q\ \hbox{and} \q\
			\max_{l} (t_{l+1}-t_l)  \leq \delta.
		\end{align*}
	Let
	\begin{align*}
		\Lambda_l=
		\frac{1}{n}\sum_{i=1}^{n}
	\mathbb{E}\left[ \sup\limits_{t\in [t_l,t_{l+1}]}
  |Y^i_t-\bar{Y}^i_t|^p 
	+\Bigg( \int_{t_l}^{t_{l+1}} \Big(  |Z_t^{i,i}-\bar{Z}^i_t|^2
	+\sum_{j\neq i} |Z_t^{i,j}|^2\Big)dt \Bigg)^{\frac{p}{2}} 
	\right].
	\end{align*}
	For each \(0\leq l\leq N-2,\) the preceding estimate yields that
	\begin{align*}
	 \Lambda_l\leq C \frac{1}{n}\sum_{i=1}^{n}
	 \mathbb{E}\left[  
	 |Y^i_{t_{l+1}}-\bar{Y}^i_{t_{l+1}}|^p 
	 \right]+
	 C \bar{\lambda}_{p,r,n} +C(\delta^p+\delta^{\frac{p}{2}}) \Lambda_{l}.
	\end{align*}
	Since \(C(\delta^p+\delta^{\frac{p}{2}})< \frac{1}{2}\), 
	\begin{align*}
	  \Lambda_l\leq C(\Lambda_{l+1}+\bar{\lambda}_{p,r,n}).
	\end{align*}
A finite backward	induction yields that
\begin{align*}
	 \max_{0\leq l\leq N-1} \Lambda_l\leq C \bar{\lambda}_{p,r,n},
\end{align*}
where \(C>0\)   may depend on the number of subintervals \(N\), and hence on \(p, \beta, L_f\) and \(T\), but is independent of the particle number \(n\). By exchangeability, each particle expectation in
  the definition of  \(\Lambda_l\) equals \(\Lambda_l\).
Moreover,
\begin{align*}
	\mathbb{E}\left[   \sup\limits_{t\in [0,T]}
	|Y^i_t-\bar{Y}^i_t|^p 
	\right]
 &	\leq \sum_{l=0}^{N-1}	\mathbb{E}\left[ \sup\limits_{t\in [t_l,t_{l+1}]}
	|Y^i_t-\bar{Y}^i_t|^p \right],\\
	\mathbb{E}\left[  \Bigg( \int_{0}^T\Big(  |Z_t^{i,i}-\bar{Z}^i_t|^2
	+\sum_{j\neq i} |Z_t^{i,j}|^2\Big)
	dt \Bigg)^{\frac{p}{2}} 
	\right]
&	\leq \sum_{l=0}^{N-1}	\mathbb{E}\left[  \Bigg( \int_{t_l}^{t_{l+1}}\Big(  |Z_t^{i,i}-\bar{Z}^i_t|^2
	+\sum_{j\neq i} |Z_t^{i,j}|^2\Big)
	dt \Bigg)^{\frac{p}{2}} 
	\right].
\end{align*} 
	This completes the proof.
	\end{proof}
	
 	We use the path-space Wasserstein metrics defined in
 \autoref{section1.2}, with the Euclidean norm across particle coordinates.
 The preceding coupling estimate gives the following finite-marginal result.
	
	\begin{corollary}\label{coup}\sl 
		Assume that \autoref{lipp} holds with \(p\in (1,2)\), \(q=2\)
		and \(\sigma\) is independent of \(\theta\). For every \(1\leq m \leq n\),
		we have 
		\begin{equation}
		\begin{aligned}
			&\  	\mathcal{W}_{p,\|\cdot\|_{\infty}}^p(\mathbb{P}_{(X^1,\cdots,X^m)},(\mathbb{P}_{X})^{\otimes m})+
			\mathcal{W}_{p,\|\cdot\|_{\infty}}^p(\mathbb{P}_{(Y^1,\cdots,Y^m)},(\mathbb{P}_{Y})^{\otimes m}) 
			+
	 \mathcal{W}_{p,L^2}^p(\mathbb{P}_{(Z^{1,1},\cdots,Z^{m,m})},(\mathbb{P}_{Z})^{\otimes m})\\ 
		&	\leq
			Cm \bar{ \lambda}_{p,r,n},\label{t3.5result}
		\end{aligned}
		\end{equation}
		where 	 the constant
		\(C\) may
		depend on the parameters listed in \autoref{xyzcon}. \(\bar{\lambda}_{p,r,n}=\lambda_{p,r,n}(d)+\lambda_{p,r,n}(k)+\lambda_{p,r,n}(kd)\), 
	where \(\lambda_{p,r,n}(s)\) with \(s\in \{d, k, kd\}\) is defined in \eqref{lamda} and the parameter \(r\) is as specified   in \autoref{xyzcon}. Then
	for each fixed \(m\), the right-hand side of \eqref{t3.5result} tends to zero as \(n\longrightarrow \infty.\)
	\end{corollary}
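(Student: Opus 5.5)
The plan is to use the synchronous coupling already built before \autoref{xyzcon}. On the product space, the family \((\bar X^i,\bar Y^i,\bar Z^i)_{i=1}^m\) consists of i.i.d. copies of the limit \((X,Y,Z)\). Hence the joint law of \((\bar X^1,\ldots,\bar X^m)\) is \((\mathbb P_X)^{\otimes m}\), and the same holds for \(\bar Y\) and \(\bar Z\). So the pairs \(((X^1,\ldots,X^m),(\bar X^1,\ldots,\bar X^m))\), and their analogues for \(Y\) and the diagonal integrands, are admissible couplings of \(\mathbb P_{(X^1,\ldots,X^m)}\) and \((\mathbb P_X)^{\otimes m}\), and so on. Each Wasserstein term in \eqref{t3.5result} is therefore bounded by the corresponding coupling cost. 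For example,
\begin{align*}
\mathcal W^p_{p,\|\cdot\|_\infty}\big(\mathbb P_{(X^1,\ldots,X^m)},(\mathbb P_X)^{\otimes m}\big)
\leq \mathbb E\Big[\sup_{t\in[0,T]}\Big(\sum_{i=1}^m|X^i_t-\bar X^i_t|^2\Big)^{p/2}\Big].
\end{align*}
The \(L^2\)-version for \(Z\) is \(\mathbb E\big[(\int_0^T\sum_{i=1}^m|Z^{i,i}_t-\bar Z^i_t|^2dt)^{p/2}\big]\).

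The only step needing care is passing from the Euclidean norm over \(m\) particles to a sum of single-particle costs without losing more than a factor \(m\). Since \(p/2\leq1\), the map \(a\mapsto a^{p/2}\) is subadditive on \([0,\infty)\). This gives \(\big(\sum_{i=1}^m a_i\big)^{p/2}\leq\sum_{i=1}^m a_i^{p/2}\). Together with \(\sup_t\sum_i\leq\sum_i\sup_t\), this yields
\begin{align*}
\mathbb E\Big[\sup_{t}\Big(\sum_{i=1}^m|X^i_t-\bar X^i_t|^2\Big)^{p/2}\Big]\leq\sum_{i=1}^m\mathbb E\Big[\sup_t|X^i_t-\bar X^i_t|^p\Big].
\end{align*}
For \(Z\), the same subadditivity bounds \(\big(\sum_i\int_0^T|Z^{i,i}_t-\bar Z^i_t|^2dt\big)^{p/2}\) by \(\sum_i\big(\int_0^T|Z^{i,i}_t-\bar Z^i_t|^2dt\big)^{p/2}\). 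Dropping the off-diagonal terms only enlarges the right-hand side of \eqref{converyz}.

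Next we apply \eqref{converyz} for each \(i\leq m\). Its constant is independent of \(n\) and \(i\), so each summand is at most \(C\bar\lambda_{p,r,n}\). Summing over \(i\leq m\) and over the three components gives \(Cm\bar\lambda_{p,r,n}\), which is \eqref{t3.5result}.

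For the final claim, fix \(m\) and check that \(\bar\lambda_{p,r,n}\to0\). In every case of \eqref{lamda}, each term is a negative power of \(n\), possibly times \(\log(1+n)\). The relevant exponents are \(1/2\), \(p/s>0\), and \((r-p)/r>0\), the last because \(r>p\). Hence \(\lambda_{p,r,n}(s)\to0\) for \(s\in\{d,k,kd\}\), and so \(Cm\bar\lambda_{p,r,n}\to0\). No step here is a genuine obstacle. The subadditivity step is the one point where the exponent restriction \(p<2\) is used, and it is what makes the bound linear rather than \(m^{p/2}\)-weighted in the wrong direction.
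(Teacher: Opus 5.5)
Your proposal is correct and follows essentially the paper's proof: synchronous coupling with the i.i.d. limit copies, subadditivity of $a\mapsto a^{p/2}$ to reduce the $m$-particle path costs to a sum of single-particle costs, and then \eqref{converyz} applied for each $i\le m$. One small correction to your closing remark: subadditivity only needs $p\le 2$, and the strict restriction $p<2$ is inherited from \autoref{xyzcon}, where it ensures that a moment parameter $r\in(p,2]$ exists given only second moments.
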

	\begin{proof} 
	\begin{align*}
	&\	\mathcal{W}_{p,\|\cdot\|_{\infty}}^p(\mathbb{P}_{(X^1,\cdots,X^m)},(\mathbb{P}_{X})^{\otimes m})+
		\mathcal{W}_{p,\|\cdot\|_{\infty}}^p(\mathbb{P}_{(Y^1,\cdots,Y^m)},(\mathbb{P}_{Y})^{\otimes m})\\
		&\leq    \sum_{i=1}^{m} \mathbb{E}\left[   \|X^i-\bar{X}^i\|_{\infty}^p+
		\|Y^i-\bar{Y}^i\|_{\infty}^p \right]\\
&\leq   2 \sum_{i=1}^{m} \mathbb{E}\left[ \sup\limits_{t\in [0,T]} \Big(  |X_t^i-\bar{X}_t^i|^p+
|Y_t^i-\bar{Y}_t^i|^p \Big) \right],
	\end{align*}
	and
	\begin{align*}
 &\ \mathcal{W}_{p,L^2}^p(\mathbb{P}_{(Z^{1,1},\cdots,Z^{m,m})},(\mathbb{P}_{Z})^{\otimes m})
 	\leq \sum_{i=1}^{m}\mathbb{E}\left[  
 	\Bigg(\int_0^T |Z_t^{i,i}-\bar{Z}_t^i|^2dt\Bigg)^{\frac{p}{2}} \right].
	\end{align*}
	Combining the preceding coupling bounds with \autoref{xyzcon}  yields \eqref{t3.5result}.
	\end{proof}
	
		The empirical Wasserstein rates in  \autoref{coup} depend on the state dimensions. 
		Finite-dimensional statistics give a different estimate and
	also permit measure dependence in the diffusion coefficient.
	
		\begin{assumption}\label{nodimen}\rm 
			  There exist  positive integers
				\(r_b, r_\sigma,r_h,r_x,r_y,r_z\), measurable functions:
				\begin{align*}
&	\phi_{b}: \mathbb{R}^d	\longmapsto \mathbb{R}^{r_{b}},\q\ 
		\phi_{h}: \mathbb{R}^d	\longmapsto \mathbb{R}^{r_{h}},\q\ 
		\phi_\sigma:\mathbb{R}^d \longmapsto \mathbb{R}^{r_\sigma},\\
	&	\phi_{x}: \mathbb{R}^d	\longmapsto \mathbb{R}^{r_{x}},\q\ 
			\phi_y:\mathbb{R}^k\longmapsto \mathbb{R}^{r_y}, \q\ 
		\phi_z:\mathbb{R}^{k\times d}\longmapsto \mathbb{R}^{r_z},
				\end{align*} 
					and given the functions
					\(\hat{b}:[0,T]\times \Omega\times  \mathbb{R}^d\times \mathbb{R}^{r_b}
					\longmapsto \mathbb{R}^d\), 
					\(\hat{\sigma}:[0,T]\times \Omega\times  \mathbb{R}^d\times \mathbb{R}^{r_\sigma} \longmapsto \mathbb{R}^{d\times d}
					\), and 	\(\hat{f}:[0,T]\times
					\Omega \times \mathbb{R}^d \times \mathbb{R}^{k}
					\times\mathbb{R}^{k\times d} \times \mathbb{R}^{r_x}\times \mathbb{R}^{r_y}\times \mathbb{R}^{r_z}  
					\longmapsto \mathbb{R}^k\) are all progressively measurable
					with respect to \((t,\omega)\),
					\( \hat{h}: \Omega \times \mathbb{R}^d \times \mathbb{R}^{r_h}\longmapsto \mathbb{R}^k\) is 
					\(\mathscr{F}_T\otimes \mathcal{B}(\mathbb{R}^d)\otimes \mathcal{B}(\mathbb{R}^{r_h})\)-measurable,
					 such that
			\begin{align*}
 b(t,\omega,x,\theta)&=\hat{b}(t,\omega,x,\langle \phi_b,\theta\rangle),\\
 \sigma(t,\omega,x,\theta)&=\hat{\sigma}(t,\omega,x,\langle \phi_\sigma,\theta\rangle),\\
 h(\omega,x,\theta)&=\hat{h}(\omega,x,\langle \phi_h,\theta\rangle),\\
 f(t,\omega,x,\theta,y,\mu,z,\nu)&=\hat{f}(t,\omega,x,y,z,\langle \phi_x,\theta\rangle,\langle \phi_y,\mu\rangle,
 \langle\phi_z,\nu\rangle),
			\end{align*}
			where for \( \rho\in \mathcal{P}_1(E_\alpha) \) 
			 with \(\alpha\in \{b,\sigma,h,x,y,z\}\), and 
			 	\(E_b=E_\sigma=E_h=E_x=\mathbb{R}^d, 
			E_y=\mathbb{R}^k, E_z=\mathbb{R}^{k\times d}\),
			 \(\langle\phi_{\alpha},\rho \rangle \triangleq 
			\int_{E_\alpha} \phi_{\alpha}(u)\rho(du),\)
			The functions
			\(\phi_{\cdot}\) are globally Lipschitz with constant \(C_{\phi}\).
		Moreover, there exist
			  constants \(\hat{L}_b,\hat{L}_\sigma,\hat{L}_h, \hat{L}_f\) such that
			\begin{align*}
	|\hat{b}(t,\omega,x,a)-\hat{b}(t,\omega,x,a')|& \leq \hat{L}_b|a- a'|,\\
		|\hat{\sigma}(t,\omega,x,a)-\hat{\sigma}(t,\omega,x,a')|& \leq \hat{L}_\sigma|a- a'|,\\
	|\hat{h}(\omega,x,a)-\hat{h}(\omega,x,a')|
	&\leq \hat{L}_h|a-a'|,\\
	|\hat{f}(t,\omega,x,y,z,a_x,a_y,a_z)
	-\hat{f}(t,\omega,x,y,z,a_x',a_y',a_z')|
	&\leq\hat{L}_f \Big( |a_x-a_x'|+|a_y-a_y'|+|a_z-a_z'|
	\Big).
			\end{align*}
			All the corresponding constants and the integers \(r_{\cdot}\) are independent of \(n, d, k.\)
	\end{assumption} 
	
		\begin{theorem}\label{nodi}\sl 
		Assume that \autoref{lipp}  holds for  \(p\in \left(1,2\right]\) with \(q=2\),
		and that
	  \autoref{nodimen} holds.  
		For every \(1\leq m \leq n\),
		we have 
		\begin{equation}
			\begin{aligned}
				&\  	\mathcal{W}_{2,\|\cdot\|_{\infty}}^2(\mathbb{P}_{(X^1,\cdots,X^m)},(\mathbb{P}_{X})^{\otimes m})+
				\mathcal{W}_{2,\|\cdot\|_{\infty}}^2(\mathbb{P}_{(Y^1,\cdots,Y^m)},(\mathbb{P}_{Y})^{\otimes m}) 
				+
				\mathcal{W}_{2,L^2}^2(\mathbb{P}_{(Z^{1,1},\cdots,Z^{m,m})},(\mathbb{P}_{Z})^{\otimes m})\\ 
				&	\leq \frac{C
			m}{n}, \label{nodim_result}
			\end{aligned}
		\end{equation}
		where \(C>0\) depends on 
		\(\beta, L_b,L_\sigma,L_h,L_f,K_2,r_b,r_\sigma,r_h,r_x,r_y,r_z,\hat{L}_b,\hat{L}_\sigma,\hat{L}_h,\hat{L}_f, C_{\phi}\) and \(T\). 
		The constant is independent of \(m,n\). 
		If the listed quantities, including
		\(K_2\), are bounded uniformly as \(d, k\) vary, then \(C\) is also independent 	of \(d,k\).
		Furthermore, 
			\begin{equation}
			\begin{aligned}
				&\  	\mathcal{W}_{p,\|\cdot\|_{\infty}}^p(\mathbb{P}_{(X^1,\cdots,X^m)},(\mathbb{P}_{X})^{\otimes m})+
				\mathcal{W}_{p,\|\cdot\|_{\infty}}^p(\mathbb{P}_{(Y^1,\cdots,Y^m)},(\mathbb{P}_{Y})^{\otimes m}) 
				+
				\mathcal{W}_{p,L^2}^p(\mathbb{P}_{(Z^{1,1},\cdots,Z^{m,m})},(\mathbb{P}_{Z})^{\otimes m})\\ 
				&	\leq C_p
				\big( \frac{m}{n}\big)^{\frac{p}{2}}, \label{nodim_resultforp}
			\end{aligned}
		\end{equation}
		where \(C_p>0\) depends only on \(p\) and on the 
	parameters entering \(C\), and is dimension-free under the same uniformity conditions.
				\end{theorem}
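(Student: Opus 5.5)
We argue by synchronous coupling with the i.i.d. copies \((\bar X^i,\bar Y^i,\bar Z^i)\) of \eqref{xi}--\eqref{yzi}, exactly as in the proof of \autoref{xyzcon}, but in \(L^2\) and with the empirical Wasserstein errors replaced by errors of empirical averages of the statistics. The key point is that under \autoref{nodimen} the measure dependence enters only through \(\langle\phi_\alpha,\cdot\rangle\). Hence, for example,
\begin{align*}
|\langle\phi_b,\theta_t^n\rangle-\langle\phi_b,\theta_t\rangle|
\leq \frac{C_\phi}{n}\sum_{j=1}^n|X_t^j-\bar X_t^j|
+\Big|\frac1n\sum_{j=1}^n\big(\phi_b(\bar X_t^j)-\mathbb E\phi_b(X_t)\big)\Big|.
\end{align*}
The last term is an average of i.i.d. centered vectors in \(\mathbb R^{r_b}\). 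Its second moment is at most \(\frac1n\mathbb E|\phi_b(X_t)-\mathbb E\phi_b(X_t)|^2\leq \frac{C_\phi^2}{n}\mathrm{Var}(X_t)\), which is bounded by \(C/n\) using the \(S^2\) bound from \autoref{exanduni}. No dimension enters here, because we use the variance directly rather than a Fournier--Guillin rate.

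\textbf{Forward step.} We apply It\^o's formula to \(|X_t^i-\bar X_t^i|^2\), together with BDG and the Lipschitz bounds on \(b,\hat b,\sigma,\hat\sigma\). Measure dependence in \(\sigma\) is now harmless, since \(\sigma\) differs only through \(|\langle\phi_\sigma,\theta^n_t\rangle-\langle\phi_\sigma,\theta_t\rangle|\). We then average over \(i\), use \((\frac1n\sum_j|a_j|)^2\leq\frac1n\sum_j|a_j|^2\), and apply Gronwall. This yields \(\frac1n\sum_i\mathbb E\|X^i-\bar X^i\|_\infty^2\leq C/n\), and the same bound for each \(i\) by exchangeability.

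\textbf{Backward step.} We use the standard \(L^2\) a priori estimate for the \(\mathbb R^{nk}\)-valued BSDE, with monotonicity constant \(\beta\) and \(z\)-Lipschitz constant \(L_f\), applied to the difference of \((Y^i,(Z^{i,j})_j)\) and \((\bar Y^i,\bar Z^{i,j})\). We apply It\^o to \(|Y^i_t-\bar Y^i_t|^2\) and sum over \(i\). The quadratic variation term gives \(\sum_i\int(|Z^{i,i}-\bar Z^i|^2+\sum_{j\ne i}|Z^{i,j}|^2)\). The generator difference is split into the monotone \(y\)-part, the Lipschitz \((x,z)\)-part, and the statistic differences \(|\langle\phi_x,\theta^n\rangle-\langle\phi_x,\theta\rangle|\), \(|\langle\phi_y,\mu^n\rangle-\langle\phi_y,\mu\rangle|\), \(|\langle\phi_z,\nu^n\rangle-\langle\phi_z,\nu\rangle|\). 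Each statistic difference is bounded as in the display above: a coupling part \(\frac{C}{n}\sum_j|Y^j-\bar Y^j|\) (respectively \(|Z^{j,j}-\bar Z^j|\)), plus an i.i.d. fluctuation of second moment \(\le C\,\mathrm{Var}(Y_t)/n\) (respectively \(\le C\,\mathbb E|Z_t|^2/n\), integrable in \(t\)). The \(Z\)-coupling part enters only linearly against \(|Y^i-\bar Y^i|\), so Young's inequality absorbs it into the left side with a factor \(\tfrac12\). The terminal term is handled with \(\hat L_h\). Gronwall in backward time then gives
\[
\frac1n\sum_i\mathbb E\Big[\sup_t|Y_t^i-\bar Y_t^i|^2+\int_0^T\Big(|Z^{i,i}_t-\bar Z^i_t|^2+\sum_{j\neq i}|Z^{i,j}_t|^2\Big)dt\Big]\leq \frac Cn,
\]
where the supremum is obtained through BDG. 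Unlike \autoref{xyzcon}, no small-interval partition is needed, since everything is \(L^2\) and linear in the averages.

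\textbf{Conclusion.} By exchangeability each summand is bounded by \(C/n\). Using the couplings \((X^i,\bar X^i)_{i\le m}\), and likewise for \(Y\) and \(Z\), as in \autoref{coup}, we sum over \(i\le m\) to obtain \eqref{nodim_result}. The constants depend only on the Lipschitz constants, \(C_\phi\), \(K_2\) and \(T\), which gives the dimension-independence claim.

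For \eqref{nodim_resultforp}, we use \(\mathcal W_p\le\mathcal W_2\) on each path space together with \eqref{nodim_result}: \(\mathcal W_p^p\le(\mathcal W_2^2)^{p/2}\le (Cm/n)^{p/2}\). The three terms are then combined with a constant depending on \(p\).

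The main obstacle we expect is the \(\nu^n\) term in the backward step. It requires that the \(Z\)-fluctuation \(\mathbb E|\frac1n\sum_j(\phi_z(\bar Z^j_t)-\mathbb E\phi_z(Z_t))|^2\) be integrable in time with bound \(C/n\). This holds because \(\phi_z\) is Lipschitz, so \(|\phi_z(z)|\le C(1+|z|)\), and \(Z\in L^2_{\mathbb F}\). The coupling part also has to be absorbed without losing a factor \(n\); the \(\frac1n\sum_j\) structure combined with exchangeability is what makes this work.
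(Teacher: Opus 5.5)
Your proposal is correct and follows essentially the paper's proof. Both use synchronous coupling with the i.i.d. copies, split each statistic error into a coupling part plus an i.i.d. fluctuation of second moment $O(1/n)$, apply Gronwall in the forward step and $L^2$ BSDE stability in the backward step, and finish with exchangeability and $\mathcal W_p\leq\mathcal W_2$.

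The only real difference is in the backward step. The paper, mirroring \autoref{xyzcon}, applies the stability estimate on short subintervals and then runs a backward induction over a partition. Your global It\^o--Young--Gronwall argument on $[0,T]$ is the same device the paper uses in \autoref{empirical}, and it correctly shows that the partition is unnecessary in $L^2$.
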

	\begin{proof}
		For brevity, we use  the notation introduced   in the proof of \autoref{xyzcon} and the constant \(C\) in the following may vary from line to line.
 By \autoref{exanduni} and the finite-dimensional well-posedness argument given in the proof of 
  \autoref{xyzcon}, both the limiting and  particle systems admit unique square-integrable solutions. 
  From the triangle inequality,
 \begin{align*}
  \Big| b(t,X_t^i,\theta_t^n)-b(t,\bar{X}_t^i,\theta_t)\Big|
  &\leq
  \Big| b(t,X_t^i,\theta_t^n)-b(t,\bar{X}_t^i,\bar{\theta}^n_t) \Big|
  +\Big| b(t,\bar{X}_t^i,\bar{\theta}_t^n)-b(t,\bar{X}_t^i,\theta_t)\Big|.
 \end{align*}
 By the Lipschitz continuity of \(b\),
 \begin{align*}
\mathbb{E}\left[\Bigg(\int_0^T  \Big| b(t,X_t^i,\theta_t^n)-b(t,\bar{X}_t^i,\bar{\theta}^n_t) \Big| dt
\Bigg)^2\right]
\leq C \mathbb{E}\left[\sup\limits_{t\in [0,T]}
|X_t^i-\bar{X}_t^i|^2\right]
+\frac{C}{n}\sum_{i=1}^{n} \mathbb{E}\left[\sup\limits_{t\in [0,T]}
|X_t^i-\bar{X}_t^i|^2\right].
 \end{align*}
 Since \(X\in S^2_{\mathbb{F}}([0,T];\mathbb{R}^d)\) and \(\phi_b\) is globally Lipschitz,
 \begin{align*}
 \mathbb{E}\left[ \Big| 
 \frac{1}{n}\sum_{j=1}^{n}\phi_b(\bar{X}_t^j)-\mathbb{E}\left[\phi_b(X_t)\right]\Big|^2\right] 
 &\leq 
  \frac{1}{n}\mathbb{E}\left[ \Big| 
 \phi_b(X_t)-\mathbb{E}\left[\phi_b(X_t)\right]\Big|^2\right].
 \end{align*}
 For the second term,  the structural representation of \(b\) and H\"older's inequality give
 \begin{align*}
  \mathbb{E}\left[\Bigg(\int_0^T  \Big| b(t,\bar{X}_t^i,\bar{\theta}_t^n)-b(t,\bar{X}_t^i,\theta_t) \Big| dt
  \Bigg)^2\right]
 & \leq 
  C \int_0^T \mathbb{E}\left[ \Big| b(t,\bar{X}_t^i,\bar{\theta}_t^n)-b(t,\bar{X}_t^i,\theta_t) \Big|^2\right] dt \\
  &\leq 
  C \int_0^T \mathbb{E}\left[ \Big| 
  \frac{1}{n}\sum_{j=1}^{n}\phi_b(\bar{X}_t^j)-\mathbb{E}\left[\phi_b(X_t)\right]\Big|^2\right] dt \\
  &\leq 
 \frac{C}{n}.
 \end{align*}
 Similarly,
 \begin{align*}
	\Big| \sigma(t,X_t^i,\theta_t^n)-\sigma(t,\bar{X}_t^i,\theta_t)\Big|
	&\leq
	\Big| \sigma(t,X_t^i,\theta_t^n)-\sigma(t,\bar{X}_t^i,\bar{\theta}^n_t) \Big|
	+\Big| \sigma(t,\bar{X}_t^i,\bar{\theta}_t^n)-\sigma(t,\bar{X}_t^i,\theta_t)\Big|.
\end{align*}
By the Lipschitz continuity and the structural representation of \(\sigma\), 
together with the Burkholder--Davis--Gundy inequality,
\begin{align*}
&\  \mathbb{E}\left[\sup\limits_{t\in [0,T]}
 \Bigg|\int_0^t \Big( \sigma(s,X_s^i,\theta_s^n)-\sigma(s,\bar{X}_s^i,\theta_s)\Big)dW_s^i \Bigg|^2
 \right]\\
 &\leq 
 C \mathbb{E} \int_0^T \Big(\big| \sigma(s,X_s^i,\theta_s^n)-\sigma(s,\bar{X}_s^i,\bar{\theta}^n_s) \big|^2
 +
 \big| \sigma(s,\bar{X}_s^i,\bar{\theta}_s^n)-\sigma(s,\bar{X}_s^i,\theta_s)\big|^2 \Big)ds\\
 &\leq C \mathbb{E}\left[\sup\limits_{t\in [0,T]}
 |X_t^i-\bar{X}_t^i|^2\right]
 +\frac{C}{n}\sum_{i=1}^{n} \mathbb{E}\left[\sup\limits_{t\in [0,T]}
 |X_t^i-\bar{X}_t^i|^2\right]+\frac{C}{n}.
\end{align*}
For each \(t\leq T\), the same estimates on \([0,t]\), with the
time integrals retained, give
\begin{align*}
	e_i(t)&\leq C\int_0^t\Big(e_i(s)+\frac1n\sum_{j=1}^n e_j(s)\Big)ds
	+\frac Cn,\\
	e_i(t)&\triangleq \mathbb E\sup_{0\leq u\leq t}|X_u^i-\bar X_u^i|^2.
\end{align*}
Averaging over \(i\) and applying
Gronwall's inequality gives
	\begin{align}\label{nodimforx}
	\frac{1}{n}\sum_{i=1}^{n}\mathbb{E}\left[
	\sup\limits_{t\in [0,T]}|X_t^i-\bar{X}_t^i|^2\right]
	\leq \frac{C}{n}.
\end{align}
Applying the same argument   to estimate
the coefficient \(h\),
\begin{align*}
\mathbb{E}\left[ 
|h(X_T^i, \theta_T^n)-
 h(\bar{X}_T^i,\theta_T)|^2 \right] 
 \leq C\mathbb{E}\left[
 |X_T^i-\bar{X}_T^i|^2+\frac{1}{n}\sum_{i=1}^{n}|X_T^i-\bar{X}_T^i|^2\right]
+\frac{C}{n},
\end{align*}
which implies that
\begin{align*}
	 	\frac{1}{n}\sum_{i=1}^{n}
	 	\mathbb{E}\left[ 
	 	|h(X_T^i, \theta_T^n)-
	 	h(\bar{X}_T^i,\theta_T)|^2 \right] 
	 	\leq \frac{C}{n}.
\end{align*}
Moreover,
\begin{align*}
  &\  \Big| f(t,X_t^i,\theta_t^n,
\bar{Y}_t^i,\mu_t^n,\bar{Z}_t^{i,i}, \nu_t^n)-f(t,\bar{X}_t^i,\theta_t,
 \bar{Y}_t^i,\mu_t,\bar{Z}_t^{i,i},\nu_t) \Big| \\
 &\leq 
 \Big| f(t,X_t^i,\theta_t^n,
 \bar{Y}_t^i,\mu_t^n,\bar{Z}_t^{i,i}, \nu_t^n)-f(t,\bar{X}_t^i,\bar{\theta}^n_t,
\bar{Y}_t^i,\bar{\mu}^n_t,\bar{Z}_t^{i,i},\bar{\nu}^n_t)\Big| \\
&
 +\Big|f(t,\bar{X}_t^i,\bar{\theta}^n_t,
 \bar{Y}_t^i,\bar{\mu}^n_t,\bar{Z}_t^{i,i},\bar{\nu}^n_t)-f(t,\bar{X}_t^i,\theta_t,
 \bar{Y}_t^i,\mu_t,\bar{Z}_t^{i,i},\nu_t) \Big| \\
& \triangleq A_t^i+B_t^i,
\end{align*}
where by \autoref{lipp},
\begin{align*}
  |A_t^i|^2
\leq&\
C\Big(  |X_t^i-\bar{X}_t^i|^2+\frac{1}{n}\sum_{i=1}^{n}|X_t^i-\bar{X}_t^i|^2
+\frac{1}{n}\sum_{i=1}^{n}|Y_t^i-\bar{Y}_t^i|^2
 +\frac{1}{n}\sum_{i=1}^{n}|Z_t^{i,i}-\bar{Z}_t^{i,i}|^2\Big).
\end{align*}
Since
\begin{align*}
&\  \mathbb{E}\left[ \Big| 
\frac{1}{n}\sum_{i=1}^{n}\phi_x(\bar{X}_t^i)-\mathbb{E}\left[\phi_x(X_t)\right]\Big|^2+
 \Big| 
\frac{1}{n}\sum_{i=1}^{n}\phi_y(\bar{Y}_t^{i})-\mathbb{E}\left[\phi_y(Y_t)\right]\Big|^2
+ \Big| 
\frac{1}{n}\sum_{i=1}^{n}\phi_z(\bar{Z}_t^{i,i})-\mathbb{E}\left[\phi_z(Z_t)\right]\Big|^2\right] \\
&\leq 
\frac{1}{n}\mathbb{E}\left[ \Big| 
\phi_x(X_t)-\mathbb{E}\left[\phi_x(X_t)\right]\Big|^2+
\Big| 
\phi_y(Y_t)-\mathbb{E}\left[\phi_y(Y_t)\right]\Big|^2
+\Big| 
\phi_z(Z_t)-\mathbb{E}\left[\phi_z(Z_t)\right]\Big|^2\right].
\end{align*}
Hence,
by the structural condition on \(f\),
\begin{align*}
 \frac{1}{n}\sum_{i=1}^{n} \mathbb{E}
 \int_0^T |B_t^i|^2dt \leq \frac{C}{n}.
\end{align*}
Therefore, choose  \(\delta>0\) such that
\(C(\delta+\delta^2)\leq \frac{1}{2}\) and
 a partition satisfying
\begin{align*}
	0=t_0<t_1<\dots <t_N=T\q\ \hbox{and} \q\
	\max_{l} (t_{l+1}-t_l)  \leq \delta.
\end{align*}
Let
\begin{align*}
	\Lambda_l=
	\frac{1}{n}\sum_{i=1}^{n}
	\mathbb{E}\left[ \sup\limits_{t\in [t_l,t_{l+1}]}
	|Y^i_t-\bar{Y}^i_t|^2
	+ \int_{t_l}^{t_{l+1}} \Big(  |Z_t^{i,i}-\bar{Z}^i_t|^2
	+\sum_{j\neq i} |Z_t^{i,j}|^2\Big)dt 
	\right].
\end{align*}
 Using the stability estimate of BSDEs as in the proof of \autoref{xyzcon},
\begin{equation} 
	\begin{aligned}\label{nodimforyandz}
\Lambda_{N-1}
&\leq \frac{C}{n}+C(\delta+\delta^2)\Lambda_{N-1}.
\end{aligned}
\end{equation}
For every \(0\leq l\leq N-2\), 
\begin{align*}
 \Lambda_{l}
 &\leq C\frac{1}{n}\sum_{i=1}^{n}
 \mathbb{E}\left[ |Y_{t_{l+1}}^i-\bar{Y}_{t_{l+1}}^i|^2\right]
 +\frac{C}{n}+C(\delta+\delta^2)\Lambda_l
 \\
 &\leq C\Lambda_{l+1}+\frac{C}{n}+C(\delta+\delta^2)\Lambda_{l}.
\end{align*}
Since
\(C(\delta+\delta^2)\leq \frac{1}{2}\),
the same backward induction argument  as in \autoref{xyzcon} yields that
\begin{align*}
	\max_{0\leq l\leq N-1}
\Lambda_l
\leq \frac{C}{n}.
\end{align*}
Furthermore,
\begin{align}\label{nodimforyz}
	\frac{1}{n}\sum_{i=1}^{n}
\mathbb{E}\left[ \sup\limits_{t\in [0,T]}
|Y^i_t-\bar{Y}^i_t|^2
+ \int_0^T\Big(  |Z_t^{i,i}-\bar{Z}^i_t|^2
+\sum_{j\neq i} |Z_t^{i,j}|^2\Big)dt 
\right]
\leq \sum_{l=0}^{N-1}\Lambda_l\leq \frac{C}{n}.
\end{align}
Combining \eqref{nodimforx} with \eqref{nodimforyz}
and using  exchangeability,  we obtain
\begin{align*}
 \mathbb{E}\left[
 \sup\limits_{t\in [0,T]}\Big(|X_t^i-\bar{X}_t^i|^2 +
|Y^i_t-\bar{Y}^i_t|^2\Big)
+ \int_{0}^{T}\Big(  |Z_t^{i,i}-\bar{Z}^i_t|^2
+\sum_{j\neq i} |Z_t^{i,j}|^2\Big)
dt  
\right]\leq  \frac{C}{n}.
\end{align*}
Finally,
	\begin{align*}
	&\	\mathcal{W}_{2,\|\cdot\|_{\infty}}^2(\mathbb{P}_{(X^1,\cdots,X^m)},(\mathbb{P}_{X})^{\otimes m})+
	\mathcal{W}_{2,\|\cdot\|_{\infty}}^2(\mathbb{P}_{(Y^1,\cdots,Y^m)},(\mathbb{P}_{Y})^{\otimes m})+\mathcal{W}_{2,L^2}^2(\mathbb{P}_{(Z^{1,1},\cdots,Z^{m,m})},(\mathbb{P}_{Z})^{\otimes m})\\
	&\leq   2 \sum_{i=1}^{m} \mathbb{E}\left[ \sup\limits_{t\in [0,T]} \Big(  |X_t^i-\bar{X}_t^i|^2+
	|Y_t^i-\bar{Y}_t^i|^2 \Big) \right]
	+\sum_{i=1}^{m}\mathbb{E} 
	\int_0^T |Z_t^{i,i}-\bar{Z}_t^i|^2dt,
\end{align*}
which implies \eqref{nodim_result}.
Finally, \eqref{nodim_resultforp} follows from
\(\mathcal W_p\leq\mathcal W_2\) and
\(a^{\frac{p}{2}}+b^{\frac{p}{2}}+c^{\frac{p}{2}}\leq3^{1-\frac{p}{2}}(a+b+c)^{\frac{p}{2}}\).
This completes the proof.
	\end{proof}

		\section{Sharp propagation of chaos in 
		Wasserstein distance} \label{sec4}
		
		Throughout this section,  we restrict the general model introduced in 
		\autoref{sec1} to the deterministic Markovian setting. More precisely,
			the coefficients \(b, f\) and \(h\) are  
		deterministic and the diffusion coefficient is a constant matrix:
		\begin{align*}
 \sigma(t,x,\mu)\triangleq \Sigma,\q\ 
 \Sigma \in \mathbb{R}^{d\times d}.
		\end{align*} 
		  \autoref{lipp} is imposed with \(p=q=2\).
		The finite-dimensional well-posedness argument in the proof of \autoref{xyzcon} also applies with \(p=2\), and does not use the law-independence of \(\sigma\). Hence the finite-particle BSDE admits a unique square-integrable solution.
		All additional assumptions required for a given result will be stated explicitly.
	Set 
		\begin{align*}
		\theta_t=\mathbb{P}_{X_t}\q\ \hbox{and} \q\ 
		\theta_t^n=\frac{1}{n}\sum_{i=1}^{n}\delta_{X_t^i}.
		\end{align*}
	We use the following mixed-derivative notation.
		Let \(w, l\in \mathbb{N}_0\) be two non-negative integers,
		\(\overrightarrow{c}=(c_1,\cdots,c_w)\) be a vector such that 
	for each \(j=1,\cdots,w\), \(c_j\in \mathbb{N}_0\).
	Denote \(\alpha=(l,\overrightarrow{c})\) with
	\(|\alpha|\triangleq l+\sum_{j=1}^{w}c_j\).
		For a map \(u :\mathbb{R}^d\times  \mathcal{P}_2(\mathbb{R}^d)
		\longmapsto \mathbb{R}^d\), define the
	 mixed-derivative:
	\begin{align*}
	 D^{w,\alpha}u(x,\mu,v_1,\cdots,v_w)
	 \triangleq \triangledown_{v_1}^{c_1} \cdots \triangledown_{v_w}^{c_w}
	 D_\mu^w \triangledown_x^l u(x,\mu,v_1,\cdots,v_w),
	\end{align*}
where for each fixed \(\mu,\)  the map
\begin{align*}
 (x,v_1,\cdots,v_w)\longmapsto 
 D^{w,\alpha}u(x,\mu,v_1,\cdots,v_w) 
\end{align*}
  takes values in
	the corresponding finite-dimensional tensor space,   equipped with its Euclidean norm.
		Furthermore, denote by \(C_{bd}^k( \mathbb{R}^d\times \mathcal{P}_2(\mathbb{R}^d))\)   the set of functions \(u:   \mathbb{R}^d \times \mathcal{P}_2(\mathbb{R}^d)\longmapsto \mathbb{R}^d\) such that for every multi-index \((w,\alpha)=(w,l,\overrightarrow{c})\) with \(0< w+|\alpha|\leq k\), 
		the derivative \(D^{w,\alpha}u\)
		exists and is bounded. For every \((x,x',\mu,\mu',v,v')
		\in \mathbb{R}^d\times \mathbb{R}^d \times \mathcal{P}_2(\mathbb{R}^d)\times \mathcal{P}_2(\mathbb{R}^d)\times  
		(\mathbb{R}^d)^w\times 
	(\mathbb{R}^d)^w,\)
		\begin{align*}
		| D^{w,\alpha}u(x,\mu,v)-D^{w,\alpha}u(x',\mu',v')|
		\leq C\Big( |x-x'|+\mathcal{W}_2(\mu,\mu')+\sum_{j=1}^{w}|v_j-v_j'|\Big).
		\end{align*}

	Now we   give a simple example, which shows that the order \(\mathcal{W}_2^2=O(m/n)\) cannot, in general, be improved under the assumptions of
		 \autoref{sec3}.
		 
		\begin{example}\label{ex}\sl 
		 Let \(d=k=1\), 
		 \begin{align*}
		 	X_0^i=0,\q\ 
X_t^i=W_t^i,\q\ 
b=0,\q\  \sigma=1,
		 \end{align*}
	where \(X^1,\cdots,X^n\) are i.i.d., and for all \(1\leq m\leq n,\)
		 we have 
		 \begin{align*}
		  \mathcal{W}_{2,\|\cdot\|_{\infty}}^2(
		  \mathbb{P}_{(X^1,\cdots,X^m)},(\mathbb{P}_X)^{\otimes m})=0.
		 \end{align*}
For any \((t,x,\theta,y,\mu,z, \nu)\in [0,T]\times \mathbb{R} \times 
\mathcal{P}_p(\mathbb{R})
\times \mathbb{R} \times 
\mathcal{P}_p(\mathbb{R})\times \mathbb{R} \times 
\mathcal{P}_p(\mathbb{R})\),
\begin{align*}
f(t,x,\theta,y,\mu,z,\nu)=0\q\ \hbox{and} \q\ 
h(x,\theta)=\int_{\mathbb{R}}u \theta(du ),
\end{align*}		 
where the coefficients  \(h\) and \(f\) satisfy \autoref{lipp} with \(q=2\), and \autoref{nodimen} with \(\phi_{h}(u)=u\) and \(\hat{h}(x,a)=a\).
Moreover,  \(h(X_T,\mathbb{P}_{X_T})=\mathbb{E}\left[X_T\right]=0,\)
\(Y=Z=0\).
For each  \(i=1,\cdots, n,\) \(h(X_T^i,\theta_T^n)=\frac{1}{n}\sum_{j=1}^{n}W_T^j\), and 
\begin{align*}
 Y_t^i=\mathbb{E}\left[\frac{1}{n}\sum_{j=1}^{n}W_T^j ~\Big|~ 
 \mathscr{F}_t^n
   \right]=\frac{1}{n}\sum_{j=1}^{n}W_t^j\q\ \hbox{and}\q\ 
   Z_t^{i,j}=\frac{1}{n},\q 1\leq i,j\leq n.
\end{align*}
Let \(\overline{W}_t^n=\frac{1}{n}\sum_{j=1}^{n}W_t^j\),
since  \(\mathbb{P}_Y=\delta_0\), we have
\begin{align*}
\mathcal{W}_2^2(\mathbb{P}_{(Y_t^1,\cdots,Y_t^m)},\delta_0^{\otimes m})
=m\mathbb{E}\left[|\overline{W}_t^n|^2\right]=\frac{mt}{n},\q\
t\in [0,T].
\end{align*}
Moreover, \(\sqrt{n}\overline{W}^n \overset{\mathrm{d}}{=} W\), 
for all \(1\leq m\leq n,\)  
\begin{align*}
	\mathcal{W}_{2,\|\cdot\|_{\infty}}^2(
	\mathbb{P}_{(Y^1,\cdots,Y^m)},\delta_0^{\otimes m})
	= \frac{m}{n}\mathbb{E}\left[\sup\limits_{t\in [0,T]}|W_t|^2\right],
\end{align*}
hence we obtain
\begin{align*}
 \frac{mT}{n}\leq 
 	\mathcal{W}_{2,\|\cdot\|_{\infty}}^2(
 \mathbb{P}_{(Y^1,\cdots,Y^m)},\delta_0^{\otimes m})
 \leq \frac{4mT}{n}.
\end{align*}

For comparison, change the terminal condition to
\(h(x,\mu)=x+\int_{\mathbb{R}} v\mu(dv)\), keeping \(b=f=0\), \(X_0=0\), and \(\Sigma=1\).
Then \(U(t,x,\mu)=x+\int_{\mathbb{R}} v\mu(dv)\), and
\begin{align*}
 Y_t^i=W_t^i+\overline W_t^n,\q\
  Z_t^{i,j}=\mathbf1_{\{i=j\}}+n^{-1},
 \q\  Y_t=W_t,\q\  Z_t=1.
\end{align*}
Set
\(	\mathbf{1}_m=(1,\cdots,1)^\top,
I_m=\operatorname{diag}(1, 1, \dots, 1)\).
The covariance of the \(m\)-particle value vector at time \(t\) is
\(t(I_m+3n^{-1}\mathbf1_m\mathbf1_m^\top)\).
Diagonalizing this matrix gives
\begin{align*}
 \mathcal W_2^2(\mathbb P_{(Y_t^1,\ldots,Y_t^m)},\mathcal N(0,tI_m))
 =t\left(\sqrt{1+3m/n}-1\right)^2.
\end{align*}
To bound the path distance, couple a standard \(m\)-dimensional Brownian motion
\(B\) with \(\big(I_m+(\sqrt{1+3m/n}-1)uu^\top\big)B\), where
\(u=m^{-1/2}\mathbf1_m\). Terminal evaluation and Doob's inequality give
\begin{align*}
 T\left(\sqrt{1+3m/n}-1\right)^2
 &\leq\mathcal W_{2,\|\cdot\|_\infty}^2
 (\mathbb P_{(Y^1,\ldots,Y^m)},  (\mathbb{P}_Y)^{\otimes m})
 \leq4T\left(\sqrt{1+3m/n}-1\right)^2.
\end{align*}
Thus the squared path error is of order \(m^2/n^2\) for \(1\leq m\leq n\),
whereas the diagonal-integrand squared error is exactly \(mT/n^2\).
This model has nonzero first-order measure dependence and an unbounded mean statistic,
so it lies outside \autoref{asp4.2}. 
The square path-space error of order \(m^2/n^2\) concerns the marginal law of the backward values.
The corresponding joint error for the forward and backward values remains of order \(m/n\), as follows by applying the map \(  (\mathbf{x}, \mathbf{y})
\longmapsto (\mathbf y-\mathbf x)/ \sqrt{2}\) at time \(T\)
and using the synchronous coupling for the upper bound.
Thus first-order cancellation is not necessary for the sharp backward marginal rate in every model.
The improved coupling here uses the nondegenerate limiting value law, in contrast to the preceding Dirac limit.
		\end{example}
		\ms

We consider the master equation  
		\begin{equation}\label{master}
			\begin{aligned}
			&\  \partial_tU(t,x,\mu) +  D_x U(t,x,\mu) b(t,x,\mu) +
			\frac{1}{2}
			\operatorname{Tr} \Big(\Sigma \Sigma^{\top} D^2_x U(t,x,\mu) \Big)  \\
		&	+\int_{\mathbb{R}^d} D_{\mu} U(t,x,\mu)(\nu)
		b(t,\nu,\mu)\mu (d\nu)
			+\frac{1}{2}\int_{\mathbb{R}^d } \operatorname{Tr}\Big(\Sigma \Sigma^{\top}   D_{\nu} D_{\mu}
			U(t,x,\mu)(\nu) \Big) \mu (d\nu) \\
			& 	+f\Bigg(t,x,\mu,U(t,x,\mu),m^{U}(t,\mu),
		D_x U(t,x,\mu)\Sigma,m^{\mathcal{Z}}(t,\mu)\Bigg)=0,\\
		&   U(T,x,\mu)=h(x,\mu),
		\end{aligned}
	\end{equation}
	where 
	\begin{align*}
	 m^{U}(t,\mu)=\Big(U(t,\cdot,\mu) \Big)_{\# \mu}
	 \q\ \hbox{and} \q\ 
	m^{\mathcal{Z}}(t,\mu)=\Big(D_x U(t,\cdot,\mu) \Sigma\Big)_{\# \mu}.
	\end{align*}

	We adopt the Lions notion of classical solution:
all derivatives appearing in \eqref{master} are assumed to exist and to be jointly continuous, and the equation is satisfied pointwise. 
The regularity imposed below is stronger than the minimal requirement for mere formulation;  in particular,  the mixed
derivatives \(D_x D_\mu U\) and \(D_\mu^2 U\) are indispensable for the empirical measure lift and the resulting \(O(n^{-1})\) consistency term.
We refer to \cite{buckdahn_li_peng_rainer_17,chassagneux_crisan_delarue_22,carmona18_2}
for the It\^o formula on the space of measures, classical master equations,
and regularity of decoupling fields.
Since the present nonlinearity also involves the push-forward measures \(m^U\) and \(m^{\mathcal{Z}}\), the existence of a solution with the stated regularity is taken as a standing assumption.

			\begin{assumption}\label{asp4.1}\rm 
			The master equation \eqref{master} admits a classical solution \(U: [0,T]\times \mathbb{R}^d \times \mathcal{P}_2(\mathbb{R}^d)
		\longmapsto \mathbb{R}^k\) with the 
		  following  additional mixed regularity:
		\begin{enumerate}[(i)]
				\item The function \(U\) and the derivatives
		\(  
		 \partial_tU, D_xU,  D_x^2U, D_\mu U,
		 D_\nu D_\mu U,  D_xD_\mu U,  D_\mu D_x U, D_\mu^2U\) exist and are jointly continuous in their respective variables, and 
		 \(D_xD_\mu U= D_\mu D_x U \);
	\item 
	There exists a constant \(L_U>0\) such that,
	for all \((t,x,\mu,\nu,\nu')\in 
	[0,T]\times \mathbb{R}^d \times \mathcal{P}_2(\mathbb{R}^d)
	\times \mathbb{R}^d\times \mathbb{R}^d,
      \)
      \begin{align*}
      &\ |D_x U(t,x,\mu)|
      +|D_{x}^2 U(t,x,\mu)|
      +|D_\mu U(t,x,\mu)(\nu)|
       +|D_\nu D_\mu U(t,x,\mu)(\nu)|\\
     & +|D_x D_\mu U(t,x,\mu)(\nu)|+|D_\mu D_x U(t,x,\mu)(\nu)|
      +|D_{\mu}^2 U(t,x,\mu)(\nu,\nu')|\leq L_U;
      \end{align*}
	\item  The functions
	\(U\) and \( \partial_t U \)  have at most  linear growth:
			\begin{align*}
|U(t,x,\mu)|+|\partial_t U(t,x,\mu)|
\leq 
L_U \Bigg( 1+|x|+\Big( \int_{\mathbb{R}^d} |\nu|^2 \mu (d\nu) \Big)^{\frac{1}{2}} \Bigg);
			\end{align*}
				\item  For every \((x,\mu)\in \mathbb{R}^d\times \mathcal{P}_2(\mathbb{R}^d)\),
				\(U(T,x,\mu)=h(x,\mu)\).
					\end{enumerate}
					All derivatives of the vector-valued function \(U\), as well as
				all trace operations involving these derivatives, are understood componentwise.
		\end{assumption}
		
		Under \autoref{asp4.1}, applying the Lions--It\^o formula to 
		\(U(t,X_t,\theta_t)\) and using the master equation \eqref{master}, we obtain 
		\begin{align*}
dU(t,X_t,\theta_t)
=-f(t,X_t,\theta_t,U(t,X_t,\theta_t),m^U(t,\theta_t), D_x U(t,X_t,\theta_t)\Sigma,
m^{\mathcal{Z}}(t,\theta_t)) dt +D_x U(t,X_t,\theta_t)\Sigma dW_t, 
		\end{align*}
		where
		\begin{align*}
m^U(t,\theta_t)=\mathbb{P}_{U(t,X_t,\theta_t)}
\q\ \hbox{and} \q\ m^{\mathcal{Z}}(t,\theta_t)
=\mathbb{P}_{D_x U(t,X_t,\theta_t)\Sigma}.
		\end{align*}
		By the uniqueness result of limiting BSDE \eqref{y1n} in \autoref{exanduni}, we have
		\begin{align*}
			Y_t=U(t,X_t,\theta_t) \q\ 
\hbox{and} \q\ 
Z_t=D_x U(t,X_t,\theta_t)\Sigma.
		\end{align*}
Thus, \(U\) is a  decoupling field of the limiting FBSDE system \eqref{x1n}--\eqref{y1n}.

		For each \(i=1,\cdots,n,\) define
		\begin{align*} 
			\widetilde{Y}_t^i&=U(t,X_t^i,\theta_t^n) \q\ \hbox{and} \q\ 
			\widetilde{Z}_t^{i,j}=\mathbf{1}_{\{i=j\}}
			D_xU(t,X^i_t,\theta_t^n)\Sigma
			+\frac{1}{n}  D_{\mu}U(t,X_t^i,\theta_t^n)(X^j_t)\Sigma;\\
			\widehat{Y}_t^i&=U(t,X_t^i,\theta_t)\q\ \hbox{and} \q\
			\widehat{Z}_t^{i,j}=\mathbf{1}_{\{i=j\}}
			D_xU(t,X^i_t,\theta_t)\Sigma.
		\end{align*}
		
\begin{lemma}\label{empirical}\sl 
  Assume that  \autoref{lipp} holds with \(p=q=2\), and that \autoref{asp4.1} is satisfied. Then, for every \(1\leq m\leq n,\)  
  \begin{align}\label{firstone}
 \mathcal{W}_{2,\|\cdot\|_{\infty}}^2(\mathbb{P}_{(Y^1,\cdots,Y^m)},\mathbb{P}_{(\widetilde{Y}^1,\cdots,\widetilde{Y}^m)})
 + \mathcal{W}_{2,L^2}^2(\mathbb{P}_{(Z^{1,1},\cdots,Z^{m,m})},\mathbb{P}_{(\widetilde{Z}^{1,1},\cdots,\widetilde{Z}^{m,m})})
   \leq C\frac{m}{n^2},
  \end{align}
  where the constant \(C>0\)  
   depends only on \(|\Sigma|, T\),
    the parameters in \autoref{lipp},   and  the constant \(L_U\)   in \autoref{asp4.1}.
    In fact, the proof gives, for every \(i\),
    \begin{align}\label{empirical-coupling}
    	\mathbb E\left[\sup_{t\in [0,T]}|Y_t^i-\widetilde Y_t^i|^2
    	+\int_0^T\sum_{j=1}^n|Z_t^{i,j}-\widetilde Z_t^{i,j}|^2dt\right]
    	\leq \frac C{n^2}.
    \end{align}
    In addition, the off-diagonal integrands satisfy, for every \(i\),
    \begin{align*}
    \mathbb E\int_0^T\sum_{j\ne i}|Z_t^{i,j}|^2dt\leq \frac Cn.
    \end{align*}
\end{lemma}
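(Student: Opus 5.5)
The plan is to show that the intermediate particles \((\widetilde Y^i,\widetilde Z^{i,\cdot})\) solve, on the same product space, the interacting BSDE \eqref{y1} up to a remainder of order \(n^{-1}\) in \(L^2(dt\otimes d\mathbb P)\). BSDE stability will then give \eqref{empirical-coupling}, and synchronous coupling turns this into \eqref{firstone}.

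\textbf{Step 1: the empirical It\^o formula.} Apply the It\^o formula for functions of the empirical measure (projection onto \(\mathbb R^{dn}\)) to \(u^{i}(t,x^1,\ldots,x^n)=U(t,x^i,\frac1n\sum_j\delta_{x^j})\). With \(\sigma\equiv\Sigma\) one has
\[
\partial_{x^j}u^i=\mathbf 1_{\{i=j\}}D_xU+\tfrac1nD_\mu U(\cdot)(x^j),
\]
so the martingale part is exactly \(\sum_j\widetilde Z^{i,j}_tdW^j_t\). The second-order terms contain three kinds of contributions. First, the \(j=i\) diagonal term \(\frac12\operatorname{Tr}(\Sigma\Sigma^\top D_x^2U)\). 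Second, the terms \(\frac1{2n}\sum_j\operatorname{Tr}(\Sigma\Sigma^\top D_\nu D_\mu U(x^j))\), which match the master equation. Third, error terms: \(\frac1n\operatorname{Tr}(\Sigma\Sigma^\top D_xD_\mu U(x^i))\) (cross terms with \(j=i\)) and \(\frac1{2n^2}\sum_j\operatorname{Tr}(\Sigma\Sigma^\top D^2_\mu U(x^j,x^j))\). The drift terms combine to \(D_xU\,b(t,x^i,\theta^n)+\int D_\mu U\,b\,d\theta^n\). Using \eqref{master} at \((t,X^i_t,\theta^n_t)\), which holds pointwise since \(U\) is classical and \(\theta^n_t\in\mathcal P_2\), we get
\[
d\widetilde Y^i_t=-f\big(t,X^i_t,\theta^n_t,\widetilde Y^i_t,m^U(t,\theta^n_t),D_xU(t,X^i_t,\theta^n_t)\Sigma,m^{\mathcal Z}(t,\theta^n_t)\big)dt+R^i_tdt+\sum_j\widetilde Z^{i,j}_tdW^j_t,
\]
where the bounds in \autoref{asp4.1}(ii) give \(|R^i_t|\le C/n\) deterministically. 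The terminal values agree, \(\widetilde Y^i_T=h(X^i_T,\theta^n_T)\), by \autoref{asp4.1}(iv). The key observation is that \(m^U(t,\theta^n_t)=\frac1n\sum_j\delta_{\widetilde Y^j_t}\) exactly, and \(m^{\mathcal Z}(t,\theta^n_t)=\frac1n\sum_j\delta_{D_xU(t,X^j_t,\theta^n_t)\Sigma}\). The latter differs from \(\frac1n\sum_j\delta_{\widetilde Z^{j,j}_t}\) only by the \(\frac1nD_\mu U\Sigma\) correction. Likewise the \(z\)-argument \(D_xU\Sigma\) differs from \(\widetilde Z^{i,i}_t\) by at most \(C/n\). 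The Lipschitz property of \(f\) in \(z,\nu\) (with \(\mathcal W_2\) between empirical measures bounded by the maximum pairwise distance) therefore moves both discrepancies into the remainder, still \(O(1/n)\) uniformly.

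\textbf{Step 2: stability for the \(nk\)-dimensional BSDE.} Set \(\delta Y^i=Y^i-\widetilde Y^i\) and \(\delta Z^{i,j}=Z^{i,j}-\widetilde Z^{i,j}\). Apply It\^o to \(\sum_i|\delta Y^i_t|^2\) (or \(e^{at}\) times it). Monotonicity in \(y\), together with the bound \(\mathcal W_2(\mu^n,\widetilde\mu^n)^2\le\frac1n\sum_j|\delta Y^j|^2\), and the analogous bound for \(\nu\) via \(\delta Z^{j,j}\), controls the generator difference by \(C\sum_i(|\delta Y^i|^2+|\delta Y^i|\,|\delta Z^{i,i}|+|\delta Y^i|(\frac1n\sum_j|\delta Z^{j,j}|^2)^{1/2})+\sum_i|\delta Y^i||R^i|\). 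Young's inequality absorbs the \(Z\) terms into \(\sum_{i,j}|\delta Z^{i,j}|^2\), and BDG handles the supremum, exactly as in the stability estimates used for \autoref{xyzcon} and \autoref{nodi}. Gronwall then yields
\[
\mathbb E\Big[\sup_t\sum_i|\delta Y^i_t|^2+\int_0^T\sum_{i,j}|\delta Z^{i,j}_t|^2dt\Big]\le C\,\mathbb E\int_0^T\sum_i|R^i_t|^2dt\le C\frac{n}{n^2}.
\]
Since the constants are independent of \(n\), exchangeability gives \eqref{empirical-coupling} for each \(i\).

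\textbf{Step 3: conclusions.} To get \eqref{firstone}, use the synchronous coupling \((Y^i,\widetilde Y^i)_{i\le m}\) and sum \eqref{empirical-coupling} over \(i\le m\). For the off-diagonal bound, note that \(|\widetilde Z^{i,j}|\le L_U|\Sigma|/n\) for \(j\ne i\). Hence \(\mathbb E\int\sum_{j\ne i}|Z^{i,j}|^2\le 2C/n^2+2(n-1)C/n^2\le C/n\).

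The main obstacle is Step 1. One needs to justify the finite-dimensional It\^o formula for \(u^i\), which requires \(C^{1,2}\) regularity of the projection. That regularity follows from joint continuity of \(D_xD_\mu U\), \(D_\nu D_\mu U\) and \(D^2_\mu U\) and the symmetry \(D_xD_\mu U=D_\mu D_xU\), but the second derivatives of the projection have to be identified carefully. The first step is to write \(\partial_{x^j}\partial_{x^l}u^i\) explicitly, including the \(\frac1{n^2}D^2_\mu U\) and \(\frac1n\mathbf 1_{\{j=l\}}D_\nu D_\mu U\) pieces, and to check that only the diagonal-in-noise terms survive in the trace against the block-diagonal covariance \(\Sigma\Sigma^\top\otimes I_n\). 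A localization by stopping times handles the linear growth of \(U\) and \(\partial_tU\).
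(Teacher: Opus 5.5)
Your proposal is correct and follows the paper's proof essentially step for step. You use the empirical-lift It\^o formula for $U(t,x^i,\mu^n_{\vec x})$ together with the master equation to obtain a perturbed particle BSDE whose remainder is $O(n^{-1})$; this remainder includes the $\frac1n D_\mu U\,\Sigma$ correction to the diagonal integrand and to its empirical law. The averaged monotone stability estimate, Gronwall, BDG and exchangeability then give \eqref{empirical-coupling}, and synchronous coupling together with $|\widetilde Z^{i,j}|\le C/n$ for $j\ne i$ gives the remaining two claims. Your explicit localization to handle the linear growth of $U$ and $\partial_t U$ is a detail the paper leaves implicit.
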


\begin{proof}
	The constant \(C\) in the following may vary from line to line.
For  \(\overrightarrow{x}=(x^1,\cdots,x^n)\in (\mathbb{R}^{d})^n\),
set
\(\mu_{\overrightarrow{x}}^n= \frac{1}{n}\sum_{j=1}^{n} \delta_{x^j}\), and define
\begin{align*}
	u^{i,n}(t,\overrightarrow{x})\triangleq U(t,x^i,\mu_{\overrightarrow{x}}^n),\q\ i=1,\cdots,n.
\end{align*}
Under \autoref{asp4.1}, \(u^{i,n}\in C^{1,2}([0,T]\times (\mathbb{R}^d)^n;\mathbb{R}^k)\).
By the differentiation rule for empirical-measure lifts,
\begin{align*}
	 D_{x^j}u^{i,n}(t,\overrightarrow{x})
	 =\mathbf{1}_{\{i=j\}}D_x U(t,x^i,\mu_{\overrightarrow{x}}^n)
	 +\frac{1}{n}D_\mu U(t,x^i,\mu_{\overrightarrow{x}}^n)(x^j).
\end{align*}
Using \(D_x D_\mu U=D_\mu D_x U\),
the   differentiation also gives
\begin{equation}\label{cal1}
\begin{aligned}
\frac{1}{2}\sum_{j=1}^{n}\operatorname{Tr}\Big(\Sigma\Sigma^\top D_{x^j}^2 u^{i,n}(t,\overrightarrow{x})\Big)
=&\ \frac{1}{2}\operatorname{Tr}\Big(\Sigma\Sigma^\top D_x^2U(t,x^i,\mu_{\overrightarrow{x}}^n) \Big)
+\frac{1}{2n}\sum_{j=1}^{n} \operatorname{Tr}\Big( \Sigma\Sigma^\top
D_\nu D_\mu U(t,x^i,\mu_{\overrightarrow{x}}^n)(x^j) \Big)\\
&+\frac{1}{n}\operatorname{Tr}\Big(\Sigma\Sigma^\top D_x D_\mu U(t,x^i,\mu_{\overrightarrow{x}}^n)(x^i) \Big)\\
&+\frac{1}{2n^2}\sum_{j=1}^{n}\operatorname{Tr}\Big(\Sigma\Sigma^\top
D_\mu^2 U(t,x^i,\mu_{\overrightarrow{x}}^n) (x^j,x^j)\Big).
\end{aligned}
\end{equation}
Moreover,
\begin{equation}\label{cal2}
	\begin{aligned}
	 \sum_{j=1}^{n} D_{x^j}u^{i,n}(t,\overrightarrow{x})b(t,x^j,\mu_{\overrightarrow{x}}^n)
	 =D_xU(t,x^i,\mu_{\overrightarrow{x}}^n) b(t,x^i,\mu_{\overrightarrow{x}}^n)
	 +\frac{1}{n}\sum_{j=1}^{n} D_{\mu} U(t,x^i,\mu_{\overrightarrow{x}}^n)(x^j)b(t,x^j,\mu_{\overrightarrow{x}}^n).
	\end{aligned}
\end{equation}
For each \(i=1,\cdots,n\),
set 
\begin{align*}
	\xi_t^{i,n}=D_x U(t,X^i_t,\theta_t^n)\Sigma\q\  \hbox{and} \q\ 
	\eta_t^n=\frac{1}{n}\sum_{j=1}^{n}\delta_{\xi_t^{j,n}}.
\end{align*}
Applying It\^o's formula to \(u^{i,n}(t,\overrightarrow{X_t})\), 
and using \eqref{cal1} and \eqref{cal2}, together with \eqref{master}, yields
\begin{align}\label{widey}
d\widetilde{Y}_t^i
=\Big(-f(t,X_t^i,\theta_t^n,\widetilde{Y}_t^i,m^U(t,\theta_t^n),\xi_t^{i,n},m^{\mathcal{Z}}(t,\theta_t^n))+R_t^{i,n}\Big) dt
+\sum_{j=1}^{n}\widetilde{Z}_t^{i,j}dW_t^j,
\end{align}
where
\begin{align*}
&	m^U(t,\theta_t^n)=\frac{1}{n}\sum_{j=1}^{n}\delta_{U(t,X_t^j,\theta_t^n)}
	 \q\ \hbox{and} \q\ 
	m^{\mathcal{Z}}(t,\theta_t^n)=
	\frac{1}{n}\sum_{j=1}^{n}\delta_{D_x U(t,X_t^j,\theta_t^n)\Sigma}=\eta_t^n,\\
	&R_t^{i,n}=
	\frac{1}{n}\operatorname{Tr}\Big(\Sigma \Sigma^\top D_x D_\mu U(t,X_t^i,\theta_t^n) (X_t^i)\Big) +\frac{1}{2n^2} \sum_{j=1}^{n}
	\operatorname{Tr} \Big( \Sigma \Sigma^\top D_\mu^2 U(t,X_t^i,\theta_t^n)(X_t^j,X_t^j) \Big).
\end{align*}
In particular, the uniform bounds in \autoref{asp4.1} imply 
  \(|R_t^{i,n}|\leq \frac{C}{n}\).
  Set \(\widetilde{\mu}_t^n
  =\frac{1}{n}\sum_{j=1}^{n}\delta_{\widetilde{Y}_t^j} \)
  and \(\widetilde{\nu}_t^n=\frac{1}{n}\sum_{j=1}^{n}\delta_{\widetilde{Z}_t^{j,j}}
  \).
  By the definition of
  \(\widetilde{Z}^{i,j}\),
\begin{align*}
|\widetilde{Z}^{i,i}_t-\xi_t^{i,n}|
&=\frac{1}{n}|D_\mu U(t,X_t^i,\theta_t^n)(X_t^i)\Sigma|\leq 
\frac{C}{n}\\
\mathcal{W}_2^2(\widetilde{\nu}_t^n,\eta_t^n)
&\leq \frac{1}{n}\sum_{j=1}^{n}|\widetilde{Z}_t^{j,j}-\xi_t^{j,n}|^2\leq \frac{C}{n^2}.
\end{align*}
Let 
\begin{align*}
	\mathcal{E}_t^{i,n}=
	f(t,X_t^i,\theta_t^n,\widetilde{Y}_t^i,\widetilde{\mu}_t^n,\xi_t^{i,n},\eta_t^n)
	-f(t,X_t^i,\theta_t^n,\widetilde{Y}_t^i,\widetilde{\mu}_t^n,\widetilde{Z}_t^{i,i},\widetilde{\nu}_t^n)-R_t^{i,n}.
\end{align*}
By the Lipschitz continuity of \(f\),  \(|\mathcal{E}_t^{i,n}| \leq \frac{C}{n}\).
Since \(\widetilde{Y}_T^i= U(T,X_T^i,\theta_T^n)=h(X_T^i,\theta_T^n)\), BSDE
\eqref{widey} can be rewritten as a perturbed BSDE:
\begin{align}\label{pury}
\widetilde{Y}_t^i
=h(X_T^i,\theta_T^n)+\int_t^T
\Big(f(s,X_s^i,\theta_s^n,\widetilde{Y}_s^i,\widetilde{\mu}_s^n,\widetilde{Z}_s^{i,i},\widetilde{\nu}_s^n)+\mathcal{E}_s^{i,n}\Big) ds
-\sum_{j=1}^{n}\int_t^T\widetilde{Z}_s^{i,j}dW_s^j,\q\ 0\leq t\leq 
T,
\end{align}
where
\begin{align*}
	&	\widetilde{\mu}_t^n=\frac{1}{n}\sum_{j=1}^{n}\delta_{\widetilde{Y}_t^j}
=m^{U}(t,\theta_t^n)\q\ \hbox{and} \q\ 	\widetilde{\nu}_t^n=\frac{1}{n}\sum_{j=1}^{n}\delta_{\widetilde{Z}_t^{j,j}}.
\end{align*}
The growth and boundedness conditions in \autoref{asp4.1}, together with the moment estimate for the forward particle system imply that
 	\((\widetilde{Y}^i,(\widetilde{Z}^{i,j})_{j=1}^n) \in 	S_{\mathbb{F}^n}^{2}([0,T];\mathbb{R}^{k})\times 	L_{\mathbb{F}^n}^{2}([0,T];\mathbb{R}^{ k \times nd}) \).
Furthermore, set
 \begin{align*}
 \triangle Y^i=Y^i-\widetilde{Y}^i\q\ \hbox{and}\q\ 
 \triangle Z^{i,j}=Z^{i,j}-\widetilde{Z}^{i,j}.
 \end{align*}
 Then 
 \begin{align*}
 \triangle Y_t^i=\int_t^T \Big( F_s^{i,n}-\mathcal{E}_s^{i,n}\Big)ds -\sum_{j=1}^{n}\int_t^T \triangle Z_s^{i,j} dW_s^j,\q\ 0\leq t\leq T,
  \end{align*}
  where
  \begin{align*}
F_t^{i,n}=
f(t,X_t^i,\theta_t^n,Y_t^i,\mu_t^n,Z_t^{i,i}, \nu_t^n)
-f(t,X_t^i,\theta_t^n,\widetilde{Y}_t^i,\widetilde{\mu}_t^n,\widetilde{Z}_t^{i,i},\widetilde{\nu}_t^n).
  \end{align*}
  By the monotonicity in \(y\), the Lipschitz continuity in the measure and \(z\),  using Young's inequality,  
  for any \(\delta>0\), we have
  \begin{align*}
  	2\langle \triangle Y_t^i, F_t^{i,n} \rangle 
  	\leq 
  	C_\delta |\triangle Y_t^i|^2
  	+C_\delta \mathcal{W}_2^2 (\mu_t^n,\widetilde{\mu}_t^n)
  	+\delta |\triangle Z_t^{i,i}|^2
  	+\delta \mathcal{W}_2^2 (\nu_t^n,\widetilde{\nu}_t^n),
  \end{align*}
  where
\begin{align*}
 \mathcal{W}_2^2(\mu_t^n,\widetilde{\mu}^n_t)
& \leq \frac{1}{n}\sum_{i=1}^{n}|\triangle Y_t^i|^2\q\ \hbox{and} \q\ 
  \mathcal{W}_2^2(\nu_t^n,\widetilde{\nu}^n_t)
  \leq \frac{1}{n}\sum_{i=1}^{n}|\triangle Z_t^{i,i}|^2.
\end{align*}
Moreover,
\begin{align*}
 -2\langle \triangle Y_t^i,\mathcal{E}_t^{i,n}\rangle 
 \leq 
 |\triangle Y_t^i|^2+ |\mathcal{E}_t^{i,n}|^2.
\end{align*}
Applying It\^o's formula to \(|\triangle Y_t^i|^2\), choosing \(\delta>0\) sufficiently small to absorb the diagonal \(Z\)-terms,
 summing over \(i\), dividing by \(n\), and taking expectations, we obtain
	\begin{align*}
	\frac{1}{n}\sum_{i=1}^{n}	\mathbb{E}\left[ 
	|\triangle Y^i_t |^2
	+ \frac{1}{2}\int_t^T  \sum_{j=1}^{n} |\triangle Z_s^{i,j} |^2
	ds
	\right]\leq
	 \frac{C}{n}\sum_{i=1}^{n} \mathbb{E}\int_t^T \Big( 
	|\triangle Y_s^i|^2 +|\mathcal{E}_s^{i,n}|^2\Big) ds.
\end{align*}
Then,
Gronwall's inequality gives
\begin{align*}
 \sup\limits_{t\in [0,T]}	\frac{1}{n}\sum_{i=1}^{n}	\mathbb{E}\left[ 
 |\triangle Y^i_t |^2\right] +	\frac{1}{n}\sum_{i=1}^{n}\mathbb{E}
  \int_0^T  \sum_{j=1}^{n} |\triangle Z_s^{i,j} |^2
 ds
 \leq 
 \frac{C}{n}\sum_{i=1}^{n}\mathbb{E} \int_0^T |\mathcal{E}_s^{i,n}|^2ds\leq
 \frac{C}{n^2}.
\end{align*}
Consequently, the Burkholder--Davis--Gundy inequality gives
	\begin{align}\label{newstabili}
	\frac{1}{n}\sum_{i=1}^{n}	\mathbb{E}\left[ \sup\limits_{t\in [0,T]}
	|\triangle Y^i_t |^2
	+ \int_0^T  \sum_{j=1}^{n} |\triangle Z_t^{i,j} |^2
	dt 
	\right]\leq  
 \frac{C}{n^2}.
\end{align}
Finally,
by  exchangeability and synchronous coupling,
 we have
 \begin{align*}
\mathcal{W}_{2,\|\cdot\|_{\infty}}^2(\mathbb{P}_{(Y^1,\cdots,Y^m)},\mathbb{P}_{(\widetilde{Y}^1,\cdots,\widetilde{Y}^m)})
&\leq \sum_{i=1}^{m} \mathbb{E}\left[ \sup_{t\in [0,T]} |Y_t^i-\widetilde{Y}_t^i|^2\right]
\leq \frac{Cm}{n^2},\\ \mathcal{W}_{2,L^2}^2(\mathbb{P}_{(Z^{1,1},\cdots,Z^{m,m})},\mathbb{P}_{(\widetilde{Z}^{1,1},\cdots,\widetilde{Z}^{m,m})})
&\leq \sum_{i=1}^{m}\mathbb{E}\int_0^T |Z_s^{i,i}-\widetilde{Z}_s^{i,i}|^2 ds
\leq 
\frac{Cm}{n^2}.
 \end{align*}
 This proves \eqref{firstone}. For \(j\ne i\),
\(\widetilde Z_t^{i,j}=
n^{-1}D_\mu U(t,X_t^i,\theta_t^n)(X_t^j)\Sigma\).
The expected squared \(L^2(0,T)\)-norm of this off-diagonal row is at most \(C/n\). Combining this with
\eqref{empirical-coupling} proves the off-diagonal estimate.
\end{proof}

\autoref{nodimen} controls empirical errors through finitely many statistics of the coefficients. For the middle comparison below, we instead impose a finite-dimensional structure on the decoupling
field \(U\), together with a first-order cancellation condition along the limiting law flow.

\begin{assumption}\label{asp4.2}\rm 
	There exist  an integer \(r_U\geq 1,\) two constants \(L_G, L_B>0,\) a function \(\phi_U \in C_b^{1,2}([0,T]\times \mathbb{R}^d; \mathbb{R}^{r_U})\), a function \(G:
	[0,T]\times \mathbb{R}^d \times \mathbb{R}^{r_U}\longmapsto \mathbb{R}^k\), and a measurable function \(B: [0,T]\times \mathbb{R}^{r_U}\longmapsto \mathbb{R}^{r_U}\).
	Let 
	\begin{align*}
	 \Phi_t(\mu) \triangleq \int_{\mathbb{R}^d} \phi_U(t,x)\mu (dx),\q\ 
	 A_t \triangleq \Phi_t(\theta_t),\q\ A_t^n \triangleq \Phi_t(\theta_t^n).
	\end{align*} 
	The following conditions hold:
	\begin{enumerate}[(i)]
	\item 
	For every \((t,x,\mu)\in [0,T]\times \mathbb{R}^d \times \mathcal{P}_2(\mathbb{R}^d)\),
	\begin{align*}
	 U(t,x,\mu)=G(t,x,   \Phi_t(\mu) ).
	\end{align*} 
	\item
	For every \((t,x)\in [0,T]\times \mathbb{R}^d,\)
	\begin{align*}
	 D_a G(t,x,A_t)=0.
	\end{align*} 
	\item 
	\(G\) and \(D_x G\) are jointly continuous in \(t,x,a\).
	For every \((t,x)\in [0,T]\times \mathbb{R}^d\), the mappings \(a\longmapsto G(t,x,a)\) and 
	\(a\longmapsto D_x G(t,x,a)\) are twice continuously differentiable,  
	moreover,
	\begin{align*}
	 D_a D_x G=D_x D_a G\q\ \hbox{and}\q\  D_a^2 D_x G= D_x D_a^2 G.
	\end{align*}
	 The derivatives
	\(
	 D_a G, D_a^2 G, D_a D_x G,  D_a^2 D_x G
\)
	are jointly continuous.  For every \((t,x,a)\in [0,T]\times \mathbb{R}^d \times \mathbb{R}^{r_U}\),
	\begin{align*}
		|D_a G(t,x,a)|
		+|D_x D_a G(t,x,a)|
		+|D_a^2 G(t,x,a)|
		+|D_x D_a^2G(t,x,a)|
		\leq L_G.
	\end{align*}
	\item  
	For every \((t,\mu)\in [0,T]\times \mathcal{P}_2(\mathbb{R}^d)\),
	\begin{align*}
	 \int_{\mathbb{R}^d}
	 \Big( \partial_t \phi_U(t,x) + D_x \phi_U (t,x)b(t,x,\mu)
	 +\frac{1}{2}\operatorname{Tr}\big(\Sigma\Sigma^\top
	 D_x^2 \phi_U (t,x)\big)
	 \Big) \mu (dx)
	 =B(t, \Phi_t(\mu)).
	\end{align*}
	All differential operators and trace terms involving the vector-valued function \(\phi_U\) are understood componentwise.
	\item 
	For every 
	\((t,a,a')\in [0,T]\times \mathbb{R}^{r_U}\times \mathbb{R}^{r_U}\),
	\begin{align*}
		 |B(t,a)-B(t,a')| \leq L_B |a-a'|\q\ 
		 \hbox{and} \q\ 
		 \sup\limits_{t\in [0,T]}|B(t,0)|<\infty.
	\end{align*}
	\end{enumerate}
\end{assumption}

\begin{remark}\label{reamasp4.2}\sl 
	\begin{enumerate}[(1)]
	\item
	Condition (ii) eliminates the linear term
	\(
	D_aG(t,x,A_t)(A_t^n-A_t)
\)
	in the Taylor expansion of 
	 \begin{align*}
		G(t,x,A_t^n)-G(t,x,A_t).
	\end{align*} 
	At the level of the measure argument, it forces the centered first variation
	\begin{align*}
 v\longmapsto D_aG(t,x,A_t)\big(\phi_U(t,v)-A_t\big)
	\end{align*} 
	to vanish identically. 
	Integrating this variation against the empirical measure recovers the linear term above. This cancellation is analogous to the vanishing of the first Hoeffding projection of a \(U\)-statistic. The analogy concerns the cancellation mechanism; the estimates below are obtained from It\^o's formula, the Burkholder--Davis--Gundy inequality and Gronwall's lemma. 
 \item
 Conditions (iv) and (v) 
may be replaced by the direct assumption  that there exists a   constant \(C>0\), independent of \(n\), such that, for every \(n\geq 1\),
 \begin{align*}
 \mathbb{E}\left[ 
 \sup\limits_{t\in [0,T]}
| A_t^n-A_t |^4 \right]
\leq \frac{C}{n^2}.
 \end{align*} 
 Then \(B\) and \(L_B\) in \autoref{asp4.2} are no longer required.
 	\end{enumerate}
\end{remark}

\begin{example}\label{exasp4.2}\sl 
Let \(d=k=r_U=1\), \(b=f=0\), \(\Sigma=\sigma_0>0\), and let 
\((P_t)_{t\geq 0}\) denote the heat semigroup associated with 
\begin{align*}
dX_t=\sigma_0 dW_t.
\end{align*}
Let \(g, \phi \in C_b^{\infty}(\mathbb{R})\), \(\phi\) is nonconstant, and set 
\(a^* = \langle \phi,\theta_T\rangle \).
Choose \(\rho\in C_b^{\infty}(\mathbb{R})\) such that
\(\rho(q)=q^2,\) \(|q|\leq 
2\|\phi\|_{\infty}\), and define 
\begin{align*}
h(x,\mu)=g(x)+\rho(\langle \phi, \mu \rangle- a^*).
\end{align*}
Then the corresponding master equation admits the classical solution 
\begin{align*}
 U(t,x,\mu)
 =P_{T-t}g(x)+\rho (\langle P_{T-t}\phi,\mu\rangle -a^*).
\end{align*}
\rm Indeed,  set
\( 
 u_t(x)=P_{T-t}g(x)\), \(\psi_t(x)=P_{T-t}\phi(x),
\)
then
\begin{align*}
 &(\partial_t+\frac{\sigma_0^2}{2}\partial_{xx}) u_t=0\q\ \hbox{and}\q\ 
 (\partial_t+\frac{\sigma_0^2}{2}\partial_{xx}) \psi_t=0,\\
 &D_\mu U(t,x,\mu)(\nu)=\rho' (\langle \psi_t,\mu\rangle -a^*)
 D_\nu \psi_t (\nu),\\
 &D_\nu D_\mu U(t,x,\mu)(\nu)
 =\rho' (\langle \psi_t,\mu\rangle -a^*)
 D^2_\nu \psi_t (\nu).
\end{align*}
 Hence 
\begin{align*}
 &  \partial_t U(t,x,\mu)+\frac{\sigma_0^2}{2} \partial_{xx} U(t,x,\mu)
 +\frac{\sigma_0^2}{2}\int_{\mathbb{R}}D_\nu D_\mu U(t,x,\mu)(\nu) \mu (d\nu)\\
&  =(\partial_t+\frac{\sigma_0^2}{2}\partial_{xx})  u_t(x)
 +\rho' (\langle \psi_t,\mu\rangle -a^*)\langle (\partial_t+\frac{\sigma_0^2}{2}\partial_{xx}) \psi_t,\mu \rangle =0,\\
  & U(T,x,\mu)=g(x)+\rho(\langle \phi, \mu \rangle- a^*)=h(x,\mu).
\end{align*}
Set 
\begin{align*}
\phi_U(t,x)=P_{T-t}\phi(x) \q\ \hbox{and} \q\ 
G(t,x,a)=P_{T-t}g(x)+\rho(a-a^*).
\end{align*}
Then
\begin{align*}
 U(t,x,\mu)&=G(t,x,\langle \phi_U(t,\cdot),\mu \rangle).
\end{align*}
Since \(g,\phi, \rho \in C_b^{\infty}\), conditions (i) and (iii) are satisfied. Moreover, 
\begin{align*}
 \int_{\mathbb{R}}\Big(\partial_t \phi_U +\frac{\sigma_0^2}{2}\partial_{xx}
 \phi_U \Big)d\mu=0,
\end{align*}
 conditions (iv) and (v) in \autoref{asp4.2} hold with \(B=0\).   By the semigroup property,
\begin{align*}
\langle P_{T-t}\phi,\theta_t\rangle
=\langle \phi,\theta_T\rangle =a^*.
\end{align*}
It follows that
\begin{align*}
D_a G(t,x,a^*)=\rho'(0)=0 \q\ \hbox{and} \q\
D_x D_a G(t,x,a^*)=0.
\end{align*}
Thus \autoref{asp4.2} is satisfied. 
Since \(\phi\) is nonconstant,  there exists
\(x_0\in \mathbb{R}\) such that 
\(\phi(x_0)\neq a^*\).
Moreover, 
\begin{align*}
	U(T,x,\theta_T)=g(x)\q\ \hbox{and} \q\ 
	U(T,x,\delta_{x_0})
	=g(x)+(\phi(x_0)-a^*)^2.
\end{align*}
Hence
 \(U\) depends nonlinearly and nontrivially on its measure argument.

For the zero-generator model, the particle correction is explicit. Set
\(q_t^n=\langle\psi_t,\theta_t^n\rangle-a^*\) and
\(\kappa_t=P_{T-t}(\phi^2)-\psi_t^2\).
Conditional independence of the forward particles gives
\begin{align*}
 Y_t^i&=u_t(X_t^i)+(q_t^n)^2+\frac1{n^2}\sum_{j=1}^n\kappa_t(X_t^j),\\
 Z_t^{i,j}&=\mathbf1_{\{i=j\}}\sigma_0u_t'(X_t^i)
 +\sigma_0\left(\frac{2q_t^n}{n}\psi_t'(X_t^j)+\frac1{n^2}\kappa_t'(X_t^j)\right).
\end{align*}
In particular, \(Y_t^i-\widetilde Y_t^i=n^{-1}\langle\kappa_t,\theta_t^n\rangle\).
To obtain a matching lower bound,
we further specialize this zero-generator model by taking \(g=0\). The limiting value process is then identically zero, and all particle value processes coincide.
Let \(v=\operatorname{Var}(\phi(X_T))>0\) and
\(\mu_4=\mathbb E\left[ |\phi(X_T)-a^*|^4\right]\).
For \(T>0\), positivity of \(v\) follows from the nonconstancy of \(\phi\)
and the strictly positive density of \(X_T\).
The terminal squared distance is exactly
\begin{align*}
 \mathcal W_2^2(\mathbb P_{(Y_T^1,\ldots,Y_T^m)},\delta_0^{\otimes m})
 =\sum_{i=1}^{m}\mathbb{E}\left[|Y_T^i|^2\right]
 =m\left(\frac{3v^2}{n^2}+\frac{\mu_4-3v^2}{n^3}\right).
\end{align*}
The common value is the conditional expectation of its square-integrable terminal value.
Doob's inequality therefore bounds the squared path distance by four times this expression:
\begin{align*}
 \mathcal{W}_{2,\|\cdot\|_{\infty}}^2( \mathbb{P}_{(Y^1,\cdots,Y^m)},
 \delta_0^{\otimes m}
 )=m\mathbb{E}\left[\sup\limits_{t\in [0,T]} |\mathbb{E}\left[Y_T^i | \mathscr{F}_t^n\right]|^2 \right]
 \leq 
 4m\left(\frac{3v^2}{n^2}+\frac{\mu_4-3v^2}{n^3}\right).
\end{align*}
Evaluation at \(T\) gives the same expression as a lower bound, without the factor four. 
Consequently, the intrinsic \(m/n^2\) term in \autoref{sharpuperr} is optimal even
when the forward particles are independent and the field depends nontrivially on the measure.

We now
return to  arbitrary \(g\) and introduce feedback through
 both backward laws, calibrated to the same decoupling field. 
 The following construction uses ordinary linear feedback
 through the two backward means.
Fix two deterministic constants \(\eta_Y, \eta_Z \in \mathbb{R}\setminus \{0\}\), representing the strengths of the feedback through the means of the two distributions.
 Keep the forward equation and terminal condition above, take a
Gaussian initial law, 
and replace the zero generator by
\begin{align*}
 f(t,x,\theta,y,\mu,z,\nu)
 &=\eta_Y\left(\int_{\mathbb R} y'\mu(dy')-\mathfrak m_Y(t,\theta)\right)
 +\eta_Z\left(\int_{\mathbb R} z'\nu(dz')-\mathfrak m_Z(t,\theta)\right),\\
 \mathfrak m_Y(t,\theta)
 &=\langle u_t,\theta\rangle+
 \rho(\langle\psi_t,\theta\rangle-a^*),
 \q\
 \mathfrak m_Z(t,\theta)=\sigma_0\langle u_t',\theta\rangle,
\end{align*}
  These are prescribed functions of the
forward law: \(\mathfrak m_Y=\int_\mathbb{R} U(t,v,\theta)\theta(dv)\) and
\(\mathfrak m_Z=\int_\mathbb{R} \sigma_0D_xU(t,v,\theta)\theta(dv)\).
Their uniform \(\mathcal W_2\)-Lipschitz bounds follow from the bounded
spatial derivatives of \(u_t,\psi_t,\rho\). Thus the new generator
satisfies \autoref{lipp} with \(p=q=2\); it is independent of the
individual variable \(y\), so its monotonicity constant may be taken to
be zero. On the master-equation graph it vanishes, and consequently the
same \(U\) remains a classical solution. The preceding verification of
\autoref{asp4.2} is unchanged. The bounded mixed derivatives of \(U\)
verify \autoref{asp4.1}, and \(b=0\), constant \(\sigma_0>0\), and the
Gaussian initial law verify \autoref{asp4.3}. Hence
\autoref{sharpuperr} applies to this family.

The feedback is not identically zero at the particle level. For a
configuration \(\mathbf x\), write \(\theta^n=n^{-1}\sum_{j=1}^n \delta_{x_j}\),
\(a_n=\langle\psi_t,\theta^n\rangle\), and evaluate the generator at the
empirical lift and its diagonal martingale integrands. Its value is
\begin{align*}
 \frac{\eta_Z\sigma_0}{n}\rho'(a_n-a^*)
 \langle\psi_t',\theta^n\rangle.
\end{align*}
For example, with \(\phi=\sin\) and a centered Gaussian initial law,
\(a^*=0\), and this expression is nonzero when all coordinates lie in
\((0,\pi/2)\). The particle equations therefore involve the backward
empirical means even though the feedback cancels on the limiting
master-equation graph. This is a constructed family with an explicit
classical field; it illustrates the applicability of the consistency
estimate in the presence of backward feedback, rather than a general
existence theorem for classical master equations.

An ordinary linear feedback also admits a directly verifiable field, without
subtracting its values on the master-equation graph.
Keep \(b=0\), \(\Sigma=\sigma_0>0\), take a centered Gaussian initial law, and set
\begin{align*}
 h(x,\theta)=\langle\sin,\theta\rangle^2\q\ \hbox{and} \q\ 
 f(t,x,\theta,y,\mu,z,\nu)
 =\sigma_0^2\int y'\mu(dy')+\eta_Z\int z'\nu(dz'),\quad \eta_Z\ne0.
\end{align*}
Here \(U(t,x,\theta)=\langle\sin,\theta\rangle^2\).
Indeed, its measure-diffusion term is \(-\sigma_0^2\langle\sin,\theta\rangle^2\),
the mean-value feedback is the opposite term, and \(D_xU=0\).
All mixed derivatives required by \autoref{asp4.1} are bounded.
For \autoref{asp4.2}, take \(\phi_U=\sin\),
\(G(t,x,a)=\rho_0(a)\), with \(\rho_0\in C_b^\infty\) equal to \(a^2\) on \([-1,1]\),
and \(B(t,a)=-\sigma_0^2a/2\).
Symmetry gives \(A_t=0\), so first-order cancellation holds.
The generator is globally Lipschitz in the two laws and independent of the individual variables;
thus \autoref{lipp} holds, and the forward assumptions in \autoref{asp4.3} follow as above.

The feedback through the integrand law is also active in the finite system.
To see this, put \(\tau=T-t\), \(a_{t,n}=\sigma_0\eta_Z\tau/n\), and
\begin{align*}
 q_{t,n}(x)=P_\tau\sin(x+a_{t,n}),\qquad
 v_{t,n}(x)=P_\tau(\sin^2)(x+a_{t,n})-q_{t,n}(x)^2.
\end{align*}
Write \(\mathbf X_t=(X_t^1,\ldots,X_t^n)\). All particle values equal
\begin{align*}
 Q_t^n=\mathcal Q^n(t,\mathbf X_t),\qquad
 \mathcal Q^n(t,\mathbf x)
 =e^{\sigma_0^2\tau}\left[
 \left(\frac1n\sum_{j=1}^n q_{t,n}(x_j)\right)^2
 +\frac1{n^2}\sum_{j=1}^n v_{t,n}(x_j)\right],
\end{align*}
and \(Z_t^{i,j}=\sigma_0\partial_{x_j}\mathcal Q^n(t,\mathbf X_t)\) for every \(i,j\).
This follows either by direct substitution or by the linear BSDE formula,
whose change of measure shifts each Brownian drift by \(\eta_Z/n\).
At \(x_j=\pi/4-a_{t,n}\) for every \(j\), direct differentiation gives
\begin{align*}
 \sigma_0\partial_{x_j}\mathcal Q^n(t,\mathbf x)
 =\frac{\sigma_0}{n^2}\left(n-1+e^{-\sigma_0^2\tau}\right)>0.
\end{align*}
For \(t<T\), \(v_{t,n}(x)>0\) for every \(x\), so \(Q_t^n>0\). By continuity, the averaged diagonal integrand is positive on a neighborhood of the displayed configuration. For \(0<t<T\), the forward vector has a strictly positive Gaussian density, so this event has positive probability. Thus both empirical feedbacks are nontrivial, although the limiting pair is \((Y,Z)=(0,0)\).
The terminal value is again the squared empirical sine mean. The preceding terminal lower bound and \autoref{sharpuperr}, with \(F_{m,n}=0\), therefore show that the squared path error for \(Y\) is of order \(m/n^2\) also in this feedback model.
\end{example}

\begin{lemma}\label{middle}\sl 
	Assume that \autoref{asp4.1} and  \autoref{asp4.2} hold. Then,  for every \(1\leq m\leq n,\)
	\begin{align}\label{second}
		\mathcal{W}_{2,\|\cdot\|_{\infty}}^2(\mathbb{P}_{(\widetilde{Y}^1,\cdots,\widetilde{Y}^m)},\mathbb{P}_{(\widehat{Y}^1,\cdots,\widehat{Y}^m)})
+
	\mathcal{W}_{2,L^2}^2(\mathbb{P}_{(\widetilde{Z}^{1,1},\cdots,\widetilde{Z}^{m,m})},\mathbb{P}_{(\widehat{Z}^{1,1},\cdots,\widehat{Z}^{m,m})})		\leq C\frac{m}{n^2},
	\end{align}
	where \(C>0\) depends on 
	\(T, L_B, L_G, r_U, \|\phi_U\|_{\infty}, \|D_x \phi_U \Sigma\|_{\infty}, |\Sigma|\).
\end{lemma}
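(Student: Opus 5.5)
We will use synchronous coupling on the particle probability space: by exchangeability, both Wasserstein distances in \eqref{second} are bounded by \(m\) times
\begin{align*}
\mathbb E\Big[\sup_{t\in[0,T]}|\widetilde Y_t^1-\widehat Y_t^1|^2+\int_0^T|\widetilde Z_t^{1,1}-\widehat Z_t^{1,1}|^2dt\Big],
\end{align*}
so it suffices to show that this single-particle quantity is \(O(n^{-2})\). No BSDE argument is needed here: both processes are explicit functions of \((t,X_t^i)\) and of a measure argument. By \autoref{asp4.2}(i), \(U(t,x,\mu)=G(t,x,\Phi_t(\mu))\), and hence \(D_xU(t,x,\mu)=D_xG(t,x,\Phi_t(\mu))\). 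Therefore
\begin{align*}
\widetilde Y_t^i-\widehat Y_t^i=G(t,X_t^i,A_t^n)-G(t,X_t^i,A_t).
\end{align*}
Taylor-expanding in \(a\) to second order around \(A_t\), the linear term vanishes by the cancellation condition (ii). The remainder is bounded by \(\tfrac12 L_G|A_t^n-A_t|^2\), uniformly in \(x\).

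The diagonal integrand splits into two pieces:
\begin{align*}
\widetilde Z_t^{i,i}-\widehat Z_t^{i,i}=\big(D_xG(t,X_t^i,A_t^n)-D_xG(t,X_t^i,A_t)\big)\Sigma+\tfrac1n D_\mu U(t,X_t^i,\theta_t^n)(X_t^i)\Sigma.
\end{align*}
The second piece is at most \(L_U|\Sigma|/n\) pointwise. For the first piece we expand in \(a\) once more, using \(D_aD_xG=D_xD_aG\). Since \(D_xD_aG(t,x,A_t)=0\), obtained by differentiating (ii) in \(x\), and \(|D_xD_a^2G|\le L_G\), this piece is bounded by \(C|A_t^n-A_t|^2\). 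Collecting terms, we get
\begin{align*}
\sup_t|\widetilde Y_t^i-\widehat Y_t^i|^2+\int_0^T|\widetilde Z_t^{i,i}-\widehat Z_t^{i,i}|^2dt\le C\sup_t|A_t^n-A_t|^4+\frac{C}{n^2}.
\end{align*}
(If the paper's constant is meant to avoid \(L_U\), one can instead write \(D_\mu U(t,x,\mu)(v)=D_aG(t,x,\Phi_t(\mu))D_x\phi_U(t,v)\), which is bounded by \(L_G\|D_x\phi_U\|_\infty\); this is consistent with the stated dependencies.)

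The main step is therefore the fourth-moment bound \(\mathbb E\sup_t|A_t^n-A_t|^4\le C/n^2\), the replaceable condition in \autoref{reamasp4.2}(2). We will derive it from (iv)--(v) as follows.
\begin{enumerate}[(a)]
\item Apply It\^o's formula to \(\phi_U(t,X_t^j)\) and average over \(j\). Using (iv) with \(\mu=\theta_t^n\), and noting that the particle drift is \(b(t,X_t^j,\theta_t^n)\), we obtain
\begin{align*}
dA_t^n=B(t,A_t^n)dt+dM_t^n,\qquad M_t^n=\frac1n\sum_j\int_0^tD_x\phi_U(s,X_s^j)\Sigma\,dW_s^j.
\end{align*}
\item Taking expectations in the limiting equation (or using It\^o's formula with (iv) at \(\mu=\theta_t\)) gives \(dA_t=B(t,A_t)dt\).
\item The initial difference \(A_0^n-A_0\) is an average of i.i.d.\ centered variables bounded by \(2\|\phi_U\|_\infty\), so its fourth moment is at most \(C/n^2\).
\item The quadratic variation of \(M^n\) is at most \(\|D_x\phi_U\Sigma\|_\infty^2T/n\), so BDG gives \(\mathbb E\sup_t|M_t^n|^4\le C/n^2\).
\item The Lipschitz property (v) and Gronwall's inequality, applied pathwise to \(|A_t^n-A_t|\le|A_0^n-A_0|+\sup_s|M_s^n|+L_B\int_0^t|A_s^n-A_s|ds\), then give the bound.
\end{enumerate}
This step matches the listed constant dependencies (\(T,L_B,r_U,\|\phi_U\|_\infty,\|D_x\phi_U\Sigma\|_\infty\)), with the factor \(r_U\) entering through the componentwise sums. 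The one point to check carefully is the quantitative form of the Taylor remainders: they need the global bounds on \(D_a^2G\) and \(D_xD_a^2G\) in (iii), which hold along the whole segment between \(A_t\) and \(A_t^n\).
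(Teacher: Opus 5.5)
Your proposal is correct and follows essentially the same route as the paper: the It\^o decomposition $dA_t^n=B(t,A_t^n)\,dt+dM_t^n$, the fourth-moment bound $\mathbb E\sup_t|A_t^n-A_t|^4\le C/n^2$ from the i.i.d.\ initial average, BDG and Gronwall, second-order Taylor expansions in $a$ using the cancellations $D_aG(t,x,A_t)=0$ and $D_xD_aG(t,x,A_t)=0$, and finally synchronous coupling. The differences are minor. You bound the correction $\tfrac1n D_\mu U\,\Sigma$ by $C/n$, whereas the paper uses $|D_aG(t,x,A_t^n)|\le L_G|A_t^n-A_t|$ to get $\tfrac Cn|A_t^n-A_t|$; this refinement is not needed for the $m/n^2$ rate. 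You also apply Gronwall pathwise rather than in expectation.
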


 \begin{proof}
 	The constant \(C\) in the following may vary from line to line.
 Applying It\^o's formula to \(A_t^n\) and using condition (iv) of \autoref{asp4.2},  we have
 \begin{align*}
 A_t^n
 =A_0^n +\int_0^t   B(s,A_s^n) ds
 +Q_t^n, \q\ 0\leq t\leq T,
 \end{align*}
 where
 \begin{align*}
Q_t^n=\frac{1}{n}\sum_{j=1}^{n} \int_0^t D_x \phi_U (s,X_s^j)\Sigma dW_s^j.
 \end{align*}
 Since \(\theta_t\) is deterministic measure, applying It\^o's formula to 
 \(\phi_U (t,X_t)\) and then taking expectations, we get
 \begin{align*}
 	A_t=A_0+\int_0^t B(s,A_s)ds,\q\ 0\leq t\leq T.
 \end{align*}
 For every \(i=1,\cdots,n\), let
 \begin{align*}
\xi^i=\phi_U(0,X_0^i)-\mathbb{E}\left[\phi_U (0,X_0)\right].
 \end{align*}
 Since \((\xi^i)\) is a sequence i.i.d. bounded random vectors,
 by Rosenthal's inequality, we have
 \begin{align*}
 \mathbb{E}\left[\big |\sum_{i=1}^{n}\xi^i\big|^4\right]
 \leq 
 C_4 \Bigg( \sum_{i=1}^{n}\mathbb{E}\left[|\xi^i|^4\right]
 +\Big( \sum_{i=1}^{n}\mathbb{E}\left[|\xi^i|^2\right]\Big)^2\Bigg).
 \end{align*}
Therefore, 
\begin{align*}
\mathbb{E}\left[ |A_0^n-A_0|^4\right]
=\frac{1}{n^4} \mathbb{E}\left[ \big|\sum_{i=1}^{n}\xi^i\big|^4\right]
\leq \frac{C}{n^4}
\Big(n\mathbb{E}\left[|\xi^1|^4\right]
+n^2 \Big(\mathbb{E}\left[|\xi^1|^2\right]\Big)^2\Big)
\leq C\Big( \frac{1}{n^3}+\frac{1}{n^2}\Big) \leq 
\frac{C}{n^2}.
\end{align*}
By the Burkholder--Davis--Gundy inequality,
\begin{align*}
 \mathbb{E} \left[ \sup\limits_{t\in [0,T]}
 |Q_t^n|^4 \right] \leq 
 C\mathbb{E}
 \left[\Bigg(\frac{1}{n^2}\sum_{j=1}^{n}\int_0^T \big|D_x \phi_U (s,X_s^j)\Sigma \big|^2ds\Bigg)^2 \right]
 \leq C\Bigg( \frac{T \|D_x \phi_U \Sigma\|_{\infty}^2}{n} \Bigg)^2
\leq  \frac{C}{n^2}.
\end{align*}
Moreover,  from condition (v) of \autoref{asp4.2},  the classical stability estimate for SDEs yields that
\begin{align*}
	\mathbb{E}\left[\sup\limits_{s\in [0,t]}
	|A_s^n-A_s|^4\right]\leq 
C \mathbb{E}\left[
|A_0^n-A_0|^4 +\sup\limits_{s\in [0,t]} |Q_s^n|^4
+\int_0^t  \sup\limits_{r\in [0,s]}|A_r^n-A_r|^4 ds \right],
\end{align*}
combining this with Gronwall's inequality,
\begin{align*}
	\mathbb{E}\left[\sup\limits_{t\in [0,T]}
	|A_t^n-A_t|^4\right]\leq 
	\frac{C}{n^2}.
\end{align*}
By a  second-order Taylor expansion, 
\begin{align*}
 \widetilde{Y}_t^i-\widehat{Y}_t^i
 =D_a G(t,X_t^i,A_t)(A_t^n-A_t)
 +\int_0^1 (1-r) D_a^2 G(t,X_t^i,A_t+r(A_t^n-A_t)) [A_t^n-A_t,
 A_t^n-A_t] dr.
\end{align*}
Condition (ii) of \autoref{asp4.2} implies that
\begin{align*}
	|\widetilde{Y}_t^i-\widehat{Y}_t^i|
	\leq 
	C|A_t^n-A_t|^2.
\end{align*}
Hence
\begin{align*}
\mathbb{E}\left[
\sup\limits_{t\in [0,T]}
	|\widetilde{Y}_t^i-\widehat{Y}_t^i|^2 \right]
	\leq 	C \mathbb{E}\left[\sup\limits_{t\in [0,T]}
	|A_t^n-A_t|^4\right]\leq 
	\frac{C}{n^2}.
\end{align*}
Furthermore,  the chain rule for Lions derivatives yields
\begin{align*}
D_\mu U(t,x,\mu)(\nu)
=D_a G(t,x,\Phi_t(\mu)) D_\nu \phi_U(t,\nu).
\end{align*}
Conditions (ii) and (iii) give
\begin{align*}
 D_x D_a G(t,x,A_t)=
 D_a D_x G(t,x,A_t)=0.
\end{align*}
Using a second-order Taylor expansion,
\begin{align*}
|\widetilde{Z}_t^{i,i}
-\widehat{Z}_t^{i,i}|
&\leq |\big( D_x G(t,X_t^i,A_t^n)-D_xG(t,X_t^i,A_t)\big) \Sigma|
+\frac{1}{n}|D_a G(t,X_t^i,A_t^n) D_x \phi_U (t,X_t^i)\Sigma|\\
&\leq 
C|A_t^n-A_t|^2
+\frac{C}{n}|A_t^n-A_t|.
\end{align*}
By  H\"older's inequality,
\begin{align*}
 \mathbb{E}\int_0^T |\widetilde{Z}_t^{i,i}
 -\widehat{Z}_t^{i,i}|^2 dt
& \leq C\mathbb{E}\int_0^T \Big( 
 |A_t^n-A_t|^4+\frac{1}{n^2}|A_t^n-A_t|^2\Big)dt\\
 &\leq CT\Bigg\{ \mathbb{E}\left[\sup\limits_{t\in [0,T]}
 |A_t^n-A_t|^4  \right]+\frac{1}{n^2} \Bigg( \mathbb{E}\left[\sup\limits_{t\in [0,T]}
 |A_t^n-A_t|^4\right]\Bigg)^{\frac{1}{2}}\Bigg\}\\
 &\leq C\Big( \frac{1}{n^3}+\frac{1}{n^2}\Big)\\ & \leq 
 \frac{C}{n^2}.
\end{align*} 
 Finally, the synchronous coupling gives
 \begin{align*}
 	&\	\mathcal{W}_{2,\|\cdot\|_{\infty}}^2(\mathbb{P}_{(\widetilde{Y}^1,\cdots,\widetilde{Y}^m)},\mathbb{P}_{(\widehat{Y}^1,\cdots,\widehat{Y}^m)})
 	+
 	\mathcal{W}_{2,L^2}^2(\mathbb{P}_{(\widetilde{Z}^{1,1},\cdots,\widetilde{Z}^{m,m})},\mathbb{P}_{(\widehat{Z}^{1,1},\cdots,\widehat{Z}^{m,m})})\\
 	&\leq 
 	\sum_{i=1}^{m}\mathbb{E}\left[
 	\sup\limits_{t\in [0,T]}
 	|\widetilde{Y}_t^i-\widehat{Y}_t^i|^2+\int_0^T |\widetilde{Z}_t^{i,i}
 	-\widehat{Z}_t^{i,i}|^2 dt
 	\right]\\
 &\leq C\frac{m}{n^2}.
 \end{align*}
This completes the proof.
 \end{proof}

\begin{assumption}\label{asp4.3}\rm 
	 Let
	\begin{align*}
			V_t: \mathbb{R}^d\times \mathcal{P}_2(\mathbb{R}^d)\longmapsto \mathbb{R}^d,\q\ 
		V_t(x,\mu)\triangleq b(t,x,\mu).
	\end{align*}
	\begin{enumerate}[(i)]
	\item 
 \( V_t \in C^6_{bd}(\mathbb{R}^d\times \mathcal{P}_2(\mathbb{R}^d))\),
 uniformly in \(t\).
\item
\(\mathbb{P}_{X_0}\) satisfies a \(T_1\) transport inequality:
there exists a constant \(C_{T_1}>0\) such that for every \(\theta \in \mathcal{P}_1(\mathbb{R}^d)\),
\begin{align*}
 \mathcal{W}_1^2(\mathbb{P}_{X_0},\theta)
 \leq 2C_{T_1}	H(\theta|\mathbb{P}_{X_0}).
\end{align*}
\item
 \(\Sigma \in \mathbb{R}^{d\times d}\)
 is constant and invertible.
 	\end{enumerate}
\end{assumption}
		 
			\begin{remark}\label{sharpforsde}\sl 
				Although the result of Arnese and Lacker \cite[Corollary 2.4, Remarks 2.7 and 3.6]{man_sha_26} is stated for a scalar isotropic diffusion
				coefficient, it applies to the present constant invertible matrix \(\Sigma\) after the linear transformation:
				\(x\longmapsto \Sigma^{-1}x\).
				 Indeed, the transformed system becomes 
				 \begin{align*}
d\bar{X}_t^{i}=\bar{b}(t,\bar{X}_t^i,\bar{\theta}_t^n)dt+dW_t^i,\q\ 
0\leq t\leq T,
				 \end{align*}
				 where for any \((t,x,\mu)\in [0,T] \times \mathbb{R}^d \times \mathcal{P}_2(\mathbb{R}^d)\),
				 \begin{align*}
		 \bar{b}(t,x,\mu)=\Sigma^{-1} b(t,\Sigma x,\Sigma_{\#} \mu).
				 \end{align*}
	Under \autoref{asp4.3}, 			 
	\(\bar{V}_t(x,\mu)\triangleq \bar{b}(t,x,\mu) \in C_{bd}^6(\mathbb{R}^d \times \mathcal{P}_2(\mathbb{R}^d))\), 
	where the transformed bounds depend additionally   on \(\|\Sigma\|_{op}\)
	and \(\|\Sigma^{-1}\|_{op}\);
	\(\mathbb{P}_{\bar{X}_0}\) satisfies a \(T_1\) transport inequality with the constant \(\bar{C}_{T_1}=\|\Sigma^{-1}\|^2_{op}C_{T_1}\);
		and the path-space Wasserstein estimate can be transferred back through the Lipschitz continuity:
		\begin{align*}
	 \mathcal{W}_{2,\|\cdot\|_{\infty}}^2
	 (\mathbb{P}_{(X^1,\cdots,X^m)}, (\mathbb{P}_{X})^{\otimes m})
	 \leq \|\Sigma\|_{op}^2 \mathcal{W}_{2,\|\cdot\|_{\infty}}^2
	 (\mathbb{P}_{(\bar{X}^1,\cdots,\bar{X}^m)}, (\mathbb{P}_{\bar{X}})^{\otimes m}).
		\end{align*}
		Hence,
		there exists a constant \(C>0\), depending on \(\mathbb{P}_{X_0}, C_{T_1}, 	\|\Sigma\|_{op}, \|\Sigma^{-1}\|_{op}, T\)
		and the bounds on the derivatives of   \(b\), such that,
		for all \(1\leq m\leq n,\)  
			\begin{align}\label{sdesharp}
				\mathcal{W}_{2,\|\cdot\|_{\infty}}^2(\mathbb{P}_{(X^1,\cdots,X^m)},(\mathbb{P}_X)^{\otimes m})
			\leq C\frac{m^2}{n^2}.
			\end{align}
		\end{remark}
		
		\begin{lemma}\label{final}\sl 
		Assume that \autoref{asp4.1} and \autoref{asp4.3} hold. 
		Then,
		   for every \(1\leq m\leq n,\) 
			\begin{align}\label{third}
				\mathcal{W}_{2,\|\cdot\|_{\infty}}^2(\mathbb{P}_{(\widehat{Y}^1,\cdots,\widehat{Y}^m)},(\mathbb{P}_{Y})^{\otimes m})
				+	\mathcal{W}_{2,L^2}^2(\mathbb{P}_{(\widehat{Z}^{1,1},\cdots,\widehat{Z}^{m,m})},(\mathbb{P}_{Z})^{\otimes m})
				\leq C\frac{m^2}{n^2},
			\end{align}
			where \(C>0\) depends on \(\mathbb{P}_{X_0}, C_{T_1}, T, L_U, 	\|\Sigma\|_{op}, \|\Sigma^{-1}\|_{op}\)
			and the bounds on the derivatives of  \(b\).
		\end{lemma}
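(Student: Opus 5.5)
The plan is to realize both sides of \eqref{third} as images of forward path laws under one Lipschitz map, and then push forward the sharp forward estimate \eqref{sdesharp} of \autoref{sharpforsde}. Since \(\theta_t=\mathbb P_{X_t}\) is deterministic, \(\widehat Y^i_t=U(t,X^i_t,\theta_t)\) and \(\widehat Z^{i,i}_t=D_xU(t,X^i_t,\theta_t)\Sigma\) are deterministic functionals of the single path \(X^i\). By \autoref{asp4.1} and uniqueness in \autoref{exanduni}, the limit satisfies \(Y_t=U(t,X_t,\theta_t)\) and \(Z_t=D_xU(t,X_t,\theta_t)\Sigma\). Define on \(C([0,T];\mathbb R^d)\) the maps
\begin{align*}
\Psi_Y(x)=\big(U(t,x_t,\theta_t)\big)_{t\in[0,T]},\qquad
\Psi_Z(x)=\big(D_xU(t,x_t,\theta_t)\Sigma\big)_{t\in[0,T]}.
\end{align*}
Then \(\mathbb P_{(\widehat Y^1,\ldots,\widehat Y^m)}=(\Psi_Y^{\otimes m})_\#\mathbb P_{(X^1,\ldots,X^m)}\) and \((\mathbb P_Y)^{\otimes m}=(\Psi_Y^{\otimes m})_\#(\mathbb P_X)^{\otimes m}\), and the same identities hold for \(\Psi_Z\).

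The next step is to verify the Lipschitz constants. Since \(|D_xU|\leq L_U\), we have \(\|\Psi_Y(x)-\Psi_Y(x')\|_\infty\leq L_U\|x-x'\|_\infty\). Since \(|D_x^2U|\leq L_U\), we have
\begin{align*}
\|\Psi_Z(x)-\Psi_Z(x')\|_{L^2}\leq L_U|\Sigma|\sqrt T\,\|x-x'\|_\infty.
\end{align*}
With the Euclidean norm across particle coordinates, the product maps satisfy the same bounds. For the sup norm I would compare pointwise in \(t\) and take the supremum, using \(\sup_t|\cdot|\) of the vector, which is fine. For the \(Z\) map, \(\sum_i\int|\cdot|^2\leq C\int\sum_i|x^i_t-x'^i_t|^2dt\leq CT\|x-x'\|_\infty^2\).

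To conclude, let \(\pi\) be an optimal coupling for \(\mathcal W_{2,\|\cdot\|_\infty}(\mathbb P_{(X^1,\ldots,X^m)},(\mathbb P_X)^{\otimes m})\). Its image under \((\Psi^{\otimes m}_Y,\Psi^{\otimes m}_Y)\) is a coupling of the \(\widehat Y\)-law and \((\mathbb P_Y)^{\otimes m}\), and similarly for \(Z\). Therefore the left side of \eqref{third} is at most
\begin{align*}
(L_U^2+L_U^2|\Sigma|^2T)\,\mathcal W_{2,\|\cdot\|_\infty}^2(\mathbb P_{(X^1,\ldots,X^m)},(\mathbb P_X)^{\otimes m}).
\end{align*}
By \eqref{sdesharp} this is \(\leq Cm^2/n^2\).

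The only delicate point is measurability of \(\Psi_Z\) into \(L^2\) and the identification of the limit law via the decoupling field. Joint continuity from \autoref{asp4.1} settles the first, and the second was established before \autoref{empirical}. The main input is \autoref{sharpforsde}, which is taken as given.
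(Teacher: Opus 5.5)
Your proposal is correct and follows the paper's proof essentially verbatim. The paper also writes $\widehat Y^i=\mathcal{T}_Y(X^i)$, $\widehat Z^{i,i}=\mathcal{T}_Z(X^i)$ and $Y=\mathcal{T}_Y(X)$, $Z=\mathcal{T}_Z(X)$ for the same Lipschitz path maps, pushes forward a coupling of the forward path laws, and concludes with \eqref{sdesharp}. The only cosmetic difference is that the paper bounds the $Z$-map using $\|\Sigma\|_{op}$ and $\|x-x'\|_{L^2}$ before passing to $T\|x-x'\|_\infty^2$. You use the Frobenius norm $|\Sigma|$ and the sup norm directly, which only changes the constant.
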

		\begin{proof}
		Let
			\begin{align*}
	&\mathcal{T}_Y: C([0,T];\mathbb{R}^d)\longmapsto C([0,T];\mathbb{R}^k),\q\ 
(	 \mathcal{T}_Y(x))_t=U(t,x_t,\theta_t);\\
	&\mathcal{T}_Z:C([0,T];\mathbb{R}^d)\longmapsto L^2([0,T];\mathbb{R}^{k\times d}),\q\ (\mathcal{T}_Z(x))_t= D_xU(t,x_t,\theta_t)\Sigma.
			\end{align*}
			By the boundedness of \(D_x U\) and \(D_x^2 U\), we have
			\begin{align*}
	\|\mathcal{T}_Y(x)-\mathcal{T}_Y(x')\|_{\infty}
	\leq L_U \|x-x'\|_{\infty}\q\ \hbox{and} \q\ 
		\|\mathcal{T}_Z(x)-\mathcal{T}_Z(x')\|_{L^2}\leq 
		L_{U}\|\Sigma\|_{op}\|x-x'\|_{L^2}.
			\end{align*}
			By   \autoref{exanduni}, the uniqueness of BSDE \eqref{y1n} yields
			\begin{align*}
		 Y=\mathcal{T}_Y(X) \q\ \hbox{and} \q\ 
		 Z=\mathcal{T}_Z(X).
		 			\end{align*}
			Moreover,
			\begin{align*}
(\mathbb{P}_Y)^{\otimes m}&=(\mathcal{T}_Y^{\otimes m})_{\#}(\mathbb{P}_X)^{\otimes m},\q\ 
\mathbb{P}_{(\widehat{Y}^1,\cdots,\widehat{Y}^m)}
=(\mathcal{T}_Y^{\otimes m})_{\#}\mathbb{P}_{(X^1,\cdots,X^m)},\\
(\mathbb{P}_Z)^{\otimes m}&=(\mathcal{T}_Z^{\otimes m})_{\#}(\mathbb{P}_X)^{\otimes m},\q\ 
\mathbb{P}_{(\widehat{Z}^{1,1},\cdots,\widehat{Z}^{m,m})}=(\mathcal{T}_Z^{\otimes m})_{\#}\mathbb{P}_{(X^1,\cdots,X^m)}.
			\end{align*}
			For each \(i=1,\cdots,n\),
			since \(\widehat{Y}^i=\mathcal{T}_Y(X^i)\)  and
			\( \widehat{Z}^{i,i}=\mathcal{T}_Z(X^i)\),  we deduce that
			\begin{align*}
		\mathcal{W}_{2,\|\cdot\|_{\infty}}^2(\mathbb{P}_{(\widehat{Y}^1,\cdots,\widehat{Y}^m)},(\mathbb{P}_{Y})^{\otimes m})
	& \leq L_U^2 
	 \mathcal{W}_{2,\|\cdot\|_{\infty}}^2(\mathbb{P}_{(X^1,\cdots,X^m)},(\mathbb{P}_{X})^{\otimes m});\\
		\mathcal{W}_{2,L^2}^2(\mathbb{P}_{(\widehat{Z}^{1,1},\cdots,\widehat{Z}^{m,m})},(\mathbb{P}_{Z})^{\otimes m})
	& \leq L_{U}^2 \|\Sigma\|_{op}^2 T
	 \mathcal{W}_{2,\|\cdot\|_{\infty}}^2(\mathbb{P}_{(X^1,\cdots,X^m)},(\mathbb{P}_{X})^{\otimes m}).
			\end{align*}
			Combining this with \eqref{sdesharp}, we complete the proof.
		\end{proof}

	 The three comparisons also control the joint law of the forward and backward paths.
For \(\mathbf x\in C([0,T];(\mathbb R^d)^m)\),
\(\mathbf y\in C([0,T];(\mathbb R^k)^m)\), and
\(\mathbf z\in L^2(0,T;(\mathbb R^{k\times d})^m)\), equip the product space with the metric
\begin{align*}
 d_m^2((\mathbf x,\mathbf y,\mathbf z),(\mathbf x',\mathbf y',\mathbf z'))
 =\|\mathbf x-\mathbf x'\|_\infty^2+
 \|\mathbf y-\mathbf y'\|_\infty^2+\|\mathbf z-\mathbf z'\|_{L^2}^2.
\end{align*}
We identify a block of triples with its three coordinate vectors.
Write \(\mathcal W_{2,d_m}\) for the resulting Wasserstein distance and set
\begin{align*}
 F_{m,n}=\mathcal W_{2,\|\cdot\|_\infty}^2
 (\mathbb P_{(X^1,\ldots,X^m)},(\mathbb P_X)^{\otimes m}),
 \qquad
 \mathbf Q^{m,n}=((X^i,Y^i,Z^{i,i}))_{i=1}^m.
\end{align*}

\begin{theorem}\label{sharpuperr}\sl
Assume that \autoref{lipp} holds with \(p=q=2\), and that
\autoref{asp4.1} and \autoref{asp4.2} hold. Then, for every \(1\leq m\leq n\) and \(1\leq i\leq n\),
\begin{align*}
 \mathcal W_{2,d_m}^2(\mathbb P_{\mathbf Q^{m,n}},
( \mathbb P_{(X,Y,Z)})^{\otimes m})
 \leq C\left(F_{m,n}+\frac{m}{n^2}\right),
 \qquad
 \mathbb E\int_0^T\sum_{j\ne i}|Z_t^{i,j}|^2dt\leq\frac C{n^2}.
\end{align*}
Here \(C\) depends only on \(d,k,T, |\Sigma|\) and the bounds in these assumptions,
and is independent of \(m,n,i\).
If \autoref{asp4.3} also holds, the joint-law error is at most \(Cm^2/n^2\); in particular,
\begin{align}\label{wconver2}
 \mathcal{W}_{2,\|\cdot\|_{\infty}}^2(\mathbb{P}_{(Y^1,\cdots,Y^m)},(\mathbb{P}_{Y})^{\otimes m})
 +\mathcal{W}_{2,L^2}^2(\mathbb{P}_{(Z^{1,1},\cdots,Z^{m,m})},(\mathbb{P}_Z)^{\otimes m})
 \leq C\frac{m^2}{n^2}.
\end{align}
The constant in this last assertion may additionally depend on
\(\mathbb P_{X_0}\),
\(  C_{T_1},   \|\Sigma^{-1}\|_{op}\),
 and the derivative bounds in \autoref{asp4.3}.
\end{theorem}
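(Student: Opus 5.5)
The plan is to chain the three comparisons of \autoref{empirical}, \autoref{middle} and \autoref{final}, but at the level of the joint triple $(X^i,Y^i,Z^{i,i})$, and to assemble them with the triangle inequality for $\mathcal W_{2,d_m}$. Let $\widehat{\mathbf Q}^{m,n}=((X^i,\widehat Y^i,\widehat Z^{i,i}))_{i=1}^m$, which lives on the same probability space as $\mathbf Q^{m,n}$. Under the synchronous coupling the forward coordinates agree exactly. Hence
\begin{align*}
 \mathcal W_{2,d_m}^2(\mathbb P_{\mathbf Q^{m,n}},\mathbb P_{\widehat{\mathbf Q}^{m,n}})
 \leq 2\sum_{i=1}^m\mathbb E\Big[\|Y^i-\widetilde Y^i\|_\infty^2+\|\widetilde Y^i-\widehat Y^i\|_\infty^2
 +\|Z^{i,i}-\widetilde Z^{i,i}\|_{L^2}^2+\|\widetilde Z^{i,i}-\widehat Z^{i,i}\|_{L^2}^2\Big].
\end{align*}
The per-particle bounds \eqref{empirical-coupling} and those obtained inside the proof of \autoref{middle} (both of order $C/n^2$ for every $i$) make the right-hand side at most $Cm/n^2$.

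Next I would treat $\widehat{\mathbf Q}^{m,n}$ as a push-forward. Define $\mathcal T(x)=(x,\mathcal T_Y(x),\mathcal T_Z(x))$ with $\mathcal T_Y,\mathcal T_Z$ from the proof of \autoref{final}. By the bounds on $D_xU$ and $D_x^2U$, $\mathcal T$ is Lipschitz from $(C([0,T];\mathbb R^d),\|\cdot\|_\infty)$ into the triple space. The constant depends only on $L_U$, $|\Sigma|$ and $T$, since $\|\cdot\|_{L^2}\leq\sqrt T\|\cdot\|_\infty$. We have $\widehat{\mathbf Q}^{m,n}=\mathcal T^{\otimes m}(X^1,\dots,X^m)$. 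By $Y=U(t,X_t,\theta_t)$ and $Z=D_xU(t,X_t,\theta_t)\Sigma$, we also have $(\mathbb P_{(X,Y,Z)})^{\otimes m}=(\mathcal T^{\otimes m})_\#(\mathbb P_X)^{\otimes m}$. Pushing an optimal coupling for $F_{m,n}$ forward through $\mathcal T^{\otimes m}$ gives a bound $CF_{m,n}$. The triangle inequality, $(a+b)^2\leq2a^2+2b^2$, then yields the joint estimate $C(F_{m,n}+m/n^2)$.

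The off-diagonal bound is the step needing new input, since \autoref{empirical} only gives $C/n$. I would split $Z^{i,j}=\triangle Z^{i,j}+\widetilde Z^{i,j}$. The row sum of $\triangle Z^{i,j}$ is $\leq C/n^2$ by \eqref{empirical-coupling}. For $j\neq i$, the chain rule of \autoref{middle} gives
\begin{align*}
\widetilde Z^{i,j}_t=\frac1n D_aG(t,X_t^i,A_t^n)D_x\phi_U(t,X_t^j)\Sigma .
\end{align*}
Condition (ii) of \autoref{asp4.2} gives $D_aG(t,x,A_t)=0$, and the bound on $D_a^2G$ then gives $|D_aG(t,X_t^i,A_t^n)|\leq L_G|A_t^n-A_t|$. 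So $\sum_{j\neq i}|\widetilde Z_t^{i,j}|^2\leq C n^{-1}|A_t^n-A_t|^2$. The fourth-moment estimate $\mathbb E\sup_t|A_t^n-A_t|^4\leq C/n^2$ proved in \autoref{middle}, via Cauchy--Schwarz, gives $\mathbb E|A_t^n-A_t|^2\leq C/n$. Hence this part is also $\leq C/n^2$. This use of cancellation in the off-diagonal row is what I expect to be the main point to check carefully, in particular that the fourth-moment bound holds uniformly in $t$ under conditions (iv)--(v).

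Finally, under \autoref{asp4.3}, \eqref{sdesharp} of \autoref{sharpforsde} gives $F_{m,n}\leq Cm^2/n^2$, and $m/n^2\leq m^2/n^2$. The marginal bound \eqref{wconver2} follows because the projections onto the $\mathbf y$ and $\mathbf z$ blocks are $1$-Lipschitz for $d_m$.
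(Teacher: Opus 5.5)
Your proposal is correct and follows the paper's proof essentially step for step. The steps are the same: synchronous couplings with identical forward coordinates for the first two comparisons (you merge them into one estimate via \eqref{empirical-coupling} and the per-particle bounds from the proof of \autoref{middle}), the Lipschitz block push-forward of an optimal forward coupling for the last comparison, the cancellation bound $|D_aG(t,x,A_t^n)|\leq L_G|A_t^n-A_t|$ together with the fourth-moment estimate of \autoref{middle} for the off-diagonal rows, and finally \eqref{sdesharp} plus projection onto the $\mathbf y,\mathbf z$ blocks. The only point worth stating explicitly is why $\mathcal T^{\otimes m}$ is Lipschitz for $d_m$: the bounds on $D_xU$ and $D_x^2U$ hold pointwise in time, which yields the paper's squared constant $1+L_U^2+TL_U^2\|\Sigma\|_{op}^2$; per-path Lipschitz continuity of $\mathcal T$ alone would not give this.
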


\begin{proof}
Keep the forward coordinates unchanged in the first two comparisons.
The synchronous couplings in \autoref{empirical} and \autoref{middle}
bound each resulting joint squared error by \(Cm/n^2\).
For the last comparison, apply an arbitrary coupling of the forward path laws to the block map
\begin{align*}
 (\mathbf x_t)_{t\in [0,T]}\longmapsto
 \big((x_t^i,U(t,x_t^i,\theta_t),D_xU(t,x_t^i,\theta_t)\Sigma)_{i=1}^m\big)_{t\in [0,T]}.
\end{align*}
The squared Lipschitz constant of this map is at most
\(1+L_U^2+T L_U^2\|\Sigma\|_{op}^2\).
Taking the infimum over the forward couplings and using the triangle inequality
gives the asserted joint-law transfer bound.

To prove the off-diagonal bound, the representation in \autoref{asp4.2} gives
\begin{align*}
 D_\mu U(t,x,\theta_t^n)(v)
 =D_aG(t,x,A_t^n)D_v\phi_U(t,v),\qquad
 |D_aG(t,x,A_t^n)|\leq L_G|A_t^n-A_t|.
\end{align*}
The fourth-moment estimate in the proof of \autoref{middle} implies
\(\mathbb E\left[\sup\limits_{t\in [0,T]}|A_t^n-A_t|^2\right]\leq \frac{C}{n}\).
Thus the off-diagonal row of \(\widetilde Z\) has expected squared
\(L^2\)-norm at most \(C/n^2\); \eqref{empirical-coupling} gives the same bound for \(Z\).
Finally, \eqref{sdesharp} yields the sharp joint-law bound under
\autoref{asp4.3}. The sum of the two marginal transport costs is bounded by
the joint transport cost, which proves \eqref{wconver2}.
\end{proof}

The same argument quantifies approximate cancellation. If condition (ii) in
\autoref{asp4.2} is replaced by
\begin{align*}
 \sup_{t\in [0,T],x\in \mathbb{R}^d}\Big(|D_aG(t,x,A_t)|+|D_xD_aG(t,x,A_t)|\Big)\leq\varepsilon,
\end{align*}
while all other assumptions and bounds are retained, then
\begin{align*}
 \mathcal W_{2,d_m}^2(\mathbb P_{\mathbf Q^{m,n}},(\mathbb P_{(X,Y,Z)})^{\otimes m})
 &\leq C\left(F_{m,n}+\frac m{n^2}+\frac{\varepsilon^2m}{n}\right),\\
 \mathbb E\int_0^T\sum_{j\ne i}|Z_t^{i,j}|^2dt
 &\leq C\left(\frac1{n^2}+\frac{\varepsilon^2}{n}\right).
\end{align*}
Indeed, the increments of \(G\) and \(D_xG\) in the proof of \autoref{middle} are bounded by
\(C(\varepsilon|A_t^n-A_t|+|A_t^n-A_t|^2)\).
The diagonal correction is bounded by \(Cn^{-1}(\varepsilon+|A_t^n-A_t|)\), since \(D_\mu U=D_aG\,D_v\phi_U\); the same bound applies to each off-diagonal intermediate integrand.
The same second- and fourth-moment bounds give the result, with \(C\) independent
of \(\varepsilon,m,n\) when the remaining bounds are fixed.

	\autoref{sharpuperr} provides an upper bound of order \(m^2/n^2\). This rate is free from the dimension-dependent empirical-measure rates that arise in classical Wasserstein estimates.
	We next give two examples satisfying all the assumptions of \autoref{sharpuperr}.
	They show 
	 that the rate \(m^2/n^2\) is attained
	  by  the \(Y\)- and \(Z\)-components, respectively.
	  In these examples the
	  decoupling field is independent of the measure, so the cancellation
	  condition holds trivially. Nontrivial measure dependence satisfying that
	  condition is illustrated separately in  \autoref{exasp4.2}.
	
	\begin{example}\label{lowerfory}\sl 
	 Let \(d=k=1\), \(\sigma>0,  \tau> 0\).
	 Let \((X_0^{i,n})_{i=1}^n\) be i.i.d. with distribution \(  \mathcal{N}(0,\tau^2)\), independent of the Brownian motions \((W^i)_{i=1}^n\). Set
	 \(b(t,x,\mu)=\int_{\mathbb{R}}v \mu (dv)\) and  
	 consider the \(n\)-particle system:
	 \begin{align*}
	  dX_t^{i,n}=\frac{1}{n}\sum_{j=1}^{n}X_t^{j,n}dt
	  +\sigma dW_t^i,\q\ 0\leq t\leq T.
	 \end{align*}
	The corresponding McKean--Vlasov equation  reduces to 
  \begin{align*}
  dX_t=\sigma dW_t,\q\ 0\leq 
  t\leq T.
  \end{align*}
  Since the system is linear and the initial data are Gaussian,
  for every \(1\leq m\leq n\), 
  \((X_t^{1,n},\cdots,X_t^{m,n})\) is a centered Gaussian vector. 
  A direct covariance calculation gives
  \begin{align*}
  \operatorname{Cov}	(X_t^{1,n},\cdots,X_t^{m,n})
  	=q_t I_m+\frac{s_t}{n}\mathbf{1}_m \mathbf{1}_m^\top,
  \end{align*}
    where 
  \begin{align*}
  	q_t&=\tau^2+\sigma^2 t,\q\ 
  	s_t=\tau^2 (e^{2t}-1)+\sigma^2 \big( \frac{e^{2t}-1}{2}-t\big),\q\
  	\mathbf{1}_m=(1,\cdots,1)^\top,\q\ 
  	I_m=\operatorname{diag}(1, 1, \dots, 1).
  \end{align*}
  Moreover, \((\mathbb{P}_{X_t})^{\otimes m}\)
  is the centered Gaussian distribution with covariance matrix \(q_t I_m\).
  Thus the Gaussian Wasserstein formula yields that
  \begin{align*}
&\  \mathcal{W}_{2}^2(\mathbb{P}_{(X_t^{1,n},\cdots,X_t^{m,n})},(\mathbb{P}_{X_t})^{\otimes m})
  = \Big( \sqrt{q_t+s_t \frac{m}{n}} -\sqrt{q_t}\Big)^2, \q\ 
  0<t\leq T.
  \end{align*}
  For every \((t,x,\theta,y,\mu,z,\nu)\in 
  [0,T]\times \mathbb{R}\times \mathcal{P}_2(\mathbb{R})
  \times \mathbb{R}\times \mathcal{P}_2(\mathbb{R})
  \times \mathbb{R}\times \mathcal{P}_2(\mathbb{R}),\) let
  \begin{align*}
 h(x,\theta)=(1+T)x\q\ 
 \hbox{and} \q\ 
 f(t,x,\theta,y,\mu,z,\nu)
 =-x-(1+t)\int_{\mathbb{R}}v \theta (dv).
  \end{align*}
  The corresponding decoupling field is given by \(U(t,x,\mu)=(1+t)x\),
  then \(D_x^2 U=0\) and all the measure derivatives of \(U\) vanish, the diffusion and measure-derivative terms in the master equation are zero,
  we derive
\begin{align*}
&\ \partial_t U(t,x,\theta)+D_x U(t,x,\theta)b(t,x,\theta)
+f(t,x,\theta,U(t,x,\theta),m^U(t,\theta),D_x U(t,x,\theta)\sigma,m^{\mathcal{Z}}(t,\theta))\\
&=x+ (1+t)\int_{\mathbb{R}} v\theta (dv)-x-(1+t)\int_{\mathbb{R}}v \theta (dv)=0,\q\ 
U(T,x,\theta)=(1+T)x=h(x,\theta).
\end{align*}
  By It\^o's formula and the uniqueness of BSDEs, we get
  \begin{align*}
Y_t^{i,n}=(1+t)X_t^{i,n},\q\ 
Z_t^{i,j,n}=\mathbf{1}_{\{i=j\}} \sigma (1+t),\q\ 
Y_t=(1+t)X_t,\q\ Z_t=\sigma (1+t).
  \end{align*}
  The above coefficients  satisfy \autoref{lipp} with \(p=q=2\).
  \(U(t,x,\mu)\) satisfies \autoref{asp4.1}, and  \autoref{asp4.2} holds by taking 
  \(r_U=1, \phi_U=0, G(t,x,a)=(1+t)x,\) and \(B=0\).
  Since \(D_\mu b=1\) and all its higher-order derivatives vanish, \(b(t,x,\mu)\in C_{bd}^6\). Moreover, the Gaussian law \(\mathcal{N}(0,\tau^2)\) satisfies \(T_1(\tau^2)\) inequality and \(\Sigma=\sigma I_1\), \autoref{asp4.3} is also satisfied.
For all \(1\leq m\leq n,\)
\begin{align*}
 \mathcal{W}_2^2 (\mathbb{P}_{(Y_t^1,\cdots,Y_t^m)},(\mathbb{P}_{Y_t})^{\otimes m} )
=(1+t)^2 \Big( \sqrt{q_t+s_t \frac{m}{n}} -\sqrt{q_t}\Big)^2
=(1+t)^2 \frac{s_t^2 \frac{m^2}{n^2}}{\Big( \sqrt{q_t+s_t \frac{m}{n}} +\sqrt{q_t}\Big)^2}.
\end{align*}
Hence we obtain
  \begin{align*}
  	  \mathcal{W}_{2,\|\cdot\|_{\infty}}^2 (\mathbb{P}_{(Y^1,\cdots,Y^m)},(\mathbb{P}_{Y})^{\otimes m} )
  	  \geq 
  	   \mathcal{W}_2^2 (\mathbb{P}_{(Y_t^1,\cdots,Y_t^m)},(\mathbb{P}_{Y_t})^{\otimes m} )
  \geq 
  \frac{(1+t)^2 s_t^2 }{\Big( \sqrt{q_t+s_t} +\sqrt{q_t}\Big)^2}
  	  \frac{m^2}{n^2}
  	  \triangleq c_t \frac{m^2}{n^2}.
  \end{align*}
 Furthermore,  since \(Z_t^{i,i,n}=\sigma(1+t)=Z_t\), 
  \begin{align*}
  \mathcal{W}_{2,L^2}^2 (\mathbb{P}_{(Z^{1,1,n},\cdots,Z^{m,m,n})},(\mathbb{P}_{Z})^{\otimes m})=0.
  \end{align*}
Combining the lower bound with the upper bound in  \autoref{sharpuperr},  we derive
	\begin{align*}
c_T  \frac{m^2}{n^2}\leq 	 \mathcal{W}_{2,\|\cdot\|_{\infty}}^2 (\mathbb{P}_{(Y^1,\cdots,Y^m)},(\mathbb{P}_{Y})^{\otimes m} ) 
\leq C_T \frac{m^2}{n^2},\q\ 
	 1\leq m\leq n,
	\end{align*}
	where \(C_T\) is a constant from \eqref{wconver2} and \(0<c_T\leq C_T<\infty\).
Consequently,  the order \(m^2/n^2\) in \autoref{sharpuperr}
 is optimal  for the combined estimate appearing in \eqref{wconver2}, and is already attained by the \(Y\)-component.
 	\end{example}

 	\begin{example}\label{lowerforz}\sl 
 			 Let \(d=k=1\), \(\lambda, \sigma, \tau> 0\).
 		Let \((X_0^{i,n})_{i=1}^n\) be i.i.d. with distribution \(  \mathcal{N}(0,\tau^2)\), independent of the Brownian motions \((W^i)_{i=1}^n\). Set
 		\(b(t,x,\mu)=\lambda \tanh \Big( \int_{\mathbb{R}}v \mu (dv)\Big) \)
 		and  
 		consider the following \(n\)-particle system:
 		\begin{align*}
 			dX_t^{i,n}=\lambda 
 		\tanh \Big( \frac{1}{n}\sum_{j=1}^{n}X_t^{j,n} \Big) dt
 			+\sigma dW_t^i,\q\ 0\leq t\leq T.
 		\end{align*}
 		The corresponding McKean--Vlasov equation  is 
 		\begin{align*}
 			dX_t=\sigma dW_t,\q\ 0\leq 
 			t\leq T.
 		\end{align*}
 		  For every \((t,x,\theta,y,\mu,z,\nu)\in 
 		[0,T]\times \mathbb{R}\times \mathcal{P}_2(\mathbb{R})
 		\times \mathbb{R}\times \mathcal{P}_2(\mathbb{R})
 		\times \mathbb{R}\times \mathcal{P}_2(\mathbb{R}),\) let
 		\begin{align*}
 			h(x,\theta)=-\cos x \q\ 
 			\hbox{and} \q\ 
 			f(t,x,\theta,y,\mu,z,\nu)
 			=-\lambda \sin x \tanh \Big(\int_{\mathbb{R}}v \theta (dv)\Big)
 			-\frac{\sigma^2}{2}\cos x.
 		\end{align*}
 		The corresponding decoupling field is \(U(t,x,\mu)=-\cos x\),
 		so \(D_x U=\sin x\), \(D_x^2 U=\cos x\),
 		\begin{align*}
 	 &\ D_x U(t,x,\theta)b(t,x,\theta)+\frac{\sigma^2}{2}D_x^2 U (t,x,\theta) +f(t,x,\theta,U(t,x,\theta), m^{U}(t,\theta),\sigma D_x U(t,x,\theta), m^{\mathcal{Z}}(t,\theta))\\
 	 &=\lambda \sin x \tanh \Big(\int_{\mathbb{R}}v \theta (dv)\Big)
 	 +\frac{\sigma^2}{2}\cos x
 	 -\lambda \sin x \tanh \Big(\int_{\mathbb{R}}v \theta (dv)\Big)
 	 -\frac{\sigma^2}{2}\cos x=0.
 		\end{align*}
A direct  verification shows that	\(U\) satisfies \autoref{asp4.1}, and \autoref{asp4.2} holds with \(r_U=1\), \(\phi_U=0,\) 
\(G(t,x,a)=-\cos x\), \(B=0\).
Moreover, the coefficients \(b, f, h\) satisfy \autoref{lipp} with \(p=q=2\), and  \autoref{asp4.3} holds with 
\(b\in C_{bd}^6\), \(C_{T_1}=\tau^2\),  \(\Sigma=\sigma I_1\).
It\^o's formula and
  uniqueness of the particle and limiting BSDEs yield
\begin{align*}
 Y_t^{i,n}=-\cos (X_t^{i,n}),\q\ 
 Z_t^{i,j,n}=\mathbf{1}_{\{i=j\}}\sigma \sin (X_t^{i,n}),\q\ 
 Y_t=-\cos (X_t),\q\ Z_t=\sigma \sin(X_t).
\end{align*}
Set
\begin{align*}
	&q_t=\tau^2+\sigma^2t,\q\ 
	r_t=\tau^2 e^{2\lambda t}+\frac{\sigma^2}{2\lambda}
	(e^{2\lambda t}-1), \q\ 
	M^n_t=\frac{1}{n}\sum_{j=1}^{n}X_t^{j,n},
	\q\ N_t^n=\sqrt{n}M_t^n,\\
&	Q_{s,t}= \tau^2+\sigma^2(s\wedge t),\q\ 
	R_{s,t} =\tau^2 e^{\lambda(s+t)}
	+\sigma^2\int_0^{s\wedge t} e^{\lambda (s-u)} e^{\lambda (t-u)} du,\q\ 
	B_t^n=\frac{1}{\sqrt{n}}\sum_{j=1}^{n}W^j_t.
\end{align*}
Since \((X^{1,n},\cdots,X^{n,n})
\overset{d}{=} (-X^{1,n},\cdots, -X^{n,n})\), 
\(\mathbb{E}\left[\sin (X_t^{i,n})\right]=0\).
 Let \(D_t^{i,n}=X_t^{i,n}-M_t^n\).
The centered initial coordinates and Brownian noises are Gaussian,
arising as
orthogonal projections onto the orthogonal complement of the mean
direction. 
They are independent of the initial mean and its associated Brownian
noise, which together determine \(M^n\). 
Consequently, \(D^{i,n}\) is centered Gaussian and
independent of \(M^n\), and
\begin{align*}
	\mathbb{E}\left[
	D_s^{i,n} D_t^{j,n}\right]
	=\Big(\mathbf{1}_{\{i=j\}} -\frac{1}{n}\Big)Q_{s,t}.
\end{align*}
Moreover,
\begin{align*}
 N_t^n=N_0^n+\int_0^t \lambda \sqrt{n}
 \tanh (\frac{N_s^n}{\sqrt{n}}) ds +\sigma B_t^n,
\end{align*}
where \(N_0^n \sim \mathcal{N}(0,\tau^2)\) is independent of the
standard Brownian motion
 \(B^n\).
For each \(n\), define on the same space
\begin{align*}
	d\bar N_t^n=\lambda\bar N_t^n dt+\sigma dB_t^n
	\q\ \hbox{and} \q\  \bar N_0^n=N_0^n.
\end{align*}
The law of \(\bar N^n\) is independent of \(n\), with covariance
\(R_{s,t}\). Since \(\sqrt n|\tanh(\frac{x}{\sqrt{n}})|\leq|x|\), standard moment
estimates give  \(\sup\limits_{n\geq 1}\mathbb{E}\left[ \sup\limits_{t\in [0,T]} |N_t^n|^r\right]<\infty,\) 
 for each
finite \(r\geq2\). 
The bound \(|\tanh u-u|\leq C|u|^3\) and Gronwall's
inequality then yield
\begin{align*}
 \mathbb{E}\left[
 \sup\limits_{t\in [0,T]}|N_t^n-\bar{N}^n_t|^2\right]
 \leq 
 \frac{C}{n^2}.
\end{align*}
 In particular, the second moments converge uniformly on \([0,T]^2\),
 and the fourth moments are uniformly bounded.
Since  \(X_t\sim \mathcal{N}(0,q_t)\),
\(\operatorname{Cov}(X_s,X_t)=  Q_{s,t}\).
Using the identity \(2\sin x \sin y=\cos (x-y)-\cos (x+y)\) and the fourth-order Taylor expansion for the cosine function,  we have
\begin{align*}
	 \mathbb{E}\left[ \cos (X_s^{i,n} \pm X_t^{j,n})\right]
	 =& \exp\left\{- \frac{1}{2} \operatorname{Var} (D_s^{i,n}\pm D_t^{j,n}) \right\} 
	 \mathbb{E}\left[ \cos \big(\frac{N_s^n\pm N_t^n}{\sqrt{n}} \big) \right]\\
	 =&\exp\left\{- \frac{1}{2} \operatorname{Var} (D_s^{i,n}\pm D_t^{j,n}) \right\} 
	 \Big( 1-\frac{1}{2n}\mathbb{E} \left[|N_s^n\pm N_t^n|^2\right] +O(\frac{1}{n^2})\Big).
\end{align*}
Consequently,
 on a compact set \(I^2 \subset (0,T)^2\),
 \begin{align*}
 	&n \operatorname{Cov}(\sin X_s^{i,n},\sin X_t^{j,n}) \longrightarrow 
 \kappa_{s,t}, ~\hbox{uniformly for}~ (s,t)\in I^2,
 \q i\neq j,\\
  &n	\Big(\operatorname{Cov}(\sin X_s^{i,n},\sin X^{i,n}_t)
  -	\operatorname{Cov}(\sin X_s,\sin X_t)\Big)
  \longrightarrow   \widetilde{\kappa}_{s,t},~\hbox{uniformly for}~ (s,t)\in I^2,
 \end{align*}
 where
 \begin{align*}
 &\kappa_{s,t}=e^{-\frac{q_s+q_t}{2}} (R_{s,t}-Q_{s,t}),\\
& \widetilde{\kappa}_{s,t}
 =\frac{1}{4}\Big( e^{-\frac{A_+(s,t)}{2}}(B_+(s,t)-A_+(s,t))
 -e^{-\frac{A_-(s,t)}{2}}(B_-(s,t)-A_-(s,t))
 \Big),\\
 &A_{\pm}(s,t)=q_s+q_t\pm2Q_{s,t}\q\  \hbox{and} \q\ 
 B_{\pm}(s,t)=r_s+r_t\pm2R_{s,t}.
 \end{align*}
In particular, \(\kappa_{t,t}=e^{-q_t}(r_t-q_t)>0\),
\(\widetilde{\kappa}_{t,t}=e^{-2q_t}(r_t-q_t)>0\).
Fix \(t_0\in (0,T)\).
By continuity and positivity of both kernels at
\((t_0,t_0)\), choose a nondegenerate closed interval \(I\subset(0,T)\)
containing \(t_0\), small enough that both kernels are positive on \(I^2\). 
Set
\begin{align*}
 \mathcal{V}^{i,n}=\frac{\sigma}{\sqrt{|I|}}\int_{I}\sin (X_t^{i,n}) dt\q\ \hbox{and}\q\ 
 \mathcal{V}=\frac{\sigma}{\sqrt{|I|}}\int_{I}\sin (X_t)dt.
\end{align*} 
By Fubini's theorem,
\begin{align*}
\operatorname{Var} (\mathcal{V}^{i,n})&=\frac{\sigma^2}{|I|}\int_{I} \int_{I}
 \operatorname{Cov} (\sin X_s^{i,n},\sin X_t^{i,n}) dsdt\triangleq v_n,\\
 \operatorname{Cov}(\mathcal{V}^{i,n},\mathcal{V}^{j,n})&=
 \frac{\sigma^2}{|I|}\int_{I} \int_{I}  \operatorname{Cov} (\sin X_s^{i,n},\sin X_t^{j,n}) dsdt \triangleq \hat{v}_n,\q\ i\neq j,\\
\operatorname{Var}(\mathcal{V})&=\frac{\sigma^2}{|I|}\int_{I} \int_{I}
\operatorname{Cov} (\sin X_s,\sin X_t)dsdt\triangleq v>0.
\end{align*}
For all \(1\leq m\leq n\),
\begin{align*}
 \operatorname{Cov}(\mathcal{V}^{1,n},\cdots,\mathcal{V}^{m,n})
 =(v_n-\hat{v}_n)I_m+\hat{v}_n \mathbf{1}_m \mathbf{1}_m^\top\q\ \hbox{and}\q\ 
 \operatorname{Cov}(\bar{\mathcal{V}}^1,\cdots,\bar{\mathcal{V}}^m)=v I_m,
\end{align*}
where \(\mathbb{P}_{(\bar{\mathcal{V}}^1,\cdots,\bar{\mathcal{V}}^m)}=(\mathbb{P}_\mathcal{V})^{\otimes m}\).
Applying the Gelbrich inequality (see Gelbrich \cite[Theorem 2.1]{gelbrich_90}),
\begin{align*}
 \mathcal{W}_2^2( \mathbb{P}_{(\mathcal{V}^{1,n},\cdots,\mathcal{V}^{m,n})},
 (\mathbb{P}_{\mathcal{V}})^{\otimes m}
 )
 &\geq
 \Big(\sqrt{v_n +(m-1)\hat{v}_n} -\sqrt{ v}\Big)^2+(m-1)\Big( \sqrt{v_n-\hat{v}_n} -\sqrt{v}\Big)^2\\
 &=
 \frac{(v_n-v+(m-1)\hat{v}_n)^2}{\Big(\sqrt{v_n +(m-1)\hat{v}_n} +\sqrt{ v}\Big)^2}
 +(m-1)\frac{(v_n-v-\hat{v}_n)^2}{\Big( \sqrt{v_n-\hat{v}_n} +\sqrt{v}\Big)^2}.
\end{align*}
By the choice of \(I\), the following constants are positive:
\begin{align*}
	 c_I=\frac{\sigma^2}{|I|}\int_{I} \int_{I}
	 \widetilde{\kappa}_{s,t}dsdt>0\q\ \hbox{and}\q\ 
	 \hat{c}_I=\frac{\sigma^2}{|I|}\int_{I} \int_{I}
	 \kappa_{s,t}dsdt>0.
\end{align*}
By the locally uniform expansions, 
\begin{align*}
	v_n-v=
 \frac{c_I}{n}+ o(\frac{1}{n})
 \q\ \hbox{and} \q\ 
 \hat{v}_n=
 \frac{\hat{c}_I}{n}+o(\frac{1}{n}).
\end{align*}
 Hence there exists \(n_0\in \mathbb{N}\),
for  all \(n\geq n_0,\)
\(v_n-v\geq \frac{c_I}{2n}\) and \(\hat{v}_n\geq \frac{\hat{c}_I}{2n}\).
Moreover, there exists
  a constant \(K>0\), which is independent of  \(m\) and \(n\),
\begin{align*}
 \Big(\sqrt{v_n +(m-1)\hat{v}_n} +\sqrt{ v}\Big)^2\leq K \q\ \hbox{and}\q\ 
 \Big( \sqrt{v_n-\hat{v}_n} +\sqrt{v}\Big)^2\leq K.
\end{align*}
By a  straightforward calculation,
\begin{align*}
  \mathcal{W}_2^2( \mathbb{P}_{(\mathcal{V}^{1,n},\cdots,\mathcal{V}^{m,n})},
 (\mathbb{P}_{\mathcal{V}})^{\otimes m}
 )&\geq
 \frac{m(v_n-v)^2+m(m-1)\hat{v}_n^2}{K} \\
 &\geq \frac{mc_I^2+m(m-1)\hat{c}_I^2 }{4n^2K}\\
 &\geq 
 \frac{\min \{c_I^2,\hat{c}_I^2\}}{4K}
 \frac{m^2}{n^2}\triangleq c_z\frac{m^2}{n^2}.
\end{align*}
Define 
\begin{align*}
\Lambda_I :L^2([0,T])\longmapsto \mathbb{R},\q\
\Lambda_I(z)=\frac{1}{\sqrt{|I|}}\int_{I} z_t dt.
\end{align*}
For any \(z, z'\in L^2([0,T])\), the Cauchy--Schwarz inequality gives 
\begin{align*}
	|\Lambda_I(z)-\Lambda_I(z')|
	\leq \| z-z'\|_{L^2([0,T])}.
\end{align*}
Since 
\(\mathcal{V}^{i,n}=\Lambda_I(Z^{i,i,n})\),  \(\mathcal{V}=\Lambda_I(Z)\),
\begin{align*}
	  \mathcal{W}_2^2( \mathbb{P}_{(\mathcal{V}^{1,n},\cdots,\mathcal{V}^{m,n})},
	 (\mathbb{P}_{\mathcal{V}})^{\otimes m})
	&= \mathcal{W}_2^2( \mathbb{P}_{(\Lambda_I(Z^{1,1,n}),\cdots,\Lambda_I(Z^{m,m,n}))},
	(\mathbb{P}_{\Lambda_I(Z)})^{\otimes m}
	)\\
&	\leq 
	 \mathcal{W}_{2,L^2}^2 (\mathbb{P}_{(Z^{1,1,n},\cdots,Z^{m,m,n})},(\mathbb{P}_{Z})^{\otimes m}).
\end{align*}
Combining the above lower bound with the upper bound in \autoref{sharpuperr}, we obtain
\begin{align*}
c_z \frac{m^2}{n^2}
\leq  \mathcal{W}_{2,L^2}^2 (\mathbb{P}_{(Z^{1,1,n},\cdots,Z^{m,m,n})},(\mathbb{P}_{Z})^{\otimes m})\leq C_z \frac{m^2}{n^2},\q\ 
n\geq n_0, \ 1\leq m\leq n,
\end{align*}
where \(C_z>0\) is a constant from \eqref{wconver2} and
\(0<c_z\leq C_z <\infty\).
Consequently,
this example shows that the order \(m^2/n^2\) in \autoref{sharpuperr} is also optimal for the   \(Z\)-component.
 	\end{example}
 	
\section{Weak propagation of chaos}\label{sec5}

Throughout this section,    we  restrict  the general model introduced in \autoref{sec1} to the deterministic Markovian setting, i.e.,  \(b, \sigma, f, h\) are deterministic Markovian coefficients. 
Moreover,  the diffusion coefficient \(\sigma=\sigma(t,x,\mu)\) may depend on all its displayed variables.
No uniform ellipticity is imposed, and \(\sigma \sigma^\top\)  may be degenerate.
Let \(C\) denote a generic positive constant whose value may change
from line to line. The dimensions \(d,k\) are fixed, and constants in this
section may depend on them; no dimension-uniform bound is asserted.
We also assume that \autoref{lipp} holds with \(p=q=2\) and set
\(\theta_t=\mathbb{P}_{X_t}\), \(\theta_t^n=\frac{1}{n}\sum_{j=1}^{n}\delta_{X_t^j}\).
In particular, \(\mathbb{E}\left[|X_0|^2\right]<\infty.\)
This second-moment assumption suffices for both
weak-error frameworks considered below.

In the following, we first establish an order \(n^{-1}\)
weak estimate for the intermediate Markov particles.
This estimate will subsequently be transferred to the genuine interacting BSDE particles and then to fixed finite-dimensional marginals.

Let us first recall the definition of the function class \(C_b^{3,1}(\mathbb{R}^r)\). For a given  dimension \(r\),
this space consists of all functions \(f: \mathbb{R}^r\longmapsto \mathbb{R}\)  such that 
\(f, D f, D^2 f, D^3 f\) are continuous and bounded, and
\(D^3 f\) is globally Lipschitz continuous. 
The norm is given by
\begin{align*}
	\|f\|_{3,1}
	\triangleq 
	\sum_{l=0}^{3}\|D^l f\|_{\infty}
	+[D^3 f]_{Lip},
\end{align*}
where \(\|D^l f\|_{\infty}= \sup\limits_{x\in \mathbb{R}^r}\|D^l f(x)\|\) (with the  Euclidean tensor norm) and \([D^3 f]_{Lip}=\sup\limits_{x\neq y}
\frac{\| D^3 f(x)-D^3 f(y)\|}{\|x-y\|}\).
Define the following two  classes of test functions:
\begin{align*}
	\mathcal{F}_{3,1}
	=\{ \varphi \in C_b^{3,1}(\mathbb{R}^k): \|\varphi\|_{3,1}\leq 1\}\q\ 
	\hbox{and} \q\ 
	\mathcal{G}_{3,1}
	=\{ \psi \in C_b^{3,1}(\mathbb{R}^{k\times d}): \|\psi\|_{ 3,1}\leq 1\},
\end{align*}
The classes \(\mathcal F_{3,1}\) and \(\mathcal G_{3,1}\) test the value
processes and diagonal martingale integrands, respectively.
We identify \(\mathbb{R}^{k\times d}\) with \(\mathbb{R}^{kd}\) and equip it with the Frobenius norm.
For any \(\mu,\mu' \in \mathcal{P}_2(\mathbb{R}^k)\), 
\(\nu, \nu' \in \mathcal{P}_2(\mathbb{R}^{k\times d})\),
the integral probability metrics are denoted by \(d_{3,1}^Y\) and \(d_{3,1}^Z\), respectively, and are defined by
\begin{align*}
	&d_{3,1}^Y (\mu,\mu')=\sup\limits_{\varphi \in \mathcal{F}_{3,1}} \Big|\int_{\mathbb{R}^k} \varphi (y)\mu(dy)
	-\int_{\mathbb{R}^k} \varphi(y) \mu'(dy) \Big|,\\
	&d_{3,1}^Z(\nu,\nu')
	=\sup\limits_{\psi \in \mathcal{G}_{3,1}}
	\Big| \int_{\mathbb{R}^{k\times d}} \psi(z) \nu(dz)
	-\int_{\mathbb{R}^{k\times d}} \psi(z) \nu'(dz)    \Big|.
\end{align*}
These metrics are defined on all probability measures on the respective
Euclidean spaces. They metrize weak convergence: convergence in either
metric gives convergence against \(C_c^\infty\), hence vague convergence;
as the limiting measure is a probability measure, this implies weak
convergence. Conversely, the test classes are bounded and uniformly
Lipschitz, so weak convergence implies convergence in these metrics.
With \(d_{BL}\) defined by the constraint
\(\|g\|_\infty+[g]_{Lip} \leq1\), we have
\(d_{3,1}\leq d_{BL}\leq\mathcal W_1\leq\mathcal W_2\) whenever the
corresponding moments are finite. Each test class has a countable dense
subset for uniform convergence on compact sets. The supremum defining
the metric may therefore be taken over that subset; in particular,
the time-integrated distances below are measurable.

To obtain a weak rate of order \(n^{-1}\), we shall control the empirical measure
error uniformly over the preceding test classes. We therefore introduce a regularity class required for the lifted semigroup argument.
  \autoref{mclass} below uses the spatial and measure regularity of the classes
  introduced in
Chassagneux, Szpruch, and Tse \cite[Definitions 2.16--2.17]{chassagen_22_aap}, uniformly in time, together with an explicit time-differentiability requirement.
 In this section, 
  we  write
\(\mathcal{D}^{w,\alpha}\) for mixed derivatives taken in the order
used by \cite{chassagen_22_aap}.
 This notation  distinguishes them from \(D^{w,\alpha}\)    in \autoref{sec4}, where the spatial derivatives in \(x\) are taken first.
For \(w, l\in \mathbb{N}_0\), \(\overline{c}=(c_1,\cdots,c_w)\in (\mathbb{N}_0)^w\), \(\alpha=(l,\overline{c})\),  
\(|\alpha|=l+\sum_{j=1}^{w}c_j\),  and the
mixed-derivative 
for a scalar-valued map \(u :\mathbb{R}^d\times  \mathcal{P}_2(\mathbb{R}^d)
\longmapsto \mathbb{R}\) is defined by
\begin{align*}
	\mathcal{D}^{w,\alpha}u(x,\mu,v_1,\cdots,v_w)
	\triangleq \triangledown_{v_1}^{c_1} \cdots \triangledown_{v_w}^{c_w}
	 \triangledown_x^l D_\mu^w u(x,\mu,v_1,\cdots,v_w).
\end{align*}
When \(w=0\), we use the convention
\begin{align*}
	\mathcal{D}^{0,l} u \triangleq \triangledown_x^l u \q\ 
	\hbox{and}\q\ 
	\mathcal{D}^{0,0}u=u.
\end{align*} 
Following
  the convention of Chassagneux, Szpruch, and Tse \cite[Definition 2.4]{chassagen_22_aap}, the zero multi-index is excluded from the derivative bounds below; \(\mathcal{D}^{0,0}u\) is included only in the joint-continuity requirement.

\begin{definition}\label{mclass}\sl 
For	\(r\in \mathbb{N}\), 
 a scalar-valued function
 \(u: [0,T] \times \mathbb{R}^d \times \mathcal{P}_2(\mathbb{R}^d)
 \longmapsto \mathbb{R}\) belongs to \(\mathcal{M}_r([0,T]
 	\times \mathbb{R}^d\times \mathcal{P}_2 (\mathbb{R}^d) )\) if the following  hold:
 	\begin{enumerate}[(1)]
\item For every \((x,\mu)\in \mathbb{R}^d\times \mathcal{P}_2(\mathbb{R}^d)\), the map
 \(t \longmapsto u(t,x,\mu)\) is continuously differentiable.
 \item  For every multi-index satisfying \(0< w+|\alpha |\leq r\), the derivative
  \(\mathcal{D}^{w,\alpha}u\) exists,  is uniformly bounded,  
  and is globally Lipschitz in \((x,\mu,v_1,\cdots,v_w)\), with constants independent of \(t\).
\item  For every multi-index satisfying \( 0\leq w+|\alpha |\leq r,\)
\((t,x,\mu,v_1,\cdots,v_w)\longmapsto
\mathcal{D}^{w,\alpha}u(t,x,\mu,v_1,\cdots,v_w)\) is 
jointly continuous.
 	\end{enumerate}
\end{definition}

An associated regularity norm for \(\mathcal{M}_r\) is defined by
\begin{align*} 
	&	\|u\|_{\mathcal{M}_r}
	\triangleq \sup\limits_{t\in [0,T]}|u(t,0,\delta_0)| 
	+	\max\limits_{  0<w+|\alpha|\leq r} \sup\limits_{t\in [0,T]}
	\Big\{ \| \mathcal{D}^{w,\alpha} u(t)\|_{\infty} +[\mathcal{D}^{w,\alpha} u(t)]_{Lip} \Big\},
\end{align*}
where
\begin{align*}
	[\mathcal{D}^{w,\alpha}u(t)]_{Lip}
	=\sup\limits_{(x,\mu,v_1,\cdots,v_w)
	\neq (x',\mu',v_1',\cdots,v_w')}\frac{\big| \mathcal{D}^{w,\alpha}u(t,x,\mu,v_1,\cdots,v_w)
		-\mathcal{D}^{w,\alpha}u(t,x',\mu',v_1',\cdots,v_w')\big| }{|x-x'|
		+\mathcal{W}_2(\mu,\mu')+\sum_{j=1}^{w}|v_j-v_j'|}.
\end{align*}
For vector- or matrix-valued maps, the \(\mathcal{M}_r\)-norm is defined as the maximum of the
corresponding componentwise norms.
The time-independent class \(\mathcal{M}_r(\mathbb{R}^d \times \mathcal{P}_2(\mathbb{R}^d))\) is defined analogously by omitting the time variable and condition (1) of \autoref{mclass}.
A functional defined only on \(  \mathcal{P}_2(\mathbb{R}^d)\) is identified with its trivial
extension \((x,\mu)\longmapsto u(\mu)\):
\begin{align*}
	u\in \mathcal{M}_r(  \mathcal{P}_2(\mathbb{R}^d)) \Longleftrightarrow
	\widetilde{u}(x,\mu) \triangleq u(\mu)\in \mathcal{M}_r(  \mathbb{R}^d \times \mathcal{P}_2(\mathbb{R}^d)).
\end{align*}
Similarly,
a functional defined only on \([0,t]\times \mathcal{P}_2(\mathbb{R}^d)\) is identified with  \((s,x,\mu)\longmapsto u(s,\mu)\):
\begin{align*}
	 u\in \mathcal{M}_r([0,t]\times \mathcal{P}_2(\mathbb{R}^d)) \Longleftrightarrow
	 \widetilde{u}(s,x,\mu) \triangleq u(s,\mu)\in \mathcal{M}_r([0,t]\times \mathbb{R}^d \times \mathcal{P}_2(\mathbb{R}^d)).
\end{align*}

We impose the   following additional assumptions for forward weak regularity.

\begin{assumption}\label{5forsde}\rm 
	\begin{enumerate}[(i)]
		\item 
		\(b, \sigma \in \mathcal{M}_3([0,T]\times \mathbb{R}^d \times \mathcal{P}_2(\mathbb{R}^d))\). 
		\item \(  \|\sigma\|_{\infty} \triangleq  \sup\limits_{(t,x,\mu)\in [0,T]\times \mathbb{R}^d \times \mathcal{P}_2(\mathbb{R}^d)} | \sigma(t,x,\mu)| <\infty. \)
	\end{enumerate}
\end{assumption}

\autoref{5forsde} concerns the regularity of the forward McKean--Vlasov flow. We next introduce the decoupling field through which the resulting forward weak estimate will be transferred to the backward components.
Following the notation of \autoref{sec4},  let
\begin{align*}
	\widetilde{Y}_t^i=U(t,X_t^i,\theta_t^n) \q\ 
	\hbox{and} \q\ 
	\widetilde{Z}_t^{i,j}=\mathbf{1}_{\{i=j\}}
	D_xU(t,X^i_t,\theta_t^n)\sigma(t,X_t^i,\theta_t^n)
	+\frac{1}{n}  D_{\mu}U(t,X_t^i,\theta_t^n)(X^j_t)\sigma(t,X_t^j,\theta_t^n),
\end{align*}
where 
\(U\) is a decoupling field satisfying  
\begin{equation}\label{master2}
	\begin{aligned}
		&\  \partial_tU(t,x,\mu) +  D_x U(t,x,\mu) b(t,x,\mu) +
		\frac{1}{2}
		\operatorname{Tr} \Big(\sigma(t,x,\mu) \sigma^{\top}(t,x,\mu) D^2_x U(t,x,\mu) \Big)  \\
		&	+\int_{\mathbb{R}^d} D_{\mu} U(t,x,\mu)(\nu)
		b(t,\nu,\mu)\mu (d\nu)
		+\frac{1}{2}\int_{\mathbb{R}^d } \operatorname{Tr}\Big(\sigma(t,\nu,\mu) \sigma^{\top}  (t,\nu,\mu) D_{\nu} D_{\mu}
		U(t,x,\mu)(\nu) \Big) \mu (d\nu) \\
		& 	+f\Bigg(t,x,\mu,U(t,x,\mu),m^{U}(t,\mu),
		D_x U(t,x,\mu)\sigma(t,x,\mu),m^{\mathcal{Z}}(t,\mu)\Bigg)=0,\\
		&   U(T,x,\mu)=h(x,\mu),
	\end{aligned}
\end{equation}
where 
\begin{align*}
	m^{U}(t,\mu)=\Big(U(t,\cdot,\mu) \Big)_{\# \mu}
	\q\ \hbox{and} \q\ 
	m^{\mathcal{Z}}(t,\mu)=\Big(D_x U(t,\cdot,\mu) \sigma(t,\cdot,\mu)\Big)_{\# \mu}.
\end{align*}

We next impose the regularity assumptions on the classical decoupling field needed for the Markov representation and the intermediate-particle estimates.

\begin{assumption}\label{asp5.2}\rm 
	The master equation \eqref{master2} admits a classical solution \(U: [0,T]\times \mathbb{R}^d \times \mathcal{P}_2(\mathbb{R}^d)
	\longmapsto \mathbb{R}^k\) with the 
	following  additional mixed regularity:
	\begin{enumerate}[(i)]
		\item The function \(U\) and the derivatives
		\(    
		\partial_tU, D_xU,  D_x^2U, D_\mu U,
		D_\nu D_\mu U,  D_xD_\mu U,  D_\mu D_x U, D_\mu^2U\) exist and are jointly continuous in their respective variables, and 
		\(D_xD_\mu U= D_\mu D_x U \);
		\item 
		There exists a constant \(L_U>0\) such that,
		for all \((t,x,\mu,\nu,\nu')\in 
		[0,T]\times \mathbb{R}^d \times \mathcal{P}_2(\mathbb{R}^d)
		\times \mathbb{R}^d\times \mathbb{R}^d,
		\)
		\begin{align*}
			&\ |D_x U(t,x,\mu)|
			+|D_{x}^2 U(t,x,\mu)|
			+|D_\mu U(t,x,\mu)(\nu)|
			+|D_\nu D_\mu U(t,x,\mu)(\nu)|\\
			& +|D_x D_\mu U(t,x,\mu)(\nu)|+|D_\mu D_x U(t,x,\mu)(\nu)|
			+|D_{\mu}^2 U(t,x,\mu)(\nu,\nu')|\leq L_U;
		\end{align*}
		\item  The functions
		\(U\) and \( \partial_t U \)  have at most  linear growth:
		\begin{align*}
			|U(t,x,\mu)|+|\partial_t U(t,x,\mu)|
			\leq 
			L_U \Bigg( 1+|x|+\Big( \int_{\mathbb{R}^d} |\nu|^2 \mu (d\nu) \Big)^{\frac{1}{2}} \Bigg);
		\end{align*}
		\item  For every \((x,\mu)\in \mathbb{R}^d\times \mathcal{P}_2(\mathbb{R}^d)\),
		\(U(T,x,\mu)=h(x,\mu)\).
	\end{enumerate}
	All derivatives of the vector-valued function \(U\), as well as
	all trace operations involving these derivatives, are understood componentwise.
\end{assumption}

Set \(  H_Z(t,x,\mu)\triangleq D_x U(t,x,\mu)\sigma(t,x,\mu) \in \mathbb{R}^{k\times d}
\).
For every \(t\in [0,T]\), \(\varphi\in \mathcal{F}_{3,1}\) and 
\(\psi \in \mathcal{G}_{3,1}\), 
the maps \(\Phi_{t,\varphi}^Y, \Phi_{t,\psi}^Z :
\mathcal{P}_2(\mathbb{R}^d) \longmapsto \mathbb{R}\) are defined by
\begin{align*}
	&\Phi_{t,\varphi}^Y (\mu) 
	=\int_{\mathbb{R}^d} \varphi\big( U(t,x,\mu)\big)\mu (dx)\q\ \hbox{and}
	\q\ \Phi_{t,\psi}^Z (\mu) 
	=\int_{\mathbb{R}^{ d}} 
	\psi\big( H_Z(t,x,\mu) \big)\mu (dx).
\end{align*}
The functional \(\Phi_{t,\psi}^Z\) corresponds to the leading Markov term
\(H_Z\).  The additional \(\frac{1}{n}D_\mu U \sigma\) term in \(\widetilde{Z}_t^{i,i}\) will be estimated separately.

On an auxiliary probability space rich enough to realize any 
\(\mu\in\mathcal P_2(\mathbb R^d)\), let 
\(\xi\in L^2(\mathscr F_s;\mathbb R^d)\) have law \(\mu\) and 
be independent of the Brownian increments \((W_r-W_s)_{r\geq s}\).
Let \(X^{s,\xi}\) be a solution of  the following SDE:
\begin{align}\label{flowsde}
X_r^{s,\xi}
=\xi+\int_s^r b(q,X_q^{s,\xi},\mathbb{P}_{X_q^{s,\xi}}) dq
+\int_s^r \sigma(q,X_q^{s,\xi},\mathbb{P}_{X_q^{s,\xi}}) dW_q,
\q\ 0\leq s\leq r\leq t\leq T.
\end{align} 
 Define \(\mathbf{P}_{s,t}\mu = \mathbb{P}_{X_t^{s,\xi}}\) and
let \( V_{t,\varphi}^Y, V_{t,\psi}^Z : [0,t]\times \mathcal{P}_2(\mathbb{R}^d) \longmapsto \mathbb{R}
\),
\begin{align*}
	V_{t,\varphi}^Y(s,\mu)\triangleq \Phi_{t,\varphi}^Y(\mathbf{P}_{s,t}\mu)\q\ 
	\hbox{and} \q\ 
	V_{t,\psi}^Z(s,\mu)\triangleq \Phi_{t,\psi}^Z(\mathbf{P}_{s,t}\mu)
	\q\ 0\leq 
	s\leq t.
\end{align*} 
By uniqueness in law, 
\(\mathbf{P}_{s,t}\mu\) is well defined, independent of the
choice of \(\xi\), and 
satisfies \(\mathbf{P}_{r,t}\circ \mathbf{P}_{s,r} =\mathbf{P}_{s,t} .\)
The nonlinear flow property    yields  \(\mathbf{P}_{s,t}\theta_s=\theta_t\), hence we have
\begin{align*}
	V^Y_{t,\varphi}(s,\theta_s)
	= \Phi_{t,\varphi}^Y (\theta_t)\q\ 
	\hbox{and} \q\ 
	V^Z_{t,\psi}(s,\theta_s)
	=\Phi_{t,\psi}^Z (\theta_t),\q\ 
	0\leq s\leq t.
\end{align*}

\begin{remark}\label{z=hz}\sl 
Under  the standing assumptions of this section and  \autoref{asp5.2},
	 the Lions-It\^o formula applied to \(U(t,X_t,\theta_t)\),
	 together with the master equation \eqref{master2}, shows that
	 \( \big( U(t,X_t,\theta_t), H_Z(t,X_t,\theta_t)\big)
\)
	 satisfies the  limiting  mean-field BSDE \eqref{y1n}.
	Moreover,
	\begin{align*}
 m^{U}(t,\theta_t)=\mathbb{P}_{U(t,X_t,\theta_t)} \q\ 
 \hbox{and} \q\ 
 m^{\mathcal{Z}}(t,\theta_t)
 =\mathbb{P}_{H_Z(t,X_t,\theta_t)}.
	\end{align*}
Uniqueness of the limiting mean-field BSDE yields that
\begin{align*}
 Y_t=U(t,X_t,\theta_t)  ~ \hbox{indistinguishably,} 
 \q\ 
 Z^*_t\triangleq H_Z(t,X_t,\theta_t),\q 
 Z^*=Z,\  (dt\otimes d\mathbb{P})\hbox{-a.s.}
\end{align*}
Whenever a fixed-time law of the  limiting martingale integrand is considered below, \(\mathbb{P}_{Z_t}\) always denotes the law of the 
canonical Markov version \(Z_t^*\). Likewise, the intermediate-particle
process \(\widetilde{Z}^{i,j}\) is understood through the pointwise-defined Markov version.
For the genuine particles, fix predictable representatives of
\(Z^{i,j}\). Their fixed-time laws are used only for Lebesgue-almost every
time, and all time-integrated quantities are independent of this choice:
two representatives agree almost surely at almost every time by Fubini's
theorem. The general results below assert no fixed-time estimate for
the genuine particle integrands.
\end{remark}

The preceding representation reduces fixed-time weak errors of the intermediate particles to errors of functionals of the forward empirical measure. The next lemma provides the required regularity of these functionals after propagation by the nonlinear McKean--Vlasov flow,
which is a time-inhomogeneous extension of 
Chassagneux, Szpruch, and Tse
\cite[Theorem 2.18]{chassagen_22_aap}.

\begin{lemma}\label{timeinhomogeneous}\sl 
 Assume that  \(b, \sigma \in \mathcal{M}_3([0,T]\times 
 \mathbb{R}^d \times \mathcal{P}_2(\mathbb{R}^d))\).
 Let \( V_t^\Phi :[0,t]\times \mathcal{P}_2(\mathbb{R}^d)\longmapsto \mathbb{R} \) be
 defined by 
 \begin{align*}
  V_t^\Phi(s,\mu)=\Phi (\mathbf{P}_{s,t}\mu),
 \end{align*}
where \( \Phi: \mathcal{P}_2(\mathbb{R}^d)\longmapsto \mathbb{R}\) belongs to \(   \mathcal{M}_3 (\mathcal{P}_2(\mathbb{R}^d))\).
 Then  there exists a constant \(C>0\) such that
  \begin{align*}
 	\sup\limits_{t\in [0,T]}
 	\| V_t^\Phi \|_{\mathcal{M}_3([0,t]\times \mathcal{P}_2(\mathbb{R}^d))}
 	\leq C\|\Phi\|_{\mathcal{M}_3(\mathcal{P}_2(\mathbb{R}^d))},
 \end{align*}
 where \(C\) depends only on \(T, d, \|b\|_{\mathcal{M}_3}, \|\sigma\|_{\mathcal{M}_3}\), and \(V_t^\Phi\) satisfies
   the following PDE:
 \begin{align*}
 	&\	\partial_s V_{t}^\Phi (s,\mu)
 	+ \int_{\mathbb{R}^d} \langle D_\mu V_{t }^\Phi (s,\mu)(v), b(s,v,\mu)\rangle \mu (dv)\\
 	&	+\frac{1}{2} \int_{\mathbb{R}^d} \operatorname{Tr} \big(
 	\sigma(s,v,\mu) \sigma(s,v,\mu)^\top D_v D_\mu  V_{t }^\Phi (s,\mu) (v) \big)\mu (dv)=0, \q\ 0\leq s\leq t, \\
 	& V_{t}^\Phi (t,\mu)
 	=\Phi (\mu).
 \end{align*}
 Time derivatives at the endpoints are understood one-sided; for \(t=0\)
 only the identity \(V_0^\Phi(0,\mu)=\Phi(\mu)\) is asserted.
\end{lemma}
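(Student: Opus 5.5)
The plan is to reduce \autoref{timeinhomogeneous} to the time-homogeneous regularity theorem of Chassagneux, Szpruch and Tse \cite[Theorem 2.18]{chassagen_22_aap}, which we redo along the time-inhomogeneous flow. We first fix \(t\) and represent \(V_t^\Phi(s,\mu)=\Phi(\mathbb P_{X_t^{s,\xi}})\), with \(\xi\sim\mu\) independent of the Brownian increments after \(s\).

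\emph{Step 1: derivatives of the flow.} Following the construction in \cite{chassagen_22_aap}, we differentiate the flow \eqref{flowsde} in the lifted initial condition \(\xi\) and, for the Lions derivatives, along independent copies \((\tilde\xi,\tilde W)\). This produces the first derivative process \(\partial_v X_r^{s,v,\mu}\), the measure derivative \(D_\mu X_r^{s,x,\mu}(v)\), and the higher mixed derivatives up to order three. Each solves a linear SDE whose coefficients are derivatives of \(b,\sigma\) of order at most three, evaluated along the flow, plus lower-order source terms. Because \(b,\sigma\in\mathcal M_3\) have bounds that are uniform in time, the proofs of \cite[Lemmas 2.x]{chassagen_22_aap} go through verbatim with \(q\)-dependent coefficients. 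Burkholder--Davis--Gundy and Gronwall then give \(L^p\) bounds and Lipschitz bounds in \((x,\mu,v)\), with constants depending only on \(T,d,\|b\|_{\mathcal M_3},\|\sigma\|_{\mathcal M_3}\) and uniform in \(0\leq s\leq t\leq T\).

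\emph{Step 2: chain rule.} We compose with \(\Phi\in\mathcal M_3(\mathcal P_2)\). The chain rule for Lions derivatives expresses \(D_\mu^wV_t^\Phi\) and its spatial derivatives as expectations of products of derivatives of \(\Phi\), evaluated at \(\mathbf P_{s,t}\mu\), with the flow derivatives from Step 1. Boundedness follows from Hölder's inequality. The Lipschitz estimate in \((\mu,v)\) uses the Lipschitz property of \(\Phi\)'s derivatives together with \(\mathcal W_2(\mathbf P_{s,t}\mu,\mathbf P_{s,t}\mu')\le C\mathcal W_2(\mu,\mu')\). This yields conditions (2)--(3) of \autoref{mclass}, with the norm bound proportional to \(\|\Phi\|_{\mathcal M_3}\); the \(\sup_s|V(s,\delta_0)|\) term is bounded similarly. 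Joint continuity in \(s\) requires an extra argument: we compare flows started at different times \(s,s'\), using \(L^2\)-continuity of the solution in its initial time, and then apply dominated convergence.

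\emph{Step 3: the backward PDE and time differentiability.} For \(s\le r\le t\), the flow property gives \(V_t^\Phi(s,\mu)=V_t^\Phi(r,\mathbf P_{s,r}\mu)\). Applying the Lions--Itô formula (valid since \(V_t^\Phi(r,\cdot)\) has bounded \(D_\mu\), \(D_vD_\mu\)) to \(V_t^\Phi(r,\mathbb P_{X_{r+h}^{s,\xi}})\) along the deterministic law flow on \([r,r+h]\) gives
\begin{align*}
V_t^\Phi(r,\mu)-V_t^\Phi(r+h,\mu)=\int_r^{r+h}\!\int_{\mathbb{R}^d}\Big(\langle D_\mu V_t^\Phi(r+h,\mathbf P_{r,q}\mu)(v),b(q,v,\mathbf P_{r,q}\mu)\rangle+\tfrac12\operatorname{Tr}(\sigma\sigma^\top D_vD_\mu V_t^\Phi(r+h,\cdot)(v))\Big)(\mathbf P_{r,q}\mu)(dv)\,dq.
\end{align*}
Dividing by \(h\) and using the joint continuity in \(s\) from Step 2 together with continuity of \(q\mapsto\mathbf P_{r,q}\mu\), we get a continuous one-sided derivative. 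This yields the stated PDE and condition (1), with one-sided derivatives at the endpoints; the terminal condition \(V_t^\Phi(t,\cdot)=\Phi\) is immediate.

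The main obstacle is Step 1 at third order: controlling \(D_\mu^2\) and \(D_vD_\mu\) of the flow in the time-inhomogeneous, possibly degenerate setting, with Lipschitz constants uniform in \(s,t\). I would handle this by checking that each argument in \cite{chassagen_22_aap} uses only pointwise-in-time bounds on the derivatives of \(b,\sigma\). The continuity in \(s\) in Step 2 is the second delicate point.
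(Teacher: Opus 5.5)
Your proposal follows essentially the same route as the paper's appendix proof. Both adapt the variational induction of Chassagneux--Szpruch--Tse to time-dependent coefficients to get the flow-derivative moment and Lipschitz bounds, then apply the Lions chain rule to $\Phi(\mathbf P_{s,t}\mu)$, and finally combine the flow identity $V_t^\Phi(s,\mu)=V_t^\Phi(s+h,\mathbf P_{s,s+h}\mu)$ with the chain rule along the deterministic law flow to obtain the backward equation. The only differences are cosmetic: the paper computes the left derivative separately on $[s-h,s]$ instead of invoking the standard fact that a continuous function with a continuous right derivative is $C^1$, and it states explicitly the $\mathcal W_2$-uniform integrability needed to pass to the limit $h\downarrow 0$ with quadratically growing integrands.
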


\begin{proof}
 The adaptation of the variation argument underlying \cite[Theorem 2.18]{chassagen_22_aap}
 to time-dependent coefficients is given
 in \autoref{proofoflemma}.
\end{proof}

\begin{assumption}\label{phi}\sl 
		For every \(t\in [0,T]\), \(\varphi\in \mathcal{F}_{3,1}\),
	\(\psi \in \mathcal{G}_{3,1} \),
the functionals	\( \Phi_{t,\varphi}^Y,  \Phi_{t,\psi}^Z \) belong to \( \mathcal{M}_3 (\mathcal{P}_2(\mathbb{R}^d))\), and   there
	   exists a constant \(L_{\mathcal{M}_3}>0\) such that 
\begin{align*}
	\sup\limits_{t\in [0,T]}\Big\{ 
	\sup\limits_{\varphi \in \mathcal{F}_{3,1}}\|\Phi_{t,\varphi}^Y\|_{\mathcal{M}_3(\mathcal{P}_2)}
	+\sup\limits_{\psi \in \mathcal{G}_{3,1}} 
	\| \Phi_{t,\psi}^Z \|_{\mathcal{M}_3 (\mathcal{P}_2)}
	\Big\}\leq L_{\mathcal{M}_3}.
\end{align*}
\end{assumption}

\begin{remark}\label{explainasp5.2} \sl 
	\autoref{phi}  is satisfied, for example,  when all mixed  spatial and Lions derivatives of
	\(U\) and \(H_Z\)  of positive total order  at most three are uniformly bounded and globally Lipschitz, uniformly in \(t\).
	For example, the first Lions derivative of the \(Y\)-composite is
\begin{align*}
 D_\mu\Phi_{t,\varphi}^Y(\mu)(v)
 =D\varphi(U(t,v,\mu))D_xU(t,v,\mu)
 +\int_{\mathbb R^d}D\varphi(U(t,x,\mu))D_\mu U(t,x,\mu)(v)\mu(dx).
\end{align*}
Repeated spatial and Lions differentiation gives the required bounds, uniformly over
\(\|\varphi\|_{3,1},\|\psi\|_{3,1}\leq1\); the same formula applies to \(H_Z\).
These are additional composition assumptions: the classical-solution regularity in
\autoref{asp5.2} alone does not supply them. In particular, differentiating
\(H_Z=D_xU\,\sigma\) can require one more spatial derivative of \(U\).
\end{remark}

\begin{lemma}\label{sharpsecodwithweak}\sl 
	Assume that \autoref{5forsde}, \autoref{asp5.2} and \autoref{phi} hold. Then for each \(i=1,\cdots,n\),
	there exists a constant \(C>0,\) independent of \(n, i, t, \varphi, \psi\), such that
	\begin{align}\label{sharpfor2}
	\sup\limits_{t\in [0,T]} \Big\{  d_{3,1}^Y(\mathbb{P}_{\widetilde{Y}_t^i},
	\mathbb{P}_{Y_t}) 
	+  d_{3,1}^Z(\mathbb{P}_{\widetilde{Z}_t^{i,i}},
	\mathbb{P}_{Z_t})\Big\}
	\leq \frac{C}{n},
	\end{align} 
	where \(C\) depends only on \(d, \|b\|_{\mathcal{M}_3}, \|\sigma\|_{\mathcal{M}_3}, \|\sigma\|_{\infty}, L_U, L_{\mathcal{M}_3}, \mathbb{E}\left[|X_0|^2\right]
	\) and \(T\).
\end{lemma}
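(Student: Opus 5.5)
The plan is to reduce both fixed-time weak errors to the weak error of smooth functionals of the forward empirical measure. Then we run the lifted semigroup argument of \autoref{timeinhomogeneous} along the particle flow.

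Fix $t$ and $\varphi\in\mathcal F_{3,1}$. By exchangeability of the forward particles,
\begin{align*}
\mathbb E[\varphi(\widetilde Y_t^i)]=\mathbb E\Big[\frac1n\sum_{j=1}^n\varphi(U(t,X_t^j,\theta_t^n))\Big]=\mathbb E[\Phi^Y_{t,\varphi}(\theta_t^n)].
\end{align*}
By \autoref{z=hz}, $\mathbb E[\varphi(Y_t)]=\Phi^Y_{t,\varphi}(\theta_t)=V^Y_{t,\varphi}(0,\theta_0)$. For $Z$, first replace $\widetilde Z_t^{i,i}$ by $H_Z(t,X_t^i,\theta_t^n)$. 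Since $|\frac1nD_\mu U\sigma|\le L_U\|\sigma\|_\infty/n$ and $\psi$ is $1$-Lipschitz, this costs at most $C/n$. The same exchangeability step then gives $\mathbb E[\Phi^Z_{t,\psi}(\theta_t^n)]$. It therefore suffices to bound
\begin{align*}
\big|\mathbb E[\Phi(\theta_t^n)]-\Phi(\theta_t)\big|\le\frac Cn,\qquad \Phi\in\{\Phi^Y_{t,\varphi},\Phi^Z_{t,\psi}\},
\end{align*}
uniformly in $t,\varphi,\psi$. The constant is then uniform because \autoref{phi} bounds the $\mathcal M_3$-norms by $L_{\mathcal M_3}$.

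For this, set $V=V_t^\Phi$ from \autoref{timeinhomogeneous}. Then $V(t,\cdot)=\Phi$, $V\in\mathcal M_3([0,t]\times\mathcal P_2)$ with norm at most $CL_{\mathcal M_3}$, and $V$ solves the backward Kolmogorov equation on measures. Write
\begin{align*}
\mathbb E[\Phi(\theta_t^n)]-\Phi(\theta_t)=\big(\mathbb E[V(0,\theta_0^n)]-V(0,\theta_0)\big)+\mathbb E[V(t,\theta_t^n)-V(0,\theta_0^n)].
\end{align*}
\textbf{Initial term.} The initial particles are i.i.d. A second-order expansion of $V(0,\cdot)$ around $\theta_0$, along $\lambda\mapsto\theta_0+\lambda(\theta_0^n-\theta_0)$ using linear functional derivatives, kills the first-order term in expectation. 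The remaining second-order term has a diagonal part of size $O(1/n)$ and an off-diagonal part of mean zero. The bounded $D^2_\mu V$, $D_vD_\mu V$ from the $\mathcal M_3$ bound together with $\mathbb E|X_0|^2<\infty$ give $C/n$; equivalently, cite the static estimate in \cite{chassagen_22_aap}.

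\textbf{Dynamic term.} Apply It\^o's formula to the empirical projection $v^n(s,\mathbf x)=V(s,\mu^n_{\mathbf x})$, exactly as in the computation \eqref{cal1}--\eqref{cal2} but with $\sigma(s,x^j,\mu^n_{\mathbf x})$ in place of $\Sigma$. The drift and first-order diffusion terms reproduce the generator in the PDE of \autoref{timeinhomogeneous} evaluated at $\theta_s^n$, and hence cancel. What remains is the remainder $\frac1{2n^2}\sum_j\operatorname{Tr}(\sigma\sigma^\top D^2_\mu V(s,\theta^n_s)(X^j_s,X^j_s))$, plus a martingale. Since $\|\sigma\|_\infty<\infty$ and $D_\mu^2V$ is bounded, the remainder is $O(1/n)$, and taking expectations yields $C t/n$.

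The main obstacle is justifying It\^o's formula for $v^n$ and the mixed derivative identification. Condition (3) of \autoref{mclass} and the bounds from \autoref{timeinhomogeneous} give $v^n\in C^{1,2}$ with bounded derivatives; one also needs the one-sided time derivative at $s=t$. The martingale is a true martingale because $D_\mu V$ and $\sigma$ are bounded. Finally, take suprema over $\varphi,\psi$ and over $t\in[0,T]$.
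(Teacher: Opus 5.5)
Apart from one step, your reduction is the paper's argument: exchangeability, the $\frac1nD_\mu U\sigma$ correction for $Z$, and It\^o's formula for $v^n(s,\mathbf x)=V(s,\mu^n_{\mathbf x})$. In that last step the Kolmogorov equation of Lemma~\ref{timeinhomogeneous} cancels everything except $\frac1{2n}\int\operatorname{Tr}(\sigma\sigma^\top D_\mu^2V(s,\theta_s^n)(x,x))\,\theta_s^n(dx)$ and a true martingale.

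The step that fails as written is the initial bias. Expand $F=V(0,\cdot)$ along $\eta_\lambda=\theta_0+\lambda(\theta_0^n-\theta_0)$. The second-order remainder is
\begin{align*}
\frac1{n^2}\sum_{j,k=1}^n\int_0^1(1-\lambda)\iint\frac{\partial^2F}{\partial m^2}(\eta_\lambda)(v,w)\,(\delta_{\xi_j}-\theta_0)(dv)\,(\delta_{\xi_k}-\theta_0)(dw)\,d\lambda .
\end{align*}
The off-diagonal terms $j\neq k$ are \emph{not} mean zero. The kernel is evaluated at the random measure $\eta_\lambda$, which depends on $\xi_j$ and $\xi_k$, so the independence and centering argument does not apply. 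There are $n(n-1)$ such terms, each of size $O(n^{-2})$, so your sketch only yields an $O(1)$ bound.

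Mean zero would require freezing the kernel at the deterministic $\theta_0$. Doing so and bounding the third-order remainder naively via $\mathbb E[\mathcal W_1(\theta_0^n,\theta_0)^3]$ gives rates worse than $n^{-1}$ in higher dimensions, and it needs more than a second moment. One could repair this using the third derivative available in $\mathcal M_3$, by replacing $\xi_j,\xi_k$ in $\eta_\lambda$ with independent copies and controlling the difference. That is an additional argument you have not supplied. So your ``equivalently'' is not accurate; it is the fallback citation of the static estimate in \cite{chassagen_22_aap} that actually carries this step.

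The paper avoids off-diagonal terms entirely by telescoping one sample at a time. Set $\nu_j=(1-j/n)\theta_0+n^{-1}\sum_{\ell\le j}\delta_{\xi_\ell}$. Then $\nu_0=\theta_0$, $\nu_n=\theta_0^n$, and $\nu_j-\nu_{j-1}=n^{-1}(\delta_{\xi_j}-\theta_0)$, with $\xi_j$ independent of $\nu_{j-1}$.

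In the expansion of $F(\nu_j)-F(\nu_{j-1})$ around $\nu_{j-1}$, the first-order term has zero conditional mean. The second-order remainder involves only the diagonal pair $(\xi_j,\xi_j)$. With $K=\sup\|D_\mu^2F\|$, it is bounded by $n^{-2}K\iint|\xi_j-v|\,|\xi_j-w|\,\theta_0(dv)\,\theta_0(dw)$. Summing the $n$ increments gives $|\mathbb E[F(\theta_0^n)]-F(\theta_0)|\le\frac Kn\,\mathbb E|X_0-\mathbb EX_0|^2$. This uses only the bounded second Lions derivative and the second moment of $X_0$.
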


\begin{proof}
By  exchangeability,
\begin{align*}
& \mathbb{E}\left[\Phi_{t,\varphi}^Y(\theta_t^n)\right]
 =\mathbb{E}\left[\varphi(\widetilde{Y}_t^i)\right],
 \q\ 
  \mathbb{E}\left[\Phi_{t,\psi}^Z(\theta_t^n)\right]
 =\mathbb{E}\left[\psi(H_Z(t,X_t^i,\theta_t^n))\right],\\
 & \mathbb{E}\left[\Phi_{t,\varphi}^Y(\theta_t)\right]
 =\mathbb{E}\left[\varphi( Y_t)\right],
 \q\ 
 \mathbb{E}\left[\Phi_{t,\psi}^Z(\theta_t)\right]
 =\mathbb{E}\left[\psi(H_Z(t,X_t,\theta_t))\right].
\end{align*}
Set \(v^n(s,x_1,\cdots,x_n)=V_{t,\varphi}^Y(s,\mu^n)\), where
\(\mu^n=\frac{1}{n}\sum_{i=1}^{n}\delta_{x_i}\). Using the
differential formula for empirical measure,
\begin{align*}
 D_{x_i}v^n(s,x_1,\cdots,x_n)&=\frac{1}{n}D_\mu V_{t,\varphi}^Y(s,\mu^n)(x_i),\\
D_{x_i}^2 v^n(s,x_1,\cdots,x_n)&=\frac{1}{n}D_v D_\mu V_{t,\varphi}^Y(s,\mu^n)(x_i)+\frac{1}{n^2}D_\mu^2 V_{t,\varphi}^Y(s,\mu^n)(x_i,x_i).
\end{align*}
Applying It\^o's formula to \(V_{t,\varphi}^Y(s,\theta_s^n)=v^n(s,X_s^1,\cdots,X_s^n)\),
\begin{equation}\label{itofor_v}
\begin{aligned}
 V_{t,\varphi}^Y(s,\theta_s^n)
 =&\ V_{t,\varphi}^Y(0,\theta_0^n)
 +\int_0^s \Big( \partial_r V_{t,\varphi}^Y(r,\theta_r^n)
 +\frac{1}{n}\sum_{i=1}^{n} D_\mu V_{t,\varphi}^Y(r,\theta_r^n)(X_r^i)b(r,X_r^i,\theta_r^n)\\
&+\frac{1}{2n}\sum_{i=1}^{n}
\operatorname{Tr}\big( \sigma(r,X_r^i,\theta_r^n) 
 \sigma(r,X_r^i,\theta_r^n)^\top D_v D_\mu V_{t,\varphi}^Y (r,\theta_r^n) (X_r^i)\big)\\
&+\frac{1}{2n^2} \sum_{i=1}^{n} \operatorname{Tr}
\big( \sigma(r,X_r^i,\theta_r^n) \sigma(r,X_r^i,\theta_r^n)^\top D_\mu^2V_{t,\varphi}^Y (r,\theta_r^n) (X_r^i,X_r^i)  \big) \Big) dr\\
& +\frac{1}{n}\sum_{i=1}^{n}\int_0^s 
D_\mu V_{t,\varphi}^Y(r,\theta_r^n) (X_r^i) \sigma(r,X_r^i,\theta_r^n) dW_r^i,\q\ 0\leq s\leq t.
\end{aligned}
\end{equation}
By \autoref{timeinhomogeneous}, \(V_{t,\varphi}^Y(s,\mu)\in \mathcal{M}_3\) and solves
\begin{align*}
&\	\partial_s V_{t,\varphi}^Y(s,\mu)
	+ \int_{\mathbb{R}^d} D_\mu V_{t,\varphi}^Y (s,\mu)(v) b(s,v,\mu)\mu (dv)\\
&	+\frac{1}{2} \int_{\mathbb{R}^d} \operatorname{Tr} \big(
	\sigma(s,v,\mu) \sigma(s,v,\mu)^\top D_v D_\mu  V_{t,\varphi}^Y (s,\mu) (v) \big)\mu (dv)=0.
\end{align*}
Thus \eqref{itofor_v} reduces to 
\begin{equation}\label{masterreduce}
\begin{aligned}
 V_{t,\varphi}^Y(t,\theta_t^n)
 =&\ V_{t,\varphi}^Y(0,\theta_0^n)
 +\frac{1}{2n}\int_0^t\int_{\mathbb{R}^d} \operatorname{Tr}
 \big( \sigma(s,x,\theta_s^n) \sigma(s,x,\theta_s^n)^\top
 D_\mu^2 V_{t,\varphi}^Y(s,\theta_s^n)(x,x) \big)
 \theta_s^n (dx) ds\\
& +\frac{1}{n}\sum_{i=1}^{n}\int_0^t 
 D_\mu V_{t,\varphi}^Y(s,\theta_s^n) (X_s^i) \sigma(s,X_s^i,\theta_s^n) dW_s^i,\q\ 0\leq t\leq T.
\end{aligned}
\end{equation}
 Since \(D_\mu V^Y_{t,\varphi}\) and \(\sigma\) are bounded,
 \begin{align*}
 	 \mathbb{E}\left[
 	 \frac{1}{n^2}\sum_{i=1}^{n}\int_0^t  \big|
 	 D_\mu V_{t,\varphi}^Y(s,\theta_s^n) (X_s^i) \sigma(s,X_s^i,\theta_s^n) \big|^2 ds
 	 \right]\leq \frac{C}{n}.
 \end{align*}
 Hence  the stochastic integral in \eqref{masterreduce}  is a square-integrable martingale.  Taking expectations yields
\begin{align*}
 \mathbb{E}\left[
 \varphi(\widetilde{Y}_t^i)-\varphi(Y_t)\right]
 =&\ \mathbb{E}\left[ V^Y_{t,\varphi}(0,\theta_0^n)-V^Y_{t,\varphi}(0,\theta_0)\right]\\
 &+\frac{1}{2n}\int_0^t\mathbb{E}\int_{\mathbb{R}^d}
 \operatorname{Tr}\big( \sigma(s,x,\theta_s^n)\sigma(s,x,\theta_s^n)^\top
 D_\mu^2V^Y_{t,\varphi}(s,\theta_s^n)(x,x)\big)\theta_s^n (dx) ds.
\end{align*}
The initial empirical bias is of order \(n^{-1}\) under the standing second-moment assumption, as in 
\cite[Theorem 2.14 (i)]{chassagen_22_aap}.
 We give a direct proof to make its dependence on the second moment explicit.
Let \(F=V_{t,\varphi}^Y(0,\cdot)\), \(\mu=\theta_0\), and let
\(\xi_1,\ldots,\xi_n\) be independent with law \(\mu\).
By \cite[Theorems 2.5--2.6 and Lemma 2.7]{chassagen_22_aap}, \(F\) has first and second linear functional derivatives with integrable growth. Using the notation of \autoref{def2,1}--\autoref{def2.2}, we have
\begin{align*}
 K\triangleq \sup_{\eta,v,w}\left\|D_vD_w\frac{\partial^2F}{\partial m^2}(\eta)(v,w)\right\|_{op}
 =\sup_{\eta,v,w}\|D_\mu^2F(\eta)(v,w)\|_{op}<\infty.
\end{align*}
The bound on \(K\) is uniform in \(t,\varphi\) by \autoref{timeinhomogeneous}.
Set \(\nu_j=(1-j/n)\mu+n^{-1}\sum_{\ell=1}^j\delta_{\xi_\ell}\).
In the Taylor expansion of \(F(\nu_j)-F(\nu_{j-1})\), the first-order term
has expectation zero, since \(\xi_j\) is independent of \(\nu_{j-1}\).
Writing \(K_\eta=\frac{\partial^2 F}{\partial m^2}(\eta)
 \), the double signed integral in
the remainder satisfies, uniformly over every measure on the Taylor segment,
\begin{align*}
 \left|\iint_{\mathbb{R}^d \times \mathbb{R}^d} K_\eta(v,w)(\delta_{\xi_j}-\mu)(dv)(\delta_{\xi_j}-\mu)(dw)\right|
 \leq K\iint_{\mathbb{R}^d \times \mathbb{R}^d}
 |\xi_j-v|\,|\xi_j-w|\mu(dv)\mu(dw).
\end{align*}
This follows by subtracting the two single-variable terms and using the
mixed derivative bound. Its expectation is at most
\(2K\mathbb E\left[ 
|\xi_1-\mathbb E\xi_1|^2 \right]\).
The integral Taylor remainder has weight \(\int_0^1(1-r)dr=1/2\).
Summing the \(n\) increments, each with factor \(n^{-2}\), gives
\begin{align*}
 \big|\mathbb E\left[ F(\theta_0^n)-F(\theta_0)\right]\big|
 \leq\frac K n\mathbb E\left[ |X_0-\mathbb EX_0|^2\right].
\end{align*}
The Taylor expansions and conditional centering are integrable: the first
derivative may be normalized to have linear growth in its auxiliary variable,
and the second derivative has at most quadratic growth by
\cite[Lemma 2.7]{chassagen_22_aap}. Consequently,
\begin{align*}
 \sup_{t\in[0,T]}\sup_{\varphi\in\mathcal F_{3,1}}
 \left|\mathbb E\left[V_{t,\varphi}^Y(0,\theta_0^n)-V_{t,\varphi}^Y(0,\theta_0)\right]\right|
 \leq\frac Cn.
\end{align*}
Moreover, by 
the boundedness of \(\sigma \) and \(D_\mu^2 V\),  we have
\begin{align*}
&\	\frac{1}{2n}\Big| \int_0^t\mathbb{E}\int_{\mathbb{R}^d}
	\operatorname{Tr}\big( \sigma(s,x,\theta_s^n)\sigma(s,x,\theta_s^n)^\top
	D_\mu^2V^Y_{t,\varphi}(s,\theta_s^n)(x,x)\big)\theta_s^n (dx) ds\Big|\\
&	\leq  \frac{C_d T}{2n}\|\sigma\|_{\infty}^2
\sup\limits_{(s,\mu,x)} |D_\mu^2 V_{t,\varphi}^Y (s,\mu) (x,x)|\\
&\leq 
 \frac{C}{n}.
\end{align*}
Consequently,  we deduce that
\begin{align}\label{5second}
	 \sup\limits_{t\in [0,T]}\sup\limits_{\varphi \in \mathcal{F}_{3,1}}\Big|\mathbb{E}\left[
	 \varphi(\widetilde{Y}_t^i)\right]-
	\mathbb{E}\left[\varphi(Y_t)  
	 \right] \Big| \leq 
	 \frac{C}{n}.
\end{align}
Applying \autoref{timeinhomogeneous} and the preceding argument with 
\((\Phi_{t,\varphi}^Y, V_{t,\varphi}^Y)\) replaced by 
\( (\Phi_{t,\psi}^Z, V_{t,\psi}^Z) \), we obtain
\begin{align}\label{5oversingleforz}
	\sup\limits_{t\in [0,T]}
	\sup\limits_{\psi \in \mathcal{G}_{3,1}}
	\Big|   \mathbb{E}\left[
	\psi(H_Z(t,X_t^i,\theta_t^n))\right]-
	\mathbb{E}\left[\psi(H_Z(t,X_t,\theta_t)  )
	\right] 
	\Big|\leq \frac{C}{n}.
\end{align}
Moreover, 
\begin{align*}
 \widetilde{Z}_t^{i,i}
 =H_Z(t,X_t^i,\theta_t^n)
 +\frac{1}{n}D_\mu U(t,X_t^i,\theta_t^n)(X_t^i)\sigma
 (t,X_t^i,\theta_t^n).
\end{align*}
The boundedness of \(D_\mu U\) and \(\sigma\) yields that
\begin{align}\label{5oversingeforzs2}
		\sup\limits_{t\in [0,T]}
	\sup\limits_{\psi \in \mathcal{G}_{3,1}}
	\Big|   \mathbb{E}\left[
	\psi(\widetilde{Z}_t^{i,i})\right]-
	\mathbb{E}\left[\psi(H_Z(t,X^i_t,\theta^n_t)  )
	\right] \Big| 
	\leq \frac{L_U \|\sigma\|_{\infty} }{n}.
\end{align}
Thus the desired inequality \eqref{sharpfor2}
follows from \eqref{5second}, \eqref{5oversingleforz} and
\eqref{5oversingeforzs2}.
\end{proof}

The preceding proof is based on the \(\mathcal{M}_3\)-regularity framework  of Chassagneux, Szpruch, and Tse \cite{chassagen_22_aap}.
We next provide an alternative, explicit set of sufficient conditions,
formulated in terms of the classes introduced in \autoref{mclass},
(see \autoref{fandsong1}
and \autoref{fandsong2} below) under which the smooth weak-error  estimate of Frikha and Song \cite{frikhaandsong_26} can be applied.

\begin{assumption}\label{fandsong1}\rm 
	Let \begin{align*}
		A: [0,T]\times \mathbb{R}^d \times \mathcal{P}_2(\mathbb{R}^d)\longmapsto
		\mathbb{S}_d^+,\q\ 
 A(t,x,\mu)\triangleq \sigma(t,x,\mu) \sigma^\top(t,x,\mu),
	\end{align*} 
	where \(\mathbb{S}_d^+\) denotes the space of symmetric nonnegative definite \(d\times d\) matrices. The following hold:
\begin{enumerate}[(i)]
 \item  \(b, \sigma, A\) belong to \(\mathcal{M}_2([0,T]\times \mathbb{R}^d \times \mathcal{P}_2(\mathbb{R}^d))\).
 \item 
 \(\partial_t b\) and \(\partial_t A\) are jointly continuous in \((t,x,\mu)\) and satisfy
 \begin{align*}
\sup\limits_{(t,x,\mu)\in [0,T]\times \mathbb{R}^d \times \mathcal{P}_2(\mathbb{R}^d)}
\Big(
|\partial_t b(t,x,\mu)| +
|\partial_t A(t,x,\mu)|
 \Big) <\infty.
 \end{align*}
 \item 
The mixed derivatives \(D_\mu D_x b\) and \(D_\mu D_x \sigma\) exist and are jointly continuous. Moreover,
\begin{align*}
\sup\limits_{t\in[0,T]}\Big( \|D_\mu D_x b(t)\|_{\infty} +[D_\mu D_x b(t)]_{Lip}
+\|D_\mu D_x \sigma(t)\|_{\infty} +[D_\mu D_x \sigma(t)]_{Lip} \Big)<
\infty.
\end{align*}
\end{enumerate}
\end{assumption}

\begin{assumption}\label{fandsong2}\rm 
	For every \(t\in [0,T]\), \(\varphi\in \mathcal{F}_{3,1}\),
	\(\psi \in \mathcal{G}_{3,1} \),
the functionals	\( \Phi_{t,\varphi}^Y,  \Phi_{t,\psi}^Z\) belong to \( \mathcal{M}_2 (\mathcal{P}_2(\mathbb{R}^d))\),  and   there
exists a constant \(L_{\mathcal{M}_2}>0\) such that 
\begin{align*}
	\sup\limits_{t\in [0,T]}\Big\{ 
	\sup\limits_{\varphi \in \mathcal{F}_{3,1}}\|\Phi_{t,\varphi}^Y\|_{\mathcal{M}_2(\mathcal{P}_2)}
	+\sup\limits_{\psi \in \mathcal{G}_{3,1}} 
	\| \Phi_{t,\psi}^Z \|_{\mathcal{M}_2 (\mathcal{P}_2)}
	\Big\}\leq L_{\mathcal{M}_2}.
\end{align*}
\end{assumption}

	In the alternative framework, \autoref{fandsong1} and \autoref{fandsong2}
replace \autoref{5forsde} and \autoref{phi},
respectively, while \autoref{asp5.2} and the standing assumptions of this section remain in force (see \autoref{frkandsong_cri} below).

\begin{corollary}\label{frkandsong_cri}\sl 
	Under the standing assumptions of this section, 
	suppose additionally that
  \autoref{asp5.2}, \autoref{fandsong1} and \autoref{fandsong2} hold. Then for each \(i=1,\cdots,n\), 
 there exists a constant \(C>0\), independent of \(n, i, t\), such that
 	\begin{align}\label{coralymediate} 
 	\sup\limits_{t\in [0,T]} \Big\{  d_{3,1}^Y(\mathbb{P}_{\widetilde{Y}_t^i},
 	\mathbb{P}_{Y_t}) 
 	+  d_{3,1}^Z(\mathbb{P}_{\widetilde{Z}_t^{i,i}},
 	\mathbb{P}_{Z_t})\Big\}
 	\leq \frac{C}{n}.
 \end{align} 
\end{corollary}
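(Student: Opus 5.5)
The proof will mirror that of \autoref{sharpsecodwithweak}. The only change is the source of the forward weak-error input: we use the smooth-regime estimate of Frikha and Song \cite{frikhaandsong_26}, passed to continuous time, instead of \autoref{timeinhomogeneous} combined with the $\mathcal M_3$-Itô argument.

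\textbf{Step 1 (exchangeability reduction).} As in the proof of \autoref{sharpsecodwithweak}, exchangeability gives
\begin{align*}
\mathbb E[\varphi(\widetilde Y_t^i)]&=\mathbb E[\Phi^Y_{t,\varphi}(\theta_t^n)],\qquad \mathbb E[\varphi(Y_t)]=\Phi^Y_{t,\varphi}(\theta_t),\\
\mathbb E[\psi(H_Z(t,X_t^i,\theta_t^n))]&=\mathbb E[\Phi^Z_{t,\psi}(\theta_t^n)],
\end{align*}
where $Z_t$ is the canonical Markov version from \autoref{z=hz}. Both fixed-time errors therefore become differences
\begin{align*}
\mathbb E[\Phi(\theta_t^n)]-\Phi(\theta_t),\qquad \Phi\in\{\Phi^Y_{t,\varphi},\Phi^Z_{t,\psi}\}.
\end{align*}
By \autoref{fandsong2}, every such $\Phi$ lies in $\mathcal M_2(\mathcal P_2)$, with norm at most $L_{\mathcal M_2}$ uniformly in $t$, $\varphi$ and $\psi$.

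\textbf{Step 2 (forward weak estimate).} Next we check that \autoref{fandsong1} reproduces the hypotheses of the smooth-regime theorem of \cite{frikhaandsong_26}. These are $\mathcal M_2$-regularity of $b$, $\sigma$ and $a=\sigma\sigma^\top$, bounded time derivatives of $b$ and $a$, and Lipschitz mixed derivatives $D_\mu D_x b$ and $D_\mu D_x\sigma$; no ellipticity is required. The test functional class is supplied by \autoref{fandsong2}, and the initial law has a second moment. Their result bounds the error of the Euler particle scheme with step $h$ by
\begin{align*}
\big|\mathbb E[\Phi(\theta^{n,h}_t)]-\Phi(\theta_t)\big|\leq C(n^{-1}+h),
\end{align*}
with $C$ depending only on the regularity norms. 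For fixed $n$, the Euler particle system converges to the continuous particle system in $L^2$ as $h\to0$, because the coefficients are Lipschitz. Since $\Phi$ is $\mathcal W_2$-Lipschitz and $\mathcal W_2(\theta^{n,h}_t,\theta^n_t)^2\leq n^{-1}\sum_j|X^{j,h}_t-X^j_t|^2$, letting $h\to0$ gives
\begin{align*}
\big|\mathbb E[\Phi(\theta^n_t)]-\Phi(\theta_t)\big|\leq\frac{C}{n}.
\end{align*}
This bound holds uniformly in $t$ and in the test functions.

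\textbf{Step 3 (diagonal correction and conclusion).} The diagonal correction is handled exactly as in \eqref{5oversingeforzs2}. We have $|\widetilde Z^{i,i}_t-H_Z(t,X^i_t,\theta^n_t)|\leq L_U\|\sigma\|_\infty/n$, and each $\psi$ is $1$-Lipschitz, which costs an additional $O(n^{-1})$. Boundedness of $\sigma$ follows from \autoref{fandsong1}(i) via $\mathcal M_2$-boundedness of $A$; if this is insufficient, one uses the standing linear-growth version together with moment bounds instead. Taking suprema over $\varphi$, $\psi$ and $t$ then yields \eqref{coralymediate}.

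\textbf{Main obstacle.} The delicate step is Step 2. Their statement is formulated for discretized schemes and possibly time-homogeneous or specific functional classes, so we must verify that its constants depend only on the listed norms. This uniformity is what makes the supremum over the test classes legitimate. We must also justify the limit $h\to0$ with $n$ fixed.
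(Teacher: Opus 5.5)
Your proposal follows the paper's proof essentially step for step. It has the same exchangeability reduction to $\mathbb E[\Phi(\theta_t^n)]-\Phi(\theta_t)$, the same Frikha--Song Euler bound $C(n^{-1}+h)$ with constants uniform over the $\mathcal M_2$-bounded family $\{\Phi^Y_{t,\varphi},\Phi^Z_{t,\psi}\}$, the same limit $h\to0$ at fixed $n$ via $\mathcal W_2$-Lipschitz continuity, and the same diagonal correction.

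The one point to fix is in Step 3: $\sigma$ is \emph{not} bounded under \autoref{fandsong1}. The class $\mathcal M_r$ of \autoref{mclass} and its norm control only derivatives of positive order, together with $\sup_t|u(t,0,\delta_0)|$. So $\sigma,A\in\mathcal M_2$ give only growth bounds. For instance, $d=1$, $b=0$, $\sigma(x)=(1+x^2)^{1/4}$ satisfies \autoref{fandsong1}. Moreover, \autoref{twoassump} stresses that this regime is meant to allow unbounded $\sigma$.

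Hence the bound $L_U\|\sigma\|_\infty/n$ borrowed from \eqref{5oversingeforzs2} is unavailable. Your fallback is therefore not optional; it is exactly the argument the paper uses. Since $\psi$ is $1$-Lipschitz,
\begin{align*}
\big|\mathbb E[\psi(\widetilde Z_t^{i,i})-\psi(H_Z(t,X_t^i,\theta_t^n))]\big|
\leq\frac{L_U}{n}\,\mathbb E\big[|\sigma(t,X_t^i,\theta_t^n)|\big].
\end{align*}
Then $\sup_{n,i,t}\mathbb E[|\sigma(t,X_t^i,\theta_t^n)|^2]<\infty$ follows from two facts: the growth bound $|\sigma(t,x,\mu)|\leq C(1+|x|+\mathcal W_2(\mu,\delta_0))$, and the uniform-in-$n$ second-moment estimate for the forward particles. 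With that replacement, your argument is complete.
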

\begin{proof}
	\autoref{fandsong1} implies \textbf{(HFR)} of
	Frikha and Song \cite{frikhaandsong_26}, with the factor
	\(\widetilde\sigma=\sigma\).
	 Indeed, the bounds for \(b, A\), including
	their time derivatives, give their \(\mathcal C_f^{1,2,2}\) condition;
	the mixed-derivative assumptions give the required flow regularity.
	 \autoref{fandsong2} implies the functional hypotheses of
	\cite[Theorem 3]{frikhaandsong_26},
	with the additional   requirement that  \(D_\mu^2\Phi\)
	be globally Lipschitz.
	Neither \textbf{(HR)} nor \textbf{(HE)} is needed for that theorem.
	
	For \(h_l=T/l\), let \(k_l(s)=h_l\lfloor \frac{s}{h_l} \rfloor\) for \(s<T\).
	On the same initial data and Brownian motions, define the continuous
	Euler interpolation by
	\begin{align*}
		X_t^{i,n,h_l}=X_0^i
		&+\int_0^t b(k_l(s),X_{k_l(s)}^{i,n,h_l},
		\theta_{k_l(s)}^{n,h_l}) ds
		+\int_0^t\sigma(k_l(s),X_{k_l(s)}^{i,n,h_l},
		\theta_{k_l(s)}^{n,h_l}) dW_s^i,
	\end{align*}
	where 
	\begin{align*}
	  \theta_t^{n,h_l}\triangleq  \frac1n\sum_{j=1}^n\delta_{X_t^{j,n,h_l}}.
	\end{align*}
	Write \(\mathscr A=\{\Phi_{t,\varphi}^Y,\Phi_{t,\psi}^Z:
	t\leq T,\ \varphi\in\mathcal F_{3,1},\ \psi\in\mathcal G_{3,1}\}\).
By \cite[Theorem 3]{frikhaandsong_26} and the uniform functional bounds, we obtain
	\begin{align*}
		\sup_{F\in\mathscr A}\sup_{t\in [0,T]}
		\Big|\mathbb E \left[ F(\theta_t^{n,h_l})-F(\theta_t) \right]\Big|
		\leq C\Big(\frac{1}{n}+h_l\Big).
	\end{align*}
	For each fixed \(n\), finite-dimensional Euler convergence yields
 \begin{align*}
	\mathbb{E}\left[\sup\limits_{t\in[0,T]}
	\mathcal{W}_2^2(\theta_t^{n,h_{l}},\theta_t^n)\right]
	\leq \frac{1}{n}\sum_{j=1}^{n}
	\mathbb{E}\left[\sup\limits_{t\in[0,T]}
	|X_t^{j,n,h_{l}}-X_t^{j,n}|^2\right]\longrightarrow 0.
\end{align*}
	All \(F\in\mathscr A\) are uniformly \(\mathcal W_2\)-Lipschitz.
	The triangle inequality, followed by \(l\longrightarrow \infty\) at fixed \(n\), gives
	\begin{align*}
		\sup_{F\in\mathscr A}\sup_{t\in [0,T]}
		\Big|\mathbb E\left[F(\theta_t^n)-F(\theta_t)\right] \Big|\leq \frac{C}{n}.
	\end{align*}
	Exchangeability and  \autoref{z=hz} identify these functional errors
	with those of \(\widetilde Y^i\) and \(H_Z(t,X_t^i,\theta_t^n)\).
	Finally, the uniform second-moment estimate for the forward particles
	and linear growth of \(\sigma\) imply
 \begin{align*}
	\sup\limits_{n\geq 1}
	\sup\limits_{1\leq i\leq n}
	\sup\limits_{t\in [0,T]}
	\mathbb{E} \left[ 
	|\sigma(t,X_t^i,\theta_t^n)|^2\right]<\infty.
\end{align*}
	Consequently, for \(\psi\in\mathcal G_{3,1}\),
	\begin{align*}
		\sup_{t\in [0,T]}
	\Big| \mathbb{E}\left[\psi(\widetilde{Z}_t^{i,i})
	-\psi(H_Z(t,X_t^i,\theta_t^n))\right]\Big|
		\leq\frac{L_U}{n}\sup_{t \in [0,T]}\mathbb{E}
		\left[|\sigma(t,X_t^i,\theta_t^n)|\right]
		\leq \frac{C}{n}.
	\end{align*}
	This proves \eqref{coralymediate}.
\end{proof}

The following lemma extends  \autoref{empirical} to the variable
diffusion coefficient. Only the estimates of the consistency remainder
and martingale correction change.

\begin{lemma}\label{empirical2}\sl 
	Assume that    \autoref{asp5.2} holds and 
	that either \(\sigma\) is bounded
	or \autoref{fandsong1} holds.
	 Then, for every \(1\leq m\leq n,\)  
	\begin{align}\label{firstone2}
		\mathcal{W}_{2,\|\cdot\|_{\infty}}^2(\mathbb{P}_{(Y^1,\cdots,Y^m)},\mathbb{P}_{(\widetilde{Y}^1,\cdots,\widetilde{Y}^m)})
		+ \mathcal{W}_{2,L^2}^2(\mathbb{P}_{(Z^{1,1},\cdots,Z^{m,m})},\mathbb{P}_{(\widetilde{Z}^{1,1},\cdots,\widetilde{Z}^{m,m})})
		\leq C\frac{m}{n^2},
	\end{align}
	where the constant \(C>0\)  
	depends only on \(d,k,L_U,T\), 
	the parameters in  \autoref{lipp}, and either \(\|\sigma\|_{\infty}\)
	or the constants from \autoref{fandsong1}.
	The particlewise coupling
	estimate \eqref{empirical-coupling} also holds with the intermediate
	processes defined in this section.
\end{lemma}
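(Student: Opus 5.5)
We follow the proof of \autoref{empirical} line by line, with the constant matrix \(\Sigma\) replaced by \(\sigma(t,x,\mu)\). Set
\begin{align*}
u^{i,n}(t,\overrightarrow{x})=U(t,x^i,\mu^n_{\overrightarrow{x}}).
\end{align*}
The first derivatives \(D_{x^j}u^{i,n}\) are exactly those computed in \autoref{empirical}, so the martingale part of \(d\widetilde Y^i_t\) is \(\sum_j\widetilde Z^{i,j}_t dW^j_t\), with \(\widetilde Z\) defined as in this section.

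For the second-order term, each particle now contributes \(\frac12\operatorname{Tr}(\sigma^j_t(\sigma^j_t)^\top D^2_{x^j}u^{i,n})\), where \(\sigma^j_t=\sigma(t,X^j_t,\theta^n_t)\). The diagonal and \(D_vD_\mu U\) pieces reproduce the second-order spatial and measure terms of \eqref{master2}, evaluated at \((X^i_t,\theta^n_t)\). The remaining pieces form the remainder
\begin{align*}
R^{i,n}_t=\frac1n\operatorname{Tr}\big(\sigma^i_t(\sigma^i_t)^\top D_xD_\mu U(t,X^i_t,\theta^n_t)(X^i_t)\big)+\frac1{2n^2}\sum_{j=1}^n\operatorname{Tr}\big(\sigma^j_t(\sigma^j_t)^\top D^2_\mu U(t,X^i_t,\theta^n_t)(X^j_t,X^j_t)\big).
\end{align*}
The drift terms match exactly. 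Combining these with \eqref{master2} gives the analogue of \eqref{widey}, with the generator evaluated at \(\xi^{i,n}_t=H_Z(t,X^i_t,\theta^n_t)\) and \(\eta^n_t=m^{\mathcal Z}(t,\theta^n_t)\).

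\emph{Size of the remainders.} This is the only place where the two cases differ, and it is the main obstacle: we need \(\mathbb E\int_0^T|\mathcal E^{i,n}_t|^2dt\le C/n^2\) without assuming \(\sigma\) bounded. The errors satisfy
\begin{align*}
|R^{i,n}_t|&\le \frac Cn\Big(|\sigma^i_t|^2+\frac1n\sum_j|\sigma^j_t|^2\Big),\\
|\widetilde Z^{i,i}_t-\xi^{i,n}_t|&\le \frac{L_U}{n}|\sigma^i_t|,\\
\mathcal W_2^2(\widetilde\nu^n_t,\eta^n_t)&\le \frac{C}{n^3}\sum_j|\sigma^j_t|^2.
\end{align*}
If \(\sigma\) is bounded, all three are \(O(1/n)\) pointwise and we are done.

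Under \autoref{fandsong1}, \(\sigma\in\mathcal M_2\) is Lipschitz, so
\begin{align*}
|\sigma(t,x,\mu)|\le C\Big(1+|x|+\big(\textstyle\int|v|^2\mu(dv)\big)^{1/2}\Big).
\end{align*}
Then \(|R^{i,n}_t|^2\) involves fourth moments of the forward particles. We obtain these by a routine BDG/Gronwall estimate on \eqref{x1}, giving \(\sup_n\mathbb E\sup_t|X^i_t|^4<\infty\) by exchangeability. This requires \(\mathbb E|X_0|^4<\infty\), which we add as a condition entering the constant; the standing hypothesis only provides \(\mathbb E|X_0|^2<\infty\). Alternatively, since \(A=\sigma\sigma^\top\) is bounded in \(\mathcal M_2\), it suffices to note that \(A\) itself is bounded: its \(\mathcal M_2\) norm includes first derivatives but not the function itself, so boundedness must be checked from \(\partial_tA\) bounds, or else we use the fourth-moment route. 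With either input we get
\begin{align*}
\mathbb E\int_0^T\big(|R^{i,n}_t|^2+|\widetilde Z^{i,i}_t-\xi^{i,n}_t|^2+\mathcal W_2^2(\widetilde\nu^n_t,\eta^n_t)\big)dt\le \frac C{n^2},
\end{align*}
and Lipschitz continuity of \(f\) gives the same bound for \(\mathcal E^{i,n}\). We also check square integrability of \(\widetilde Z\) so that the perturbed BSDE \eqref{pury} makes sense; this again follows from the linear growth of \(\sigma\) and the moment bounds.

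\emph{Stability.} The rest is unchanged from \autoref{empirical}. Apply Itô's formula to \(|\triangle Y^i_t|^2\), use monotonicity and Lipschitz continuity of \(f\), and absorb the diagonal \(Z\)-terms with small \(\delta\). Average over \(i\) and apply Gronwall and BDG to obtain \eqref{newstabili}, that is, \eqref{empirical-coupling} in averaged form. Exchangeability then gives it for each \(i\). Finally, the synchronous coupling of the first \(m\) coordinates gives \eqref{firstone2}.
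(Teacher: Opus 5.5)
\paragraph{Gap: the unbounded case needs a fourth moment you are not given.}
Your It\^o computation (the remainder $R^{i,n}_t$, the martingale part $\sum_j\widetilde Z^{i,j}_t\,dW^j_t$) and your stability step agree with the paper. The case of bounded $\sigma$ is complete. The gap is in the case where only \autoref{fandsong1} is assumed. There you square $|R^{i,n}_t|\le \frac Cn\big(|\sigma^i_t|^2+\frac1n\sum_j|\sigma^j_t|^2\big)$ and use linear growth of $\sigma$. This forces fourth moments of the forward particles, so you add $\mathbb E|X_0|^4<\infty$. That hypothesis is not in the lemma. The constant may depend only on the parameters of \autoref{lipp} with $q=2$ (hence on $\mathbb E|X_0|^2$) and on the constants of \autoref{fandsong1}. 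So as written you prove a weaker statement.

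Your fallback does not work either. $A$ is not bounded in general, and a bound on $\partial_tA$ says nothing about growth in $x$. Also, the $\mathcal M_2$ norm does contain the zero-order term $\sup_t|A(t,0,\delta_0)|$.

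\paragraph{The missing observation.}
\autoref{fandsong1}(i) puts $A=\sigma\sigma^\top$ itself in $\mathcal M_2$. Bounded first derivatives in $(x,\mu)$, together with finite $\sup_t|A(t,0,\delta_0)|$, give
\begin{align*}
|A(t,x,\mu)|\leq C\Big(1+|x|+\Big(\int_{\mathbb R^d}|v|^2\mu(dv)\Big)^{1/2}\Big).
\end{align*}
Bound the remainder by $|A|$ rather than by $|\sigma|^2$:
\[
|R^{i,n}_t|\le \frac Cn\Big(|A(t,X^i_t,\theta^n_t)|+\frac1n\sum_j|A(t,X^j_t,\theta^n_t)|\Big).
\]
Then Jensen gives
\[
\mathbb E\int_0^T|R^{i,n}_t|^2dt\le \frac{C}{n^2}\,\sup_{n,i,t}\mathbb E\,|A(t,X^i_t,\theta^n_t)|^2,
\]
and this needs only the uniform second-moment bound for the forward particles.

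Likewise, $|\sigma|^2=\operatorname{Tr}A\le\sqrt d\,|A|$ grows at most linearly. Hence
\[
\mathbb E|\widetilde Z^{i,i}_t-\xi^{i,n}_t|^2\le \frac{L_U^2}{n^2}\,\mathbb E|\sigma^i_t|^2\le \frac{C}{n^2}.
\]
The term $\mathcal W_2^2(\widetilde\nu^n_t,\eta^n_t)$ and the square integrability of $\widetilde Z$ also follow from second moments. With this replacement your argument goes through under the stated hypotheses and coincides with the paper's proof.
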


\begin{proof}
For the general diffusion coefficient \(\sigma\), \eqref{widey} becomes
 \begin{align}\label{wideynew}
 	d\widetilde{Y}_t^i
 	=\Big(-f(t,X_t^i,\theta_t^n,\widetilde{Y}_t^i,m^U(t,\theta_t^n),H_Z(t,X_t^i,\theta_t^n),m^{\mathcal{Z}}(t,\theta_t^n))+R_t^{i,n}\Big) dt
 	+\sum_{j=1}^{n}\widetilde{Z}_t^{i,j}dW_t^j,
 \end{align}
 where
 \begin{align*}
 	&	m^U(t,\theta_t^n)=\frac{1}{n}\sum_{j=1}^{n}\delta_{U(t,X_t^j,\theta_t^n)}
 	\q\ \hbox{and} \q\ 
 	m^{\mathcal{Z}}(t,\theta_t^n)=
 	\frac{1}{n}\sum_{j=1}^{n}\delta_{D_x U(t,X_t^j,\theta_t^n)\sigma(t,X_t^j,\theta_t^n)},\\
 	&R_t^{i,n}=
 	\frac{1}{n}\operatorname{Tr}\Big(\sigma(t,X_t^i,\theta_t^n) \sigma(t,X_t^i,\theta_t^n)^\top D_x D_\mu U(t,X_t^i,\theta_t^n) (X_t^i)\Big)\\
 	&\q\ \q\  +\frac{1}{2n^2} \sum_{j=1}^{n}
 	\operatorname{Tr} \Big(\sigma(t,X_t^j,\theta_t^n) \sigma(t,X_t^j,\theta_t^n)^\top D_\mu^2 U(t,X_t^i,\theta_t^n)(X_t^j,X_t^j) \Big).
 \end{align*}
 The definitions of \(\xi_t^{i,n}\) and \(\eta_t^n\) in the proof of \autoref{empirical} are replaced by
 \begin{align*}
  \xi_t^{i,n}=H_Z(t,X_t^i,\theta_t^n)\q\ \hbox{and} \q\ 
  \eta_t^n=m^{\mathcal{Z}}(t,\theta_t^n).
 \end{align*}
 Since   the relevant derivatives of  \(U\) are bounded,
\begin{align*}
	 |R_t^{i,n}|
	 \leq 
	 \frac{C}{n}
	 \Bigg( \big|\sigma(t,X_t^i,\theta_t^n)\sigma(t,X_t^i,\theta_t^n)^\top\big|
	 +\frac{1}{n}
	 \sum_{j=1}^{n}\big|\sigma(t,X_t^j,\theta_t^n)\sigma(t,X_t^j,\theta_t^n)^\top\big| \Bigg).
\end{align*}
If \(\sigma\) is bounded, then
 \(|R_t^{i,n}|\leq 
 \frac{C}{n}\).  Otherwise, \(\sigma\) satisfies 
 \autoref{fandsong1},
\begin{align*}
	 |A(t,x,\mu)|
	 \leq 
	 C \Bigg( 1+|x| +\Big( \int_{\mathbb{R}^d}
	 |v|^2 \mu (dv)  \Big)^{\frac{1}{2}} \Bigg).
\end{align*}
By the standard SDE stability estimates and \(\mathbb{E}\left[|X_0|^2\right]<\infty,\) we get
\begin{align*}
	\sup\limits_{n\geq 1}
	\sup\limits_{1\leq i\leq n}
 \sup\limits_{t\in [0,T]} 
 \mathbb{E}\left[   |A(t,X_t^i,\theta_t^n)|^2 \right]<\infty.
\end{align*}
Hence Jensen's inequality  gives
\begin{align*}
 \sup\limits_{1\leq i\leq n}
 \mathbb{E}\int_0^T |R_t^{i,n}|^2 dt
 \leq 
 \frac{C}{n^2}.
\end{align*}
 Moreover, the Frobenius norms satisfy \(|\sigma(t,x,\mu)|^2=\operatorname{Tr}A(t,x,\mu)\leq\sqrt d\,|A(t,x,\mu)|\),
 \begin{align*}
\mathbb{E}\int_0^T |\widetilde{Z}_t^{j,j}-H_Z(t,X_t^j,\theta_t^n)|^2dt
 &\leq \frac{C}{n^2}\mathbb{E}\int_0^T |
  \sigma (t,X_t^j,\theta_t^n)|^2dt\\
& \leq \frac{CT}{n^2}\sup\limits_{t\in [0,T]}
 \mathbb{E}\left[   |\sigma(t,X_t^j,\theta_t^n)|^2 \right]\\
& \leq 
  \frac{CT\sqrt{d}}{n^2}\sup\limits_{t\in [0,T]}\Bigg(
  \mathbb{E}\left[   |A(t,X_t^j,\theta_t^n)|^2 \right]\Bigg)^{\frac{1}{2}}\\
 &\leq \frac{C}{n^2}.
 \end{align*}
  These bounds give
 \(\frac{1}{n}\sum_{i=1}^n \mathbb E\int_0^T|\mathcal E_t^{i,n}|^2dt\leq  \frac{C}{n^2}\)
 for the generator perturbation defined as in  \autoref{empirical}.
 The same averaged BSDE stability estimate, followed by exchangeability,
 proves \eqref{empirical-coupling} and \eqref{firstone2}.
\end{proof}

We now state the transfer step independently of the particular forward
weak-error theorem used to estimate its inputs.

\begin{proposition}[Weak-error transfer]\label{abstract-transfer}\sl
Assume the hypotheses of \autoref{empirical2} and suppose that
\begin{align*}
 K_\sigma=\sup\limits_{n\geq 1}
 \sup\limits_{1\leq i\leq n}
 \sup\limits_{t\in [0,T]}
 \mathbb E\left[ |\sigma(t,X_t^i,\theta_t^n)|^2\right] <\infty.
\end{align*}
Define
\begin{align*}
 \varepsilon_n^Y
 &=\sup_{t\in[0,T]}\sup_{\varphi\in\mathcal F_{3,1}}
 \left|\mathbb E \left[ \Phi_{t,\varphi}^Y(\theta_t^n)
              -\Phi_{t,\varphi}^Y(\theta_t)\right] \right|\q\ \hbox{and} \q\ 
 \varepsilon_n^Z
 =\int_0^T\sup_{\psi\in\mathcal G_{3,1}}
 \left|\mathbb E\left[\Phi_{t,\psi}^Z(\theta_t^n)
              -\Phi_{t,\psi}^Z(\theta_t)\right]\right|dt.
\end{align*}
Then, for every \(i=1,\ldots,n\),
\begin{align}\label{abstract-transfer-bound}
 \sup_{t\in[0,T]}d_{3,1}^Y(\mathbb P_{Y_t^i},\mathbb P_{Y_t})
 +\int_0^T d_{3,1}^Z(\mathbb P_{Z_t^{i,i}},\mathbb P_{Z_t})dt
 \leq \varepsilon_n^Y+\varepsilon_n^Z+\frac Cn,
\end{align}
where \(C\) depends only on the constants in \autoref{empirical2},
\(L_U,T,K_\sigma\). No \(\mathcal M_2\)- or \(\mathcal M_3\)-regularity
of the composite functionals is required for this implication.
\end{proposition}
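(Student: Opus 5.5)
The plan is a triangle inequality through the intermediate particles \(\widetilde Y^i,\widetilde Z^{i,i}\) of this section. For each fixed \(t\) and \(\varphi\in\mathcal F_{3,1}\) we split
\begin{align*}
 \mathbb E\left[\varphi(Y_t^i)-\varphi(Y_t)\right]
 =\mathbb E\left[\varphi(Y_t^i)-\varphi(\widetilde Y_t^i)\right]
 +\mathbb E\left[\varphi(\widetilde Y_t^i)-\varphi(Y_t)\right].
\end{align*}
The second term is handled by exchangeability and \autoref{z=hz}. Since \(\widetilde Y_t^i=U(t,X_t^i,\theta_t^n)\), we have
\begin{align*}
\mathbb E\left[\varphi(\widetilde Y_t^i)\right]=\mathbb E\left[\Phi^Y_{t,\varphi}(\theta_t^n)\right],
\qquad
\mathbb E\left[\varphi(Y_t)\right]=\Phi^Y_{t,\varphi}(\theta_t).
\end{align*}
This term is therefore bounded by \(\varepsilon_n^Y\) uniformly in \(t\) and \(\varphi\); no regularity of \(\Phi\) is used here, only the identity. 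For the first term, \(\varphi\) is \(1\)-Lipschitz, so it is at most \(\mathbb E|Y_t^i-\widetilde Y_t^i|\). By Cauchy--Schwarz and the particlewise estimate \eqref{empirical-coupling}, which \autoref{empirical2} asserts in this setting, this is at most \((C/n^2)^{1/2}=C/n\), uniformly in \(t\). Taking the supremum over \(\varphi\) and then over \(t\) gives the \(Y\)-part.

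For \(Z\) we insert two intermediate objects, \(\widetilde Z^{i,i}_t\) and \(H_Z(t,X_t^i,\theta_t^n)\). For a.e. \(t\) and \(\psi\in\mathcal G_{3,1}\) we write
\begin{align*}
|\mathbb E[\psi(Z^{i,i}_t)-\psi(Z_t)]|
\leq{}& \mathbb E|Z_t^{i,i}-\widetilde Z^{i,i}_t|
+\mathbb E|\widetilde Z^{i,i}_t-H_Z(t,X_t^i,\theta_t^n)|\\
&+\left|\mathbb E\left[\Phi^Z_{t,\psi}(\theta_t^n)-\Phi^Z_{t,\psi}(\theta_t)\right]\right|.
\end{align*}
The last term uses exchangeability and \(\mathbb P_{Z_t}=\mathbb P_{H_Z(t,X_t,\theta_t)}\), as in \autoref{z=hz}. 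The first two terms do not depend on \(\psi\), so the supremum over \(\mathcal G_{3,1}\) passes inside them. Integrating over \([0,T]\):
\begin{itemize}
\item The first term, by Cauchy--Schwarz in \((t,\omega)\), is at most \(\sqrt{T}\,(\mathbb E\int_0^T|Z^{i,i}_t-\widetilde Z^{i,i}_t|^2dt)^{1/2}\leq C/n\), using \eqref{empirical-coupling}.
\item The second term equals \(n^{-1}\mathbb E|D_\mu U(t,X_t^i,\theta_t^n)(X_t^i)\sigma(t,X_t^i,\theta_t^n)|\leq L_U K_\sigma^{1/2}/n\).
\item The third term integrates exactly to \(\varepsilon_n^Z\).
\end{itemize}

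The main obstacle is measurability and representative issues, not estimation. The map \(t\mapsto d^Z_{3,1}(\mathbb P_{Z^{i,i}_t},\mathbb P_{Z_t})\) must be measurable. The integrand in \(\varepsilon_n^Z\) must be measurable as well. Fixed-time laws of the genuine \(Z^{i,i}\) are only defined for a.e. \(t\). I would handle these points by the countable dense subset of \(\mathcal G_{3,1}\) noted after the definition of the metrics, which reduces each supremum to a countable one of measurable functions of \(t\). I would also use that any two predictable representatives agree \(dt\otimes d\mathbb P\)-a.e., so the time integral is unaffected. Everything else is Lipschitz bounds plus the already proven \(O(n^{-2})\) squared coupling, which gives \(O(n^{-1})\) in \(L^1\).
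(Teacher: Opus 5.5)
Your proposal is correct and follows the paper's proof essentially step for step. It uses the same triangle inequality through \(\widetilde Y^i\), \(\widetilde Z^{i,i}\) and \(H_Z(t,X_t^i,\theta_t^n)\), and the same exchangeability identification of the intermediate errors with \(\varepsilon_n^Y,\varepsilon_n^Z\). The coupling terms are bounded in the same way, through \eqref{empirical-coupling}, the \(1\)-Lipschitz property of the test functions and Cauchy--Schwarz. The \(n^{-1}D_\mu U\,\sigma\) correction receives the same bound \(L_UT\sqrt{K_\sigma}/n\). Your measurability and representative remarks match the paper's countable-dense-subset observation and Remark~\ref{z=hz}, which the paper's proof uses implicitly.
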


\begin{proof}
Set \(\widehat Z_t^i=H_Z(t,X_t^i,\theta_t^n)\).
Exchangeability and the limiting Markov representation give
\begin{align*}
 \sup_{t\in [0,T]}d_{3,1}^Y(\mathbb P_{\widetilde Y_t^i},\mathbb P_{Y_t})
 &=\varepsilon_n^Y\q\ \hbox{and} \q\ 
 \int_0^T d_{3,1}^Z(\mathbb P_{\widehat Z_t^i},\mathbb P_{Z_t})dt
 =\varepsilon_n^Z.
\end{align*}
Since the test functions are 1-Lipschitz, the particlewise estimate in
\autoref{empirical2} and Cauchy--Schwarz yield
\begin{align*}
 \sup_{t\in [0,T]}d_{3,1}^Y(\mathbb P_{Y_t^i},\mathbb P_{\widetilde Y_t^i})
 +\int_0^T d_{3,1}^Z(\mathbb P_{Z_t^{i,i}},\mathbb P_{\widetilde Z_t^{i,i}})dt
 \leq \frac Cn.
\end{align*}
Moreover,
\(\widetilde Z_t^{i,i}-\widehat Z_t^i
 =\frac{1}{n} D_\mu U(t,X_t^i,\theta_t^n)(X_t^i)\sigma(t,X_t^i,\theta_t^n)\),
so
\begin{align*}
 \int_0^T d_{3,1}^Z(\mathbb P_{\widetilde Z_t^{i,i}},\mathbb P_{\widehat Z_t^i})dt
 \leq \frac{L_UT\sqrt{K_\sigma}}n.
\end{align*}
The triangle inequality proves \eqref{abstract-transfer-bound}.
\end{proof}

The two forward frameworks above both give
\(\varepsilon_n^Y+\varepsilon_n^Z=O(n^{-1})\). The proposition also
accepts other bounds for the same composite observables. A faster
forward rate alone does not yield a backward rate faster than
\(n^{-1}\), since the consistency term must then be analyzed further.

\begin{theorem}\label{5main}\sl 
 Assume that \autoref{asp5.2} holds. Moreover,
  suppose that either
 \autoref{5forsde} and \autoref{phi} hold,
 or \autoref{fandsong1} and \autoref{fandsong2} hold.
 Then for each \(i=1,\cdots,n\),
 there exists a constant \(C>0,\) independent of 
 \(n,i,t,\) such that 
 \begin{align}\label{5mainestimate}
 \sup\limits_{t\in [0,T]} d_{3,1}^Y(\mathbb{P}_{Y_t^i},
 \mathbb{P}_{Y_t} ) 
 +\int_0^T d_{3,1}^Z(  \mathbb{P}_{Z_t^{i,i}},
 \mathbb{P}_{Z_t})dt
 \leq \frac{C}{n}.
 \end{align}
 \end{theorem}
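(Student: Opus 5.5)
The plan is to apply \autoref{abstract-transfer} and check its hypotheses under each of the two alternative sets of assumptions. Its conclusion \eqref{abstract-transfer-bound} reduces \eqref{5mainestimate} to three facts: the hypotheses of \autoref{empirical2} hold, the constant \(K_\sigma\) is finite, and \(\varepsilon_n^Y+\varepsilon_n^Z\leq C/n\). Once these are in place the consistency term \(C/n\) is already supplied by the proposition.

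\emph{Hypotheses of \autoref{empirical2} and \(K_\sigma\).} In the first framework, \autoref{5forsde}(ii) makes \(\sigma\) bounded. Hence \autoref{empirical2} applies and \(K_\sigma\leq\|\sigma\|_\infty^2\). In the second framework, \autoref{fandsong1} is exactly the alternative hypothesis of \autoref{empirical2}. For \(K_\sigma\), I would reuse the bound from the proof of \autoref{frkandsong_cri}. The \(\mathcal M_2\)-regularity of \(\sigma\) gives linear growth. Combined with \autoref{lipp} for \(p=q=2\) and \(\mathbb E|X_0|^2<\infty\), the standard forward moment estimate then gives \(\sup_{n,i,t}\mathbb E|X_t^i|^2<\infty\) and \(\sup_{n,t}\mathbb E\int|v|^2\theta_t^n(dv)<\infty\). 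Together these yield \(K_\sigma<\infty\).

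\emph{Forward weak errors.} By exchangeability and \autoref{z=hz}, \(\mathbb E[\Phi^Y_{t,\varphi}(\theta_t^n)]=\mathbb E[\varphi(\widetilde Y_t^i)]\) and \(\Phi^Y_{t,\varphi}(\theta_t)=\mathbb E[\varphi(Y_t)]\), with the analogous identities for \(\Phi^Z_{t,\psi}\) and \(H_Z\). In the first framework, \eqref{5second} and \eqref{5oversingleforz} in the proof of \autoref{sharpsecodwithweak} bound these differences by \(C/n\) uniformly in \(t\), \(\varphi\) and \(\psi\). Thus \(\varepsilon_n^Y\leq C/n\), and \(\varepsilon_n^Z\leq CT/n\) after integrating in time. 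In the second framework, the proof of \autoref{frkandsong_cri} gives \(\sup_{F\in\mathscr A}\sup_t|\mathbb E[F(\theta_t^n)-F(\theta_t)]|\leq C/n\) directly; this comes from the Euler bound of \cite{frikhaandsong_26} followed by letting \(l\to\infty\). Since \(\mathscr A\) contains all the composite functionals, this gives the same two estimates.

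The main point needing care is measurability and uniformity. The supremum over \(\psi\) inside the time integral in \(\varepsilon_n^Z\) must be measurable in \(t\); this follows from the countable dense subsets noted in \autoref{sec5}. The constants must also be independent of \(t\), \(\varphi\), \(\psi\), which is guaranteed by the uniform bounds in \autoref{phi}, \autoref{fandsong2} and \autoref{timeinhomogeneous}. Substituting these estimates into \eqref{abstract-transfer-bound} then yields \eqref{5mainestimate}, with \(C\) independent of \(n\), \(i\) and \(t\).
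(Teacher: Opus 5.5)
Your proposal is correct and follows the paper's own proof. The paper likewise applies \autoref{abstract-transfer}, takes \(\varepsilon_n^Y+\varepsilon_n^Z\leq C/n\) from the proofs of \autoref{sharpsecodwithweak} and \autoref{frkandsong_cri}, and bounds \(K_\sigma\) using boundedness of \(\sigma\) in the first regime and the uniform forward second-moment estimate plus linear growth of \(\sigma\) in the second; your remarks on the hypotheses of \autoref{empirical2} and on measurability of the time-integrated supremum only make explicit what the paper leaves implicit.
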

 
 \begin{proof}
The forward functional estimates established in the proofs of
\autoref{sharpsecodwithweak} and \autoref{frkandsong_cri}, respectively,
give \(\varepsilon_n^Y+\varepsilon_n^Z\leq C/n\) under the two sets of
assumptions. The required bound on \(K_\sigma\) follows from boundedness
of \(\sigma\) in the first case and from the uniform forward
second-moment estimate and linear growth of \(\sigma\) in the second.
Thus \autoref{abstract-transfer} proves \eqref{5mainestimate}.
\end{proof}

  \begin{remark}\label{twoassump}\sl 
 The two regimes in \autoref{5main} provide alternative sufficient
 conditions for the   weak-error estimate. 
 More precisely, the first
 regime requires the third-order state and measure regularity of \(b\), \(\sigma\) and the induced test functionals, 
 the boundedness of \(\sigma\), and a second moment of the initial law.
 The last requirement suffices by the direct initial-bias argument in the proof of \autoref{sharpsecodwithweak}.
 The second regime imposes second-order regularity on \(b,
 A=\sigma\sigma^\top\) and the chosen factor \(\sigma\), together with the time regularity and reverse mixed-derivative conditions.
 It requires only second-order regularity of the induced test functionals and likewise a second moment of the initial law, and does not require \(\sigma\) to be bounded.
 This allowance is restricted by the regularity of \(A\): bounded first derivatives of \(A\) imply
 \begin{align*}
 |\sigma(t,x,\mu)|^2=\operatorname{Tr}\big(A(t,x,\mu)\big)
 \leq C\left(1+|x|+\Big(\int_{\mathbb R^d}|v|^2\mu(dv)\Big)^{1/2}\right).
 \end{align*}
 Thus the second regime does not cover arbitrary diffusions with linear growth in \(x\).
 Neither regime requires uniform ellipticity, in particular, both allow 
 \(A\) to be degenerate.
  \end{remark}
  
  We next formulate quantitative weak 
    propagation of chaos for fixed particle marginals.
    Fix \(m\in \mathbb{N}\), and equip \((\mathbb{R}^k)^m\)
    and \( (\mathbb{R}^{k\times d})^m \) with their Euclidean
    product norms.
   Define the following  two classes of test functions:
\begin{align*}
	\mathcal{F}_{3,1}^m
	=\{ \varphi \in C_b^{3,1}((\mathbb{R}^k)^m): \|\varphi\|_{ 3,1}\leq 1\}\q\ 
	\hbox{and} \q\ 
	\mathcal{G}_{3,1}^m
	=\{ \psi \in C_b^{3,1}((\mathbb{R}^{k\times d})^m): \|\psi\|_{ 3,1}\leq 1\}.
\end{align*}

For any \(\mu,\mu' \in \mathcal{P}_2((\mathbb{R}^k)^m)\), 
\(\nu, \nu' \in \mathcal{P}_2((\mathbb{R}^{k\times d})^m)\),
the corresponding integral probability metrics are denoted by \(d_{3,1}^{Y,m}\) and \(d_{3,1}^{Z,m}\), respectively, and are defined by
\begin{align*}
	&d_{3,1}^{Y,m} (\mu,\mu')=\sup\limits_{\varphi \in \mathcal{F}^m_{3,1}} \Big|\int_{(\mathbb{R}^k)^m} \varphi (y)\mu(dy)
	-\int_{(\mathbb{R}^k)^m} \varphi(y) \mu'(dy) \Big|,\\
	&d_{3,1}^{Z,m}(\nu,\nu')
	=\sup\limits_{\psi \in \mathcal{G}^m_{3,1}}
	\Big| \int_{(\mathbb{R}^{k\times d})^m} \psi(z) \nu(dz)
	-\int_{(\mathbb{R}^{k\times d})^m} \psi(z) \nu'(dz)    \Big|.
\end{align*}

For every \(t\in [0,T]\), \(\varphi\in \mathcal{F}^m_{3,1}\) and 
\(\psi \in \mathcal{G}^m_{3,1}\),  
let
\begin{align*}
	&\Phi_{t,\varphi}^{Y,m}(\mu) 
	=\int_{(\mathbb{R}^d)^m} \varphi\big( U(t,x_1,\mu),\cdots,
	U(t,x_m,\mu)
	\big) \mu^{\otimes m} (dx_1,\cdots,dx_m),\\
	&\Phi_{t,\psi}^{Z,m} (\mu) 
	=\int_{(\mathbb{R}^{ d})^m} 
	\psi\big( H_Z(t,x_1,\mu),\cdots,
	H_Z(t,x_m,\mu) \big) \mu^{\otimes m}(dx_1,\cdots,dx_m).
\end{align*}

We extend the single-particle   assumptions of
\autoref{phi} and \autoref{fandsong2} to the fixed-\(m\)  marginal setting
 in the following two cases:

\begin{assumption}\label{phim}\rm 
		For every \(t\in [0,T]\), \(\varphi\in \mathcal{F}^m_{3,1}\),
	\(\psi \in \mathcal{G}^m_{3,1} \),
	the functionals	\( \Phi_{t,\varphi}^{Y,m},  \Phi_{t,\psi}^{Z,m}\) belong to \( \mathcal{M}_3 (\mathcal{P}_2(\mathbb{R}^d))\), and   there
 exists a constant \(L^{(m)}_{\mathcal{M}_3}>0\) such that 
	\begin{align*}
		\sup\limits_{t\in [0,T]}\Big\{ 
		\sup\limits_{\varphi \in \mathcal{F}^m_{3,1}}\|\Phi_{t,\varphi}^{Y,m}\|_{\mathcal{M}_3(\mathcal{P}_2)}
		+\sup\limits_{\psi \in \mathcal{G}^m_{3,1}} 
		\| \Phi_{t,\psi}^{Z,m} \|_{\mathcal{M}_3 (\mathcal{P}_2)}
		\Big\}\leq L^{(m)}_{\mathcal{M}_3}.
	\end{align*}
\end{assumption}

\begin{assumption}\label{fandsong2m}\rm 
	For every \(t\in [0,T]\), \(\varphi\in \mathcal{F}^m_{3,1}\),
	\(\psi \in \mathcal{G}^m_{3,1} \),
the functionals \( \Phi_{t,\varphi}^{Y,m},  \Phi_{t,\psi}^{Z,m}\) belong to \( \mathcal{M}_2 (\mathcal{P}_2(\mathbb{R}^d))\), and    there
	exists a constant \(L^{(m)}_{\mathcal{M}_2}>0\) such that 
	\begin{align*}
		\sup\limits_{t\in [0,T]}\Big\{ 
		\sup\limits_{\varphi \in \mathcal{F}^m_{3,1}}\|\Phi_{t,\varphi}^{Y,m}\|_{\mathcal{M}_2(\mathcal{P}_2)}
		+\sup\limits_{\psi \in \mathcal{G}^m_{3,1}} 
		\| \Phi_{t,\psi}^{Z,m} \|_{\mathcal{M}_2 (\mathcal{P}_2)}
		\Big\}\leq L^{(m)}_{\mathcal{M}_2}.
	\end{align*}
\end{assumption}

Now we  generalize  the one-particle estimate of \autoref{5main}
to   fixed-\(m\)   marginals, 
where
the proof  additionally requires the preceding weak functional estimate and  a correction term for the discrepancy between
sampling   with and without replacement.

To separate the forward input from the block comparison, define
\begin{align*}
	\varepsilon_{m,n}^Y
	&=\sup_{t\in[0,T]}\sup_{\varphi\in\mathcal F^m_{3,1}}
	\left|\mathbb E \left[ \Phi_{t,\varphi}^{Y,m}(\theta_t^n)
	-\Phi_{t,\varphi}^{Y,m}(\theta_t)\right] \right|\q\ \hbox{and} \q\ 
	\varepsilon_{m,n}^Z
	=\int_0^T\sup_{\psi\in\mathcal G^m_{3,1}}
	\left|\mathbb E\left[\Phi_{t,\psi}^{Z,m}(\theta_t^n)
	-\Phi_{t,\psi}^{Z,m}(\theta_t)\right]\right|dt.
\end{align*}
\begin{theorem}\label{5maintotal}\sl 
	Fix \(m\in \mathbb{N}\).
	 Assume that \autoref{asp5.2} holds. Moreover, suppose that either 
	\autoref{5forsde} and \autoref{phim} hold,
	or \autoref{fandsong1} and \autoref{fandsong2m} hold.
 Then
 there exists a constant \(C_m>0\) such that, 
 for every \(n\geq m,\)
	\begin{align}\label{5mainestimatetotal}
		\sup\limits_{t\in [0,T]} d_{3,1}^{Y,m}(\mathbb{P}_{(Y_t^1,\cdots,Y_t^m)},
		(\mathbb{P}_{Y_t})^{\otimes m} ) 
		+\int_0^T d_{3,1}^{Z,m}(  \mathbb{P}_{(Z_t^{1,1},\cdots,Z_t^{m,m})},
	(	\mathbb{P}_{Z_t})^{\otimes m})dt
		\leq \frac{C_m}{n},
	\end{align}
	where \(C_m\) depends only on \(d,k,m,T\), the second-moment bound on \(X_0\),
and the bounds in the applicable assumptions, but is independent of \(n\).
More precisely, the left-hand side of \eqref{5mainestimatetotal} is bounded by
\begin{align*}
 \varepsilon_{m,n}^Y+\varepsilon_{m,n}^Z
 +(1+T)\frac{m(m-1)}n+C_T\frac{\sqrt m}{n},
\end{align*}
where \(C_T\) is independent of \(m,n\).
\end{theorem}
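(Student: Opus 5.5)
The plan is to run the proof of \autoref{abstract-transfer} at the level of $m$-particle blocks. We use the chain of comparisons
\[
(Y^i)_{i\le m}\;\to\;(\widetilde Y^i)_{i\le m}\;\to\;\text{block evaluated through }\Phi^{Y,m}_{t,\varphi}(\theta^n_t)\;\to\;\Phi^{Y,m}_{t,\varphi}(\theta_t)=\mathbb E\varphi(\bar Y^1_t,\dots,\bar Y^m_t),
\]
and the analogous chain for the integrands, with $H_Z$ as the Markov term. The first link is the coupling estimate. Each test function $\varphi\in\mathcal F^m_{3,1}$ is $1$-Lipschitz for the Euclidean product norm. Therefore \eqref{empirical-coupling}, which carries over to this section via \autoref{empirical2}, together with Cauchy--Schwarz gives
\[
\Big|\mathbb E\varphi(Y^1_t,\dots,Y^m_t)-\mathbb E\varphi(\widetilde Y^1_t,\dots,\widetilde Y^m_t)\Big|\le\Big(\sum_{i\le m}\mathbb E\sup_t|Y^i_t-\widetilde Y^i_t|^2\Big)^{1/2}\le C\sqrt m/n .
\]
The same argument, applied in $L^2(dt\otimes d\mathbb P)$, handles $Z^{i,i}$ versus $\widetilde Z^{i,i}$. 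The correction $\widetilde Z^{i,i}-H_Z(t,X^i_t,\theta^n_t)=n^{-1}D_\mu U\,\sigma$ is controlled by $L_U\sqrt{mK_\sigma}\,T/n$. As in the proof of \autoref{5main}, $K_\sigma$ is finite in both regimes. All these pieces together form the $C_T\sqrt m/n$ term.

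The second link is the combinatorial step. Write $\widetilde Y^i_t=U(t,X^i_t,\theta^n_t)$. By exchangeability,
\[
\mathbb E\varphi(\widetilde Y^1_t,\dots,\widetilde Y^m_t)=\mathbb E\Big[\frac{(n-m)!}{n!}\sum_{\text{distinct }i_1,\dots,i_m}\varphi\big(U(t,X^{i_1}_t,\theta^n_t),\dots,U(t,X^{i_m}_t,\theta^n_t)\big)\Big].
\]
On the other hand, $\Phi^{Y,m}_{t,\varphi}(\theta^n_t)$ is the same average taken over all $n^m$ index tuples, with repetitions allowed. The fraction of tuples containing a repeated index is at most $\binom m2/n$, and $\|\varphi\|_\infty\le1$. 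Comparing the two averages, each with total weight one, therefore costs at most $2\binom m2/n=m(m-1)/n$ deterministically, uniformly in $t$ and $\varphi$. For $Z$ the same bound integrated over $[0,T]$ gives $Tm(m-1)/n$. Together these give the $(1+T)m(m-1)/n$ term.

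The third link is by definition bounded by $\varepsilon^Y_{m,n}$, respectively $\varepsilon^Z_{m,n}$. Here we use $\Phi^{Y,m}_{t,\varphi}(\theta_t)=\mathbb E\varphi(Y_t)^{\otimes m}$, which holds because $Y_t=U(t,X_t,\theta_t)$ and $Z^*_t=H_Z(t,X_t,\theta_t)$ by \autoref{z=hz}. Summing the three links by the triangle inequality and taking suprema gives the displayed refined bound.

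It remains to show $\varepsilon^Y_{m,n}+\varepsilon^Z_{m,n}\le C_m/n$ in each regime. This is the main point, although it is a reuse of earlier arguments rather than a new one. In the first regime we repeat the proof of \autoref{sharpsecodwithweak} with $\Phi^{Y,m}_{t,\varphi}$ and $\Phi^{Z,m}_{t,\psi}$ in place of the single-particle functionals. Under \autoref{phim}, \autoref{timeinhomogeneous} gives uniform $\mathcal M_3$ bounds on $V_t^{\Phi}$. The empirical-measure It\^o formula then leaves the $\frac1{2n}\operatorname{Tr}(\sigma\sigma^\top D^2_\mu V)$ drift and the initial bias, and both are $O(n^{-1})$ with constants proportional to $L^{(m)}_{\mathcal M_3}$. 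In the second regime we follow the proof of \autoref{frkandsong_cri}: apply \cite[Theorem 3]{frikhaandsong_26} to the family $\mathscr A_m$ of $m$-block functionals, which has uniform $\mathcal M_2$ bounds under \autoref{fandsong2m}, and then remove the Euler step by letting $l\to\infty$ at fixed $n$. Here we need the functionals in $\mathscr A_m$ to be uniformly $\mathcal W_2$-Lipschitz, and this follows from the $\mathcal M_2$ bound. The constants depend on $m$ only through $L^{(m)}$, which yields $C_m$.
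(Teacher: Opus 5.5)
Your proposal is correct and follows the paper's proof essentially step by step. You use the same chain of comparisons: the particlewise coupling from \autoref{empirical2} together with the $n^{-1}D_\mu U\,\sigma$ correction, which yields the $C_T\sqrt m/n$ term; the with/without-replacement correction $2(n^m-(n)_m)/n^m\le m(m-1)/n$ for both components; and the forward functional errors $\varepsilon^Y_{m,n},\varepsilon^Z_{m,n}$, bounded by $C_m/n$ by rerunning \autoref{sharpsecodwithweak} or \autoref{frkandsong_cri} with the $m$-block functionals and then closing with the triangle inequality.
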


\begin{proof}
We prove the result under \autoref{5forsde}, \autoref{asp5.2} and \autoref{phim}. Under \autoref{asp5.2}, \autoref{fandsong1}
and \autoref{fandsong2m}, the same argument applies, with the forward weak estimate below replaced by the 
Frikha and Song \cite{frikhaandsong_26} estimate used in the proof of \autoref{frkandsong_cri}.
	Let 
	\begin{align*}
	\Phi_{t,\varphi}^{Y,m}(\theta_t^n)
	=\frac{1}{n^m}\sum_{j_1,\cdots,j_m=1}^{n}
	\varphi (\widetilde{Y}_t^{j_1},\cdots,\widetilde{Y}_t^{j_m})\q\ \hbox{and}\q\ 
		\Phi_{t,\psi}^{Z,m}(\theta_t^n)
	=\frac{1}{n^m}\sum_{j_1,\cdots,j_m=1}^{n}
	\psi (\widehat{Z}_t^{j_1},\cdots,\widehat{Z}_t^{j_m}),
	\end{align*}
	where \(\widehat{Z}_t^j\triangleq H_Z(t,X_t^j,\theta_t^n)\).
	Among the \(n^m\) ordered tuples, exactly \((n)_m=n(n-1)\cdots(n-m+1)\)
	have distinct indices. Each such tuple has the same expected observable
	by exchangeability. The other tuples contribute at most twice the
	supremum norm of the test function.
Since
\(1-\prod_{l=0}^{m-1}(1-\frac{l}{n}) \leq \sum_{l=0}^{m-1} \frac{l}{n}=\frac{m(m-1)}{2n}\),
\begin{align*}
\Big|\mathbb{E}\left[
\Phi_{t,\varphi}^{Y,m}(\theta_t^n)\right]-
\mathbb{E}\left[\varphi(\widetilde{Y}^1_t,\cdots,\widetilde{Y}_t^m)  
\right] \Big| \leq 
2\|\varphi\|_{\infty}
\Big( \frac{n^m-(n)_m}{n^m} \Big)
\leq \frac{m(m-1)}{n}.
\end{align*}
Similarly,
\begin{align*}
	\Big|\mathbb{E}\left[
	\Phi_{t,\psi}^{Z,m}(\theta_t^n)\right]-
	\mathbb{E}\left[\psi(\widehat{Z}^{1}_t,\cdots,\widehat{Z}_t^{m})  
	\right] \Big| \leq 
	2\|\psi\|_{\infty}
	\Big( \frac{n^m-(n)_m}{n^m} \Big)
	\leq \frac{m(m-1)}{n}.
\end{align*}
Moreover, the Markov representation and
the  use of independent copies  yield 
\begin{align*}
	\Phi_{t,\varphi}^{Y,m} (\theta_t)
	&=\int_{(\mathbb{R}^k)^m} \varphi(\textbf{y}) (\mathbb{P}_{Y_t})^{\otimes m}(\textbf{dy}),
\\
	\Phi_{t,\psi}^{Z,m}(\theta_t)
	&=\int_{(\mathbb{R}^{k\times d})^m } \psi(\textbf{z}) (\mathbb{P}_{Z_t})^{\otimes m}(\textbf{dz}).
\end{align*}
Repeating the argument used in the proof of  \autoref{sharpsecodwithweak},
with \(\Phi_{t,\varphi}^Y\) and \(\Phi_{t,\psi}^Z\) replaced
by \(\Phi_{t,\varphi}^{Y,m}\) and \(\Phi_{t,\psi}^{Z,m}\), respectively, and using \autoref{phim}, we obtain
\begin{align*}
 \sup\limits_{t\in[0,T],\varphi\in \mathcal{F}_{3,1}^m}
 \Big|\mathbb{E}\left[  \Phi_{t,\varphi}^{Y,m}(\theta_t^n) 
 -\Phi_{t,\varphi}^{Y,m}(\theta_t)\right]\Big|
 + \sup\limits_{t\in[0,T],\psi\in \mathcal{G}_{3,1}^m}
 \Big|\mathbb{E}\left[  \Phi_{t,\psi}^{Z,m}(\theta_t^n) 
 -\Phi_{t,\psi}^{Z,m}(\theta_t)\right]\Big|
 \leq 
 \frac{C_m}{n}.
\end{align*}
Since 
\begin{align*}
\widetilde{Z}_t^{i,i}
-\widehat{Z}_t^{i}
=\frac{1}{n}D_\mu U(t,X_t^i,\theta_t^n)(X_t^i)\sigma (t,X_t^i,\theta_t^n),
\end{align*}
the boundedness of \(D_\mu U\)   implies that
\begin{align*}
	\sup\limits_{t\in [0,T]}
	\sup\limits_{\psi\in \mathcal{G}_{3,1}^m}
 \Big|\mathbb{E}\left[ \psi(\widetilde{Z}_t^{1,1},\cdots,\widetilde{Z}_t^{m,m})
 -\psi(\widehat{Z}_t^{1},\cdots,\widehat{Z}_t^{m})
 \right]   \Big|
 \leq  \frac{L_U }{n} \sup\limits_{t\in [0,T]}
 \mathbb{E}\left[ 
 \Bigg( \sum_{j=1}^{m}|\sigma(t,X_t^{j},\theta_t^n)|^2
 \Bigg)^{\frac{1}{2}}\right]\leq 
 \frac{C \sqrt{m}}{n},
\end{align*}
where we use boundedness of \(\sigma\) under the first set of assumptions and its uniform second-moment bound under the second.
The preceding bounds control the intermediate-particle error by
\(\varepsilon_{m,n}^Y+\varepsilon_{m,n}^Z+(1+T)m(m-1)/n+C_T\sqrt m/n\).
Furthermore, by \autoref{empirical2}, we have
\begin{align*}
 \sup\limits_{t\in [0,T]}d_{3,1}^{Y,m}
 (\mathbb{P}_{(Y_t^1,\cdots,Y_t^m)}
 ,\mathbb{P}_{(\widetilde{Y}_t^1,\cdots,\widetilde{Y}_t^m)})
& \leq \frac{C\sqrt{m}}{n}, \\
 \int_0^T  d_{3,1}^{Z,m}
 (\mathbb{P}_{(Z_t^{1,1},\cdots,Z_t^{m,m})}
 ,\mathbb{P}_{(\widetilde{Z}_t^{1,1},\cdots,\widetilde{Z}_t^{m,m})})dt
& \leq \frac{C\sqrt{mT}}{n}.
\end{align*}
The triangle inequality gives the explicit transfer bound. Under either set of
composite regularity assumptions, \(\varepsilon_{m,n}^Y+\varepsilon_{m,n}^Z\leq C_m/n\),
which proves \eqref{5mainestimatetotal} for fixed \(m\).
The displayed dependence does not by itself give a result for growing \(m\):
such a conclusion also requires quantitative control of the forward composite bounds in \(m\).
\end{proof}

The following example   shows that  the strong  \(\mathcal{W}_2\)-error  is of 
order \(1/\sqrt{n}\), while
 the weak error has the optimal order \(n^{-1}\)  for both the \(Y\)-  and \(Z\)-components.
 Moreover,
 it also shows that the weak framework genuinely extends beyond the setting of \autoref{sec4}, 
 since the diffusion coefficient 
 \(\sigma\) is nonconstant,  time-dependent, and the first-order degeneracy condition need not hold.

\begin{example}\label{nondege}\sl 
		 Let \(d=k=1\), \(b=f= 0\), \(\tau>0.\)
	Let \((X_0^{i,n})_{i=1}^n\) be i.i.d. with distribution \(  \mathcal{N}(0,\tau^2)\), independent of the Brownian motions \((W^i)_{i=1}^n\).  Let \(\sigma \in C^1([0,T])\) be  nonconstant.  We
	consider the following \(n\)-particle system:
	\begin{align*}
		dX_t^{i,n}= 
		\sigma(t) dW_t^i,\q\ 0\leq t\leq T.
	\end{align*}
	The corresponding McKean--Vlasov equation  is 
	\begin{align*}
		dX_t=\sigma(t) dW_t,\q\ 0\leq 
		t\leq T. 
	\end{align*}
	Set
	\begin{align*}
	q_t=\int_t^T \sigma(s)^2ds,\q\ 
	v_t=\tau^2+\int_0^t \sigma(s)^2ds,\q\ 
s(\mu) =\int_{\mathbb{R}} \sin v \mu (dv).
	\end{align*}
	Let \(h(x,\mu)=\cos x s(\mu)\).
 Then the corresponding  decoupling field is 
 \begin{align*}
 U(t,x,\mu)=e^{-q_t}
 h(x,\mu)=e^{-q_t}\cos x s(\mu) \triangleq G(t,x,s(\mu)).
 \end{align*}
 A direct calculation gives
 \begin{align*}
  \partial_t U 
  +\frac{\sigma(t)^2}{2}D_x^2 U
  +\frac{\sigma(t)^2}{2}\int_{\mathbb{R}} D_v D_\mu U(t,x,\mu)(v)\mu (dv)=0,\q\ 
  U(T,x,\mu)=h(x,\mu).
 \end{align*}
Since
\(G(t,x,a)= e^{-q_t} \cos x a  ,\) 
\(D_a G(t,x,s(\theta_t))=e^{-q_t} \cos x  \not\equiv 0,\) hence the first-order degeneracy condition in \autoref{sec4} is not satisfied.  The standing assumptions and 
\autoref{5forsde}, \autoref{asp5.2} and \autoref{phim} are satisfied for every fixed \(m\).  
All positive-order spatial and measure derivatives of \(U\) and
\(H_Z=-\sigma(t)e^{-q_t}\sin x\,s(\mu)\) are bounded and Lipschitz,
uniformly in time. 
Repeated differentiation of the product-measure
integrals therefore verifies  \autoref{phim} for every fixed \(m\).
Since \(\sigma\in C^1([0,T])\),  \autoref{fandsong1} also holds;
 \autoref{fandsong2m} follows from
\(\mathcal M_3\subset\mathcal M_2\). 
Thus the example satisfies both
sets of sufficient conditions.
Since \(f=0\), \(Y_t^{i,n}=\mathbb{E}\left[ h(X_T^{i,n},\theta_T^n)
 \big| \mathscr{F}^n_t\right]\). 
The particle   solutions are therefore given by
\begin{align*}
	 Y_t^{i,n}
	& =\frac{e^{-2q_t}}{n}
	 \cos X_t^{i,n} \sin X_t^{i,n}
	 +\frac{ e^{-q_t} }{n}
	 	\cos X_t^{i,n} \sum_{j\neq i} \sin X_t^{j,n},\\
	 	Z_t^{i,i,n}
	 &	=\frac{\sigma(t)}{n} 
	 	\Big(  e^{-2q_t}
	 	\cos (2X_t^{i,n})-
	  e^{-q_t} 
	 	\sin X_t^{i,n} \sum_{j\neq i}^{} \sin X_t^{j,n} \Big),\\
	 	Z_t^{i,j,n}
	 &	=\frac{\sigma(t)}{n}
	 	e^{-q_t} 
	 	\cos X_t^{i,n} 	\cos X_t^{j,n},\q\ 
	 	i\neq j.
\end{align*}
The discrepancy from the empirical field is explicit:
\begin{align*}
 Y_t^{i,n}-\widetilde Y_t^i
 &=\frac{e^{-2q_t}-e^{-q_t}}{n}\cos X_t^{i,n}\sin X_t^{i,n},\\
 Z_t^{i,i,n}-\widetilde Z_t^{i,i}
 &=\frac{\sigma(t)(e^{-2q_t}-e^{-q_t})}{n}\cos(2X_t^{i,n}).
\end{align*}
The off-diagonal integrands agree with those of the empirical field.
Thus this example also exhibits the order-\(n^{-1}\) correction controlled by the consistency estimate.
Since \(\theta_t=\mathcal{N}(0,v_t)\) is symmetric,
\(h(X_t,\theta_t)=\cos X_t \mathbb{E}\left[\sin X_t\right]=0,\) 
\(Y_t=U(t,X_t,\theta_t)=0,\) \(Z_t=0.\)
Since the limiting law is a Dirac mass, the strong Wasserstein distances satisfy 
 \begin{align*}
  \mathcal{W}^2_2 (\mathbb{P}_{Y_T^{i,n}},\delta_0)
  =\mathbb{E}\left[|Y_T^{i,n}|^2\right]=
  \frac{n-1}{4n^2}
  (1-e^{-4v_T })
  +\frac{1}{8n^2}(1-e^{-8v_T}).
 \end{align*}
 Hence we obtain
\begin{align*}
	\mathcal{W}_{2,\|\cdot\|_{\infty}} (\mathbb{P}_{Y^{i,n}},\delta_0)\geq 
		\mathcal{W}_{2 } (\mathbb{P}_{Y_T^{i,n}},\delta_0)
		=\Bigg( \frac{(1-e^{-4v_T})}{4n} 
		+O(\frac{1}{n^2})\Bigg)^{\frac{1}{2}}.
\end{align*}
Moreover, Doob's inequality gives 
\begin{align*}
	\mathcal{W}_{2,\|\cdot\|_{\infty}} (\mathbb{P}_{Y^{i,n}},\delta_0)
	\leq
	\Bigg( 4\mathbb{E}\left[ |Y_T^{i,n}|^2 \right] \Bigg)^{\frac{1}{2}}
	=\Bigg( \frac{(1-e^{-4v_T})}{n} 
	+O(\frac{1}{n^2})\Bigg)^{\frac{1}{2}}.
\end{align*}
By the same calculation,
\begin{align*}
	 \mathbb{E}\left[|Z_t^{i,i,n}|^2\right]=
	 \frac{\sigma(t)^2}{n^2}
	 \Bigg( \frac{e^{-4  q_t}}{2} (1+e^{-8v_t })
	 +(n-1) e^{-2  q_t} \Big( \frac{1-e^{-2v_t }}{2} \Big)^2
	 \Bigg),
\end{align*}
which yields that 
\begin{align*}
 	\mathcal{W}_{2, L^2} (\mathbb{P}_{Z^{i,i,n}},\delta_0)=
 	\Bigg( \mathbb{E} \int_0^T |Z_t^{i,i,n}|^2 dt \Bigg)^{\frac{1}{2}}
 =	\Bigg( \frac{\widetilde{C}_Z  }{n} 
 	+O(\frac{1}{n^2})\Bigg)^{\frac{1}{2}},
\end{align*}
where \(\widetilde{C}_Z=\int_0^T\sigma(t)^2e^{-2 q_t} \Big( \frac{1-e^{-2v_t }}{2} \Big)^2  dt>0\) because \(\tau>0\) and \( \sigma\not\equiv 0.\)
Let  \(\varphi_Y(y)=\frac{\cos y}{\|\cos  \|_{ 3,1}}\in \mathcal{F}_{3,1}\), \(\psi_Z(z)=\frac{\cos z}{\|\cos  \|_{ 3,1}}\in \mathcal{G}_{3,1}\).   Taylor expansion gives 
\begin{align*}
 \Big|1-\cos x-\frac{x^2}{2} \Big|\leq
 \frac{x^4}{24}.
\end{align*}
The  standard fourth-moment estimate for sums of independent, bounded, centered variables \(\sin X_t^{j,n}\) shows that 
\begin{align*}
\mathbb{E}\left[ |Y_T^{i,n}|^4\right]
+\sup\limits_{t\in [0,T] }\mathbb{E}\left[ |Z_t^{i,i,n}|^4\right]
\leq 
\frac{C}{n^2}.
\end{align*}
Consequently, there exists some
    \(n_0\in \mathbb{N}\) such that, for  every  \(n\geq n_0,\)  
 \begin{align*}
 	d_{3,1}^Y(\mathbb{P}_{Y_T^{i,n}},\delta_0)& \geq
 	 \Big| \mathbb{E}\left[\varphi_Y (Y_T^{i,n})\right]-\mathbb{E}\left[\varphi_Y (0)\right]\Big|
 \geq \frac{1}{
 	 \|\cos \|_{ 3,1}} \Bigg(1-\mathbb{E}\left[\cos Y_T^{i,n}\right]\Bigg)
 	 \geq \frac{c_Y}{n},\\
 	 \int_0^T 
 	 d_{3,1}^Z(\mathbb{P}_{Z_t^{i,i,n}},\delta_0)dt & \geq
 	 \int_0^T  \Big| \mathbb{E}\left[\psi_Z (Z_t^{i,i,n})\right]-\mathbb{E}\left[\psi_Z (0)\right]\Big| dt
 	  \geq \frac{1}{
 	 	\|\cos \|_{ 3,1}} \int_0^T \Bigg(1-\mathbb{E}\left[\cos Z_t^{i,i,n}\right]\Bigg) dt
 	 	\geq \frac{c_Z}{n}.
 \end{align*}
The coordinate embeddings preserve the test-function norm. Define \(\varphi_Y^m (y_1,\cdots,y_m)=
 \frac{\cos y_1}{\|\cos   \|_{ 3,1}}\in \mathcal{F}_{3,1}^m\), \(\psi_Z^m (z_1,\cdots,z_m)=
 \frac{\cos z_1}{\|\cos   \|_{ 3,1}}\in \mathcal{G}_{3,1}^m\).
 Then,
   for fixed \(m\in \mathbb{N}\) and  \(n \geq \max\{m, n_0\},\) we have
 \begin{align*}
  d_{3,1}^{Y,m}(\mathbb{P}_{(Y_T^{1,n},\cdots,Y_T^{m,n})},
  \delta_0^{\otimes m})
 & \geq d_{3,1}^Y(\mathbb{P}_{Y_T^{1,n}},\delta_0)\geq
  \frac{c_Y}{n},\\
 \int_0^T   d_{3,1}^{Z,m}(\mathbb{P}_{(Z_t^{1,1,n},\cdots,Z_t^{m,m,n})},
  \delta_0^{\otimes m})dt
 & \geq \int_0^T d_{3,1}^Z(\mathbb{P}_{Z_t^{1,1,n}},\delta_0)dt\geq
  \frac{c_Z}{n}.
 \end{align*}
  Combining this lower bound  with the upper bound of \eqref{5mainestimatetotal} in \autoref{5maintotal},
 \begin{align*}
\sup\limits_{t\in [0,T]} d_{3,1}^{Y,m}(\mathbb{P}_{(Y_t^{1,n},\cdots,Y_t^{m,n})},
\delta_0^{\otimes m})&\asymp \frac{1}{n} \q\ \hbox{and} \q\ 
 \int_0^T   d_{3,1}^{Z,m}(\mathbb{P}_{(Z_t^{1,1,n},\cdots,Z_t^{m,m,n})},
\delta_0^{\otimes m})dt \asymp \frac{1}{n}.
 \end{align*}
 The comparison constants may depend on \(m, T, \tau\) and \(\sigma\),
 but are independent of \(n\). Thus the exponent in \(n\) is optimal for each fixed \(m\), separately for both components; no optimal dependence on \(m\) is claimed here.
\end{example}

\appendix 
	\section{Proof of \autoref{timeinhomogeneous}}\label{proofoflemma}
	\begin{proof}
		Let \(X^{s,\xi}\) solve \eqref{flowsde}, with 
		\(\mathbb{P}_\xi =\mu\), and set 
		\(\mu_r^{s,\mu} \triangleq \mathbf{P}_{s,r}\mu\).
 We also introduce the decoupled flow
 \begin{align*}
 X_r^{s,x,\mu}
 =x+ \int_s^r b(q, X_q^{s,x,\mu}, \mu_q^{s,\mu}) dq
 +\int_s^r \sigma (q, X_q^{s,x,\mu}, \mu_q^{s,\mu}) dW_q,\q\ 
 s\leq r\leq T.
 \end{align*}
 Spatial and Lions derivatives below are taken with respect to the deterministic parameters \(x\) and \(\mu\), respectively.
We adapt the variational argument of
\cite[Section 3]{chassagen_22_aap}, in particular the higher-order induction in \cite[Theorem 3.4 and the proof of Theorem 2.18]{chassagen_22_aap}.
 Time is retained as a
deterministic parameter throughout the coefficient evaluations.
For example, the spatial Jacobian
 \(J_t\triangleq D_x X_t^{s,x,\mu}\) satisfies
 \begin{align*}
 J_t=I_d+\int_s^t D_x b(u, X_u^{s,x,\mu}, \mathbf{P}_{s,u}\mu)J_udu
 +
 \sum_{j=1}^{d}\int_s^t   D_x \sigma_{\cdot j}(u,X_u^{s,x,\mu},\mathbf{P}_{s,u}\mu)
 J_udW_u^j,\q\ 0\leq s\leq t\leq 
 T.
 \end{align*}
 Since \(b, \sigma\in \mathcal{M}_3\),  by the Burkholder--Davis--Gundy inequality, and Gronwall's lemma,  together with uniform
 boundedness of coefficient derivatives,  we have
 \begin{align*}
  \mathbb{E}\left[ 
  \sup\limits_{t\in [s,T]} |J_t|^r\right]\leq C_r,\q\ \forall r\geq 2.
 \end{align*}
For completeness, the first measure variation can be written explicitly.
With \(s,\mu\) fixed, abbreviate
\(X_u^x=X_u^{s,x,\mu}\), \(J_u^v=D_xX_u^{s,v,\mu}\), and
\(M_u^x(v)=D_\mu X_u^{s,x,\mu}(v)\).
Let primes denote an independent copy of the driving Brownian motion and
an independent initial variable \(\xi'\) with law \(\mu\).
For \(a=b\) or a column of \(\sigma\), set
\begin{align*}
 \mathcal A_{a,u}^x(v)
 =D_xa(u,X_u^x,\mu_u^{s,\mu})M_u^x(v)
 +\mathbb E'\left[D_\mu a(u,X_u^x,\mu_u^{s,\mu})(X_u^{\prime v})J_u^{\prime v}\right]
 +\mathbb E'\left[D_\mu a(u,X_u^x,\mu_u^{s,\mu})(X_u^{\prime\xi'})
 M_u^{\prime\xi'}(v)\right].
\end{align*}
In \(M_u^{\prime\xi'}(v)\), the spatial argument is held fixed during measure differentiation
and is then evaluated at \(\xi'\).
The Lions chain rule gives the linear equation
\begin{align*}
 M_t^x(v)=\int_s^t\mathcal A_{b,u}^x(v)du
 +\sum_{j=1}^d\int_s^t\mathcal A_{\sigma_{\cdot j},u}^x(v)dW_u^j,
 \q\ M_s^x(v)=0.
\end{align*}
The bounded coefficient derivatives and the estimates for \(J\), followed by
the same inequalities, imply
\(\sup\limits_{s,x,\mu,v}\mathbb E \left[
\sup\limits_{t\in[s,T]}|M_t^x(v)|^r \right]
\leq C_r\).
At
 each higher differentiation order, the highest-order variations satisfy a 
 linear system, including terms involving expectations over independent copies. 
 The remaining terms are finite sums of products of coefficient derivatives and lower-order variations. 
 Induction on the total differentiation order, using H\"older's inequality and the preceding estimates,  yields the corresponding moment bounds.
 The global Lipschitz assumptions yield the difference estimates and the derivatives are justified by the associated difference-quotient estimates.
 No differentiation of the coefficients
 with respect to time is needed in this argument.
 More precisely, for every \(r\geq 2\) and every multi-index
 \((w,\alpha)\) satisfying \(0<w+|\alpha|\leq 3,\)
 we get
  \begin{align*}
& \mathbb{E}
\left[ \sup\limits_{t\in [s,T]}  |\mathcal{D}^{w,\alpha}X_t^{s,x,\mu}(\textbf{v})|^r \right] \leq C_r,\\
&
\mathbb{E}\left[ \sup\limits_{t\in [s,T]}  
|\mathcal{D}^{w,\alpha}X_t^{s,x,\mu}(\textbf{v})
-\mathcal{D}^{w,\alpha} X_t^{s,x',\mu'}(\textbf{v}')|^r\right]
\leq C_r \Big( |x-x'| +\mathcal{W}_2(\mu,\mu')
+  \sum_{j=1}^w|v_j-v_j'|\Big)^r,
 \end{align*}
 where \(C_r>0\) is  independent of \(s,x,x',\mu,\mu'\) and the auxiliary variables. 
 These estimates require only \(\mu, \mu' \in \mathcal{P}_2\).
 Indeed, the law flow satisfies
 \begin{align*}
 \sup\limits_{t\in [s,T]}
 \mathcal{W}_2(\mu_t^{s,\mu},\mu_t^{s,\mu'})
 \leq C \mathcal{W}_2 ( \mu,\mu').
 \end{align*}
  The synchronously coupled decoupled flows satisfy
  \begin{align*}
 &\mathbb{E}\left[
 \sup\limits_{t\in [s,T]} |X_t^{s,x,\mu}-X_t^{s,x',\mu'}|^r \right]
 \leq C_r\Big(    |x-x'|+ \mathcal{W}_2(\mu,\mu')
 \Big)^r.
  \end{align*}
  The higher moment estimates above concern the variations, whose equations involve bounded coefficient derivatives, and do not impose higher moments on the initial law.
   Stability of the variational equations, together with the joint continuity of the coefficient derivatives, yields joint continuity of the flow derivatives in \(L^r\) with respect to their
     time, spatial, measure, and auxiliary variables.

 Then, applying the  Lions chain rule   to
 \( 
 V_t^\Phi(s,\mu)=\Phi(\mathbf{P}_{s,t}\mu )
\) and using the 
   preceding estimates, we derive that
   \begin{align*}
     \sup_{t\in [s,T]}
    \max_{0< w+|\alpha|\leq3}
( 
    \|\mathcal D^{w,\alpha}
    V_t^\Phi (s,\cdot)  \|_\infty
    +
    [\mathcal D^{w,\alpha}
    V_t^\Phi (s,\cdot) ]_{Lip}
 )
    \leq C\|\Phi\|_{\mathcal M_3}.
   \end{align*}
   The same representations imply joint continuity of these derivatives, including at \(s=t.\)

 For the zeroth-order term, the
 boundedness of \(D_\mu\Phi\) implies that \(\Phi\) is globally
 Lipschitz with respect to \(\mathcal W_2\). 
 Standard second-moment
 estimates for the McKean--Vlasov SDEs give
 \begin{align*}
  |V_t^{\Phi}(s,\delta_0)|
  \leq |\Phi(\delta_0)|+C \|\Phi\|_{\mathcal{M}_3} \mathcal{W}_2(\mathbf{P}_{s,t}\delta_0, \delta_0)
  \leq C\|\Phi\|_{\mathcal{M}_3}, \q\  0\leq s\leq t\leq 
  T.
 \end{align*} 

Now we prove differentiability in the initial time.
 For a sufficiently regular measure functional \(F\), let
 \begin{align*}
 	\mathcal L_t F(\mu)
 	=&\int_{\mathbb R^d}
 	\langle D_\mu F(\mu)(v),b(t,v,\mu)\rangle \mu(dv)
 	+\frac12\int_{\mathbb R^d}
 	\operatorname{Tr}\!\left[
 	A(t,v,\mu)D_vD_\mu F(\mu)(v)
 	\right] \mu(dv),
 \end{align*}
 where \(A=\sigma\sigma^\top\).
 For fixed  \(0\leq s <t\leq T\) and \(0<h< t-s\),
 the nonlinear flow property gives
 \(
 V_t^\Phi(s,\mu)
 =V_t^\Phi(s+h,\mathbf P_{s,s+h}\mu).
\)
 Applying the Lions chain rule to the fixed functional
 \(F_h(\mu) \triangleq V_t^\Phi(s+h,\mu)\)
 along the deterministic law flow
 \(r\longmapsto\mathbf P_{s,r}\mu\), we get
 \begin{align*}
   V_t^\Phi(s,\mu)-V_t^\Phi(s+h,\mu)
  =\int_s^{s+h}
  \mathcal L_r F_h(\mathbf P_{s,r}\mu) dr.
 \end{align*}
 Letting \(h\downarrow 0,\) using the joint continuity established above, the at most quadratic growth of the integrands, and
the \(\mathcal W_2\)-continuity of the law flow (which gives uniform
integrability of its quadratic tails),
we obtain
 \begin{align*} 
 	\lim_{h\downarrow0}
 	\frac{V_t^\Phi(s+h,\mu)-V_t^\Phi(s,\mu)}{h}
 	=-\mathcal L_sV_t^\Phi(s,\cdot)(\mu).
 \end{align*}
 For \(0<s<t,\)  set \(F_0=V_t^\Phi(s,\cdot)\).  Applying the same argument on \([s-h,s]\),   we have
  \begin{align*}
 	V_t^\Phi(s-h,\mu)-V_t^\Phi(s,\mu)
 	=\int_{s-h}^{s}
 	\mathcal L_r F_0(\mathbf P_{s-h,r}\mu) dr.
 \end{align*}
 Hence the left derivative agrees with the right derivative at 
 interior times:
 \begin{align*}
  	\lim_{h\downarrow0}
  \frac{V_t^\Phi(s,\mu)-V_t^\Phi(s-h,\mu)}{h}
  =-\mathcal L_sV_t^\Phi(s,\cdot)(\mu).
 \end{align*}
 The right-hand side is continuous in \(s\), with the corresponding one-sided values at the endpoints.
 Consequently,  \(V_t^\Phi\) is continuously differentiable in \(s\) and
 satisfies the asserted backward equation.
 Finally, the terminal condition follows from
 \(\mathbf P_{t,t}\mu=\mu\).
 Combining the preceding regularity and zero-order estimates,
 we obtain
\begin{align*}
 \sup\limits_{t\in [0,T]}
 \|V_t^\Phi\|_{\mathcal{M}_3([0,t]\times \mathcal{P}_2(\mathbb{R}^d))}
 \leq C\|\Phi\|_{\mathcal{M}_3(\mathcal{P}_2(\mathbb{R}^d))}.
\end{align*}
	\end{proof}
	
 	\section*{Acknowledgments}
Part of this work was done while Shuxian Gao was visiting the Beijing International Center for Mathematical Research (BICMR), Peking University. The authors thank  BICMR and the group of Zhenfu Wang for their hospitality, and Zhenfu Wang for helpful discussions and valuable feedback.

\paragraph{Statement on Al use.}
This manuscript was written entirely by the authors. AI-assisted tools were used only for language polishing and grammatical correction, and not for generating scientific content.

\end{document}